\documentclass{imanum_mod}
\usepackage{graphicx}
\usepackage{tikz}
\usepackage{algorithm}
\usepackage{algpseudocode}

\usepackage{color}
\definecolor{mygray}{RGB}{47,79,79}


\newcommand\blfootnote[1]{%
  \begingroup
  \renewcommand\thefootnote{}\footnote{#1}%
  \addtocounter{footnote}{-1}%
  \endgroup
}

\graphicspath{ {images/} }
\newcommand{\bolds}[1]{\boldsymbol{#1}}
\newcommand{\sref}[2]{\hyperref[#2]{#1 \ref*{#2}}}
\newcommand{\R}{\mathbf{R}}  


\begin{document}

\title{Finite element approximation of nonlocal fracture models}

\shorttitle{Finite element approximation of nonlocal fracture models}

\author{%
{\sc
Prashant K. Jha\thanks{Email: prashant.j16o@gmail.com, Orcid: https://orcid.org/0000-0003-2158-364X}
and
Robert Lipton\thanks{Corresponding author. Email: lipton@math.lsu.edu, Orcid: https://orcid.org/0000-0002-1382-3204}\\[2pt]
Department of Mathematics, Louisiana State University,\\
Baton Rouge, LA, 70803
}
}
\shortauthorlist{Prashant K. Jha and Robert Lipton}

\maketitle

\blfootnote{This material is based upon work supported by the U. S. Army Research Laboratory and the U. S. Army Research Office under contract/grant number W911NF1610456.}

\begin{abstract}
{We consider nonlocal nonlinear potentials and estimate the rate of convergence of time stepping schemes to the peridynamic equation of motion. 
We begin by establishing the existence of $H^2$ solutions over any finite time interval. Here spatial approximation by finite element interpolations are considered.
The energy stability of the associated semi-discrete time stepping  scheme is established and the approximation of strong and weak formulations of the evolution using  FE interpolations of $H^2$ solutions are investigated. 
The strong and weak form of approximations are shown to converge to the actual solution in the mean square norm at the rate $C_t\Delta t +C_s h^2/\epsilon^2$ where $h$ is the mesh size, $\epsilon$ is the size of nonlocal interaction and $\Delta t$ is the time step. The constants $C_t$ and $C_s$ are independent of $\Delta t$, and  $h$. 
In the absence of nonlinearity a CFL like condition for the energy stability of the central difference time discretization scheme is developed.}
{nonlocal fracture models; peridynamics; cohesive dynamics; numerical analysis; finite element method.}
\end{abstract}


\section{Introduction}
In this article we consider non local models for crack propagation in solids. We focus on the peridynamics formulation introduced in \citet{CMPer-Silling}. The basic idea is to redefine the strain in terms of the difference quotients of the displacement field and allow for nonlocal interaction within some finite horizon. The formulation has a natural length scale given by the size of the horizon. The force at any given material point is computed by considering the deformation of all neighboring material points within a radius given by the size of horizon. 
Here we examine the finite element approximation to the nonlinear nonlocal model proposed and examined in \citet{CMPer-Lipton3,CMPer-Lipton}.  One of the important points of this model is the fact that as the size of the horizon goes to zero, i.e. when we tend to the local limit, the model behaves as if it is an elastic model away from the crack set and has bounded Griffith fracture energy \citet{CMPer-Lipton3,CMPer-Lipton}. Therefore, in the limit, the model not only converges to the local elastic models in regions with small deformation but also has finite Griffith fracture energy associated with a sharp fracture set. The nonlinear potential can be calibrated so that it gives the same fracture toughness as in Linear Elastic Fracture Mechanics models. The force potential used in this model is a smooth version of the prototypical micro elastic brittle model introduced in  \citet{CMPer-Silling}. Further, the slope of the nonlinear force for small strain (as we show in Section \ref{s:intro model}) is controlled by elastic constant of the material. 

The linear peridynamic model converges to the linear elastic model, when the nonlocal length scale goes to zero, this is seen in the convergence of the integral operators to the differential operators, see \citet{CMPer-Emmrich,CMPer-Silling4,AksoyluUnlu}. More fundamentally the convergence of linear peridynamics to local elasticity in the sense of solution operators is shown in \citet{CMPer-Mengesha2}. Crack propagation using the peridynamics model has been considered extensively, see \citet{CMPer-Silling5,BobaruHu,HaBobaru,CMPer-Agwai,CMPer-Ghajari,Diehl,CMPer-Lipton2}. Theoretical analysis of peridynamics models are carried out in \citet{CMPer-Lipton3,CMPer-Lipton,CMPer-Silling7,CMPer-Du3,CMPer-Du5,CMPer-Emmrich,AksoyluParks,CMPer-Dayal2} and \citet{CMPer-Richard,CMPer-Littlewood,CMPer-Du6,CMPer-Gerstle}. Dynamic phase transformations described by  peridynamic theory has been proposed and analyzed in \citet{CMPer-Dayal}. 

In this work, we consider the finite element interpolation given by linear conforming elements. The potential considered in this work is of double well type. One well corresponds to linear elastic deformation and is zero for zero strain and other corresponds to material rupture and has a well at infinity. We consider discrete time stepping methods both in weak and strong form. To proceed we first show the existence of evolutions in $H^2(D;\R^d)\cap H^1_0(D;\R^d)$, see Theorem \ref{thm:existence over finite time domain}. Here the $H^2$ norm for the evolution can become large as the length scale of nonlocal interaction becomes small. 
In Theorem \ref{thm:higher regularity}, we show that the peridynamic evolution can have higher regularity in time when body forces satisfy additional regularity conditions in time. 
We next address the stability of semi-discrete approximation for the nonlinear model and show that the evolution is energetically stable, see Theorem \ref{thm:stab nonlin semi}. We then consider the linearization of the nonlinear model and provide  a stability analysis of the fully discrete approximation. Here we folliw \citet{CMPer-Karaa} and \citet{CMPer-Guan} to obtain a stability condition on $\Delta t$ for the linearized model, see Theorem \ref{thm:cfl condition l}. The rationale is that for small strains the material behaves like a linear elastic material. Recent related work for linear local elastic models establish stability of the general Newmark time discretization \citet{CMPer-Baker,CMPer-Grote,CMPer-Karaa}. This behavior is shown to persist for elastic nonlocal models in \citet{CMPer-Guan}, using techniques in \citet{CMPer-Baker,CMPer-Grote,CMPer-Karaa}.   For the nonlinear model we establish a Lax Richtmyer stability, see Lemma \ref{lemma: stability} and \eqref{LaxRichtmyer}.

The main contribution of this paper is the approximation of $H^2(D;\R^d) \cap H^1_0(D;\R^d)$ peridynamic evolutions by linear conforming elements. The time stepping approximation using finite element interpolation for both strong and weak forms of the evolution problem is shown to converge to the actual solution in the mean square norm at the rate $C_t\Delta t +C_s h^2/\epsilon^2$ where $h$ is the mesh size, $\epsilon$ is the size of nonlocal interaction and $\Delta t$ is the time step, see Theorem \ref{thm:convergence}. The constant $C_t$ is independent of $\Delta t$ and $h$ and depends on the $L^2$ norm of the time derivatives of the exact solution. The constant $C_s$ is also independent of $\Delta t$, and  $h$ and depends on the $H^2$ norm of the exact solution.  We assess the impact of the constants appearing in Theorem \ref{thm:convergence}  for evolution times seen in fracture experiments in  section \ref{s:discussion}. 
Related recent work, \citet{CMPer-JhaLipton}, addresses time stepping methods for the strong form approximation in H\"older space $C^{0,\gamma}(D;\R^d)$ with H\"older exponent $\gamma \in (0,1]$. 
A convergence rate of $C_t\Delta t + C_sh^\gamma/\epsilon^2$ is demonstrated. The constant $C_t$ depends on $L^2$ norm of the time derivative of exact solution and the constant $C_s$ depends on the H\"older norm of the exact solution and the Lipschitz constant of peridynamic force.

The organization of article is as follows: In Section \ref{s:intro model}, we introduce the class of nonlocal nonlinear potentials used in this article. We establish the existence of $H^2(D;\R^d) \cap H^1_0(D;\R^d)$ solutions in Section \ref{s:exist}. In Section \ref{s:fe discr} we describe the finite element approximation and establish energy stability for the semi-discrete in time approximation. In Section \ref{s:central} we consider the central in time discretization and describe the convergence rate of the FEM approximation to the true solution. We establish a CFL like criterion on the time step for stability of the linearized model. 
The proof of claims are given in Section \ref{s:proofs}. We discuss the convergence rate and the associated a-priori error over time scales seen in fracture experiments, see Section \ref{s:discussion}. We provide concluding remarks addressing the existence of asymptotically compatible schemes in the context of fracture, see Section \ref{s:concl}.

\section{Class of bond-based nonlinear potentials}\label{s:intro model}
In this section, we present the nonlinear nonlocal model. Let $D\subset \R^d$, for $d= 2,3$ be the material domain with characteristic length-scale of unity. {To fix ideas $D$ is assumed to be an open set with $C^1$ boundary.} Every material point $x\in D$ interacts nonlocally with all other material points inside a horizon of length $\epsilon\in (0,1)$. Let $H_{\epsilon}(x)$ be the ball of radius $\epsilon$ centered at $x$ containing all points $y$ that interact with $x$. After deformation the material point $x$ assumes position $z = x + u(x)$. In this treatment we assume infinitesimal displacements $u(x)$ so the deformed configuration is the same as the reference configuration and the linearized strain is given by
\begin{align*}
S=S(y,x;u) &= \dfrac{u(y) - u(x)}{|y - x|} \cdot \dfrac{y - x}{|y - x|}.
\end{align*}

We let $t$ denote time and the displacement field $u(t,x)$ evolves according to Newton's second law
\begin{align}\label{eq:per equation}
\rho \partial^2_{tt} u(t,x) &= -\nabla PD^{\epsilon}(u(t))(x) + b(t,x)
\end{align}
for all $x \in D$. Here the body force applied to the domain $D$ can evolve with time and is denoted by $b(t,x)$.  Without loss of generality, we will assume $\rho = 1$. The peridynamic force denoted by $-\nabla PD^\epsilon(u)(x)$ is given by summing up all forces acting on $x$,
\begin{align*}
-\nabla PD^{\epsilon}(u)(x) = \dfrac{2}{\epsilon^d \omega_d} \int_{H_{\epsilon}(x)} \partial_S W^{\epsilon}(S,y - x) \dfrac{y - x}{|y - x|} dy,
\end{align*}
where $\partial_S W^{\epsilon}$ is the force exerted on $x$ by $y$ and is given by the derivative of the nonlocal two point potential $W^{\epsilon}(S,y - x)$ with respect to the strain and $\omega_d$ is volume of unit ball in dimension $d$. 

Let $\partial D$ be the boundary of material domain $D$. 
The Dirichlet boundary condition on $u$ is
\begin{align}\label{eq:per bc}
u(t,x) = 0 \qquad \forall x \in \partial D, \forall t\in [0,T]
\end{align}
and the initial condition is 
\begin{align}\label{eq:per initialvalues}
u(0,x) = u_0(x) \qquad \text{and} \qquad \partial_t u(0,x)=v_0(x).
\end{align}
The initial data and solution $u(t,x)$ are extended by $0$ outside $D$. We remark that traction boundary conditions can be introduced in the nonlocal context by prescribing a body force along a boundary layer of width $\epsilon$ and allowing  the displacement to be free there.

Define the energy $\mathcal{E}^\epsilon(u)(t)$ to be the sum of kinetic and potential energy and is given by
\begin{align}\label{eq:def energy}
\mathcal{E}^\epsilon(u)(t) &= \frac{1}{2} ||\dot{u}(t)||_{L^2} + PD^\epsilon(u(t)).
\end{align}
where potential energy $PD^\epsilon$ is given by
\begin{align*}
PD^{\epsilon}(u) &= \int_D \left[ \dfrac{1}{\epsilon^d \omega_d} \int_{H_\epsilon(x)} |y-x| W^{\epsilon}(S(u), y-x) dy \right] dx.
\end{align*}

\subsection{Nonlocal potential}

We now describe the nonlocal potential. 
We consider potentials $W^{\epsilon}$ of the form
\begin{align}\label{eq:per pot}
W^{\epsilon}(S, y - x) &= \omega(x)\omega(y)\dfrac{J^{\epsilon}(|y - x|)}{\epsilon |y-x|} f(|y - x| S^2),
\end{align}
where $f: \R^{+} \to \R$ is assumed to be positive, smooth, and concave with following properties
\begin{align}\label{eq:per asymptote}
\lim_{r\to 0^+} \dfrac{f(r)}{r} = f'(0), \qquad \lim_{r\to \infty} f(r) = f_{\infty} < \infty .
\end{align}
The peridynamic force $-\nabla PD^\epsilon$ is written as
\begin{align}\label{eq:per force}
-\nabla PD^{\epsilon}(u)(x) = \dfrac{4}{\epsilon^{d+1} \omega_d} \int_{H_{\epsilon}(x)} \omega(x) \omega(y) J^\epsilon(|y - x|) f'(|y - x| S(u)^2) S(u) e_{y - x} dy,
\end{align}
where we used the notation $S(u) = S(y,x;u)$ and $e_{y-x} = \frac{y-x}{|y-x|}$.

The function $J^{\epsilon}(|y - x|)$ models the influence of separation between points $y$ and $x$. Here $J^{\epsilon}(|y-x|) = J(|y-x|/\epsilon)$ can be piecewise smooth  and we define $J$ to be zero outside the ball $\{\xi:\,|\xi|<1\}=H_1(0)$ and $0\leq J(|\xi|) \leq M$ for all $\xi \in H_1(0)$. 

The boundary function $\omega(x)$ is nonnegative and takes the value $1$ for points $x$ inside $D$ of distance $\epsilon$ away from from the boundary $\partial D$. Inside the boundary layer of width $\epsilon$ the function  $\omega(x)$ smoothly decreases from $1$ to $0$ taking the value $0$ on $\partial D$. 

In the sequel we will set
\begin{align}
\bar{\omega}_\xi(x) = \omega(x) \omega(x+ \epsilon \xi)
\end{align}
and we assume
\begin{align*}
|\nabla \bar{\omega}_\xi| \leq C_{\omega_1} < \infty \quad \text{and} \quad |\nabla^2 \bar{\omega}_\xi| \leq C_{\omega_2} < \infty .
\end{align*}

The potential described in (\ref{eq:per pot}) gives the convex-concave dependence, see Fig \ref{fig:per pot}, of $W(S,y - x)$ on the strain $S$ for fixed $y - x$. Initially the deformation is elastic for small strains and then softens as the strain becomes larger. The critical strain where the force between $x$ and $y$ begins to soften is given by $S_c(y, x) := \bar{r}/\sqrt{|y - x|}$ and the force decreases monotonically for
\begin{align}
|S(y, x;u)| > S_c.
\end{align}
Here $\bar{r}$ is the inflection point of $r \mapsto f(r^2)$ and is the root of following equation
\begin{align}
f'({r}^2) + 2{r}^2 f''({r}^2) = 0.
\end{align}

\begin{figure}
\centering
\includegraphics[scale=0.3]{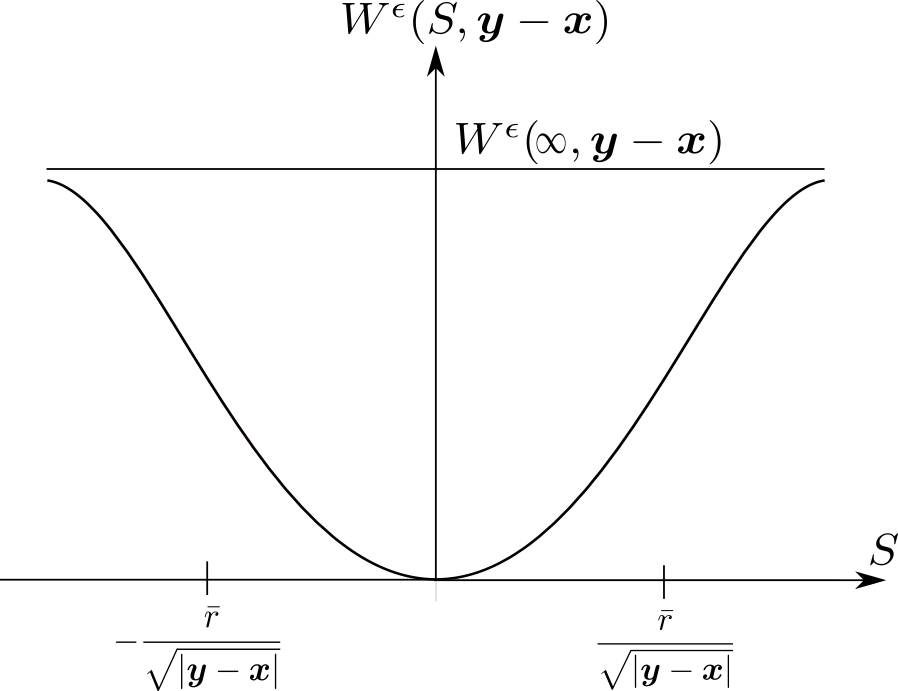}
\caption{Two-point potential $W^\epsilon(S,y - x)$ as a function of strain $S$ for fixed $y - x$.}
 \label{fig:per pot}
\end{figure} 

\begin{figure}
\centering
\includegraphics[scale=0.3]{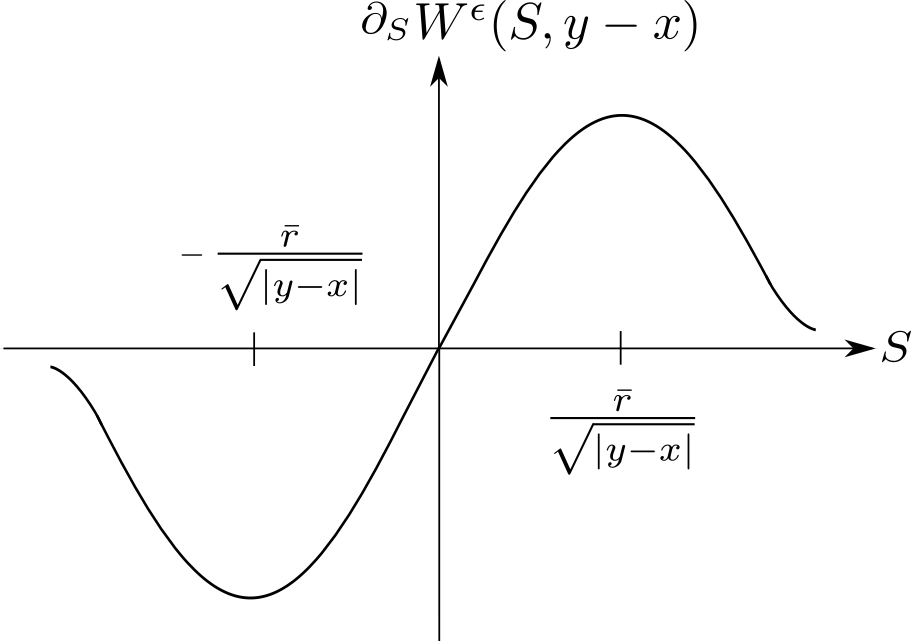}
\caption{Nonlocal force $\partial_S W^\epsilon(S,y - x)$ as a function of strain $S$ for fixed $y - x$. Second derivative of $W^\epsilon(S,y-x)$ is zero at $\pm \bar{r}/\sqrt{|y -x|}$.}
 \label{fig:first der per pot}
\end{figure}  

In [Theorem 5.2, \citet{CMPer-Lipton}], it is shown that in the limit $\epsilon \to 0$, the peridynamics solution has bounded  linear elastic fracture energy, provided the initial data $(u_0,v_0)$ has bounded linear elastic fracture energy and $u_0$ is bounded. The elastic constant (Lam\'e constant $\lambda$ and $\mu$) and energy release rate of the limiting energy is given by
\begin{align*}
\lambda = \mu = C_d f'(0) M_d, \quad \mathcal{G}_c = \dfrac{2 \omega_{d-1}}{\omega_d} f_\infty M_d
\end{align*}
where $ M_d = \int_0^1 J(r) r^d dr$ and $f_\infty = \lim_{r\to \infty} f(r)$. $C_d= 2/3, 1/4, 1/5$ for $d=1,2,3$ respectively and $\omega_n = 1, 2, \pi, 4\pi/3$ for $n=0,1,2,3$. Therefore, $f'(0)$ and $f_\infty$ are determined by the Lam\'e constant $\lambda$ and fracture toughness $\mathcal{G}_c$.

\subsection{Weak formulation}
We now give the weak formulation of the evolution.  Here the notation $||\cdot||$ denotes the  $L^2$ norm on $D$, $||\cdot||_\infty$ is used for the $L^\infty$ norm on $D$, and $||\cdot||_n$ for Sobolev $H^n$ norms on $D$ for $n=1,2$. We denote the dot product in $L^2(D;\R^d)$ by $(\cdot, \cdot)$. For $n=1,2$ we recall that the Sobolev space $H^n_0(D;\R^d)$ is given by the closure in the $H^n$ norm of the  functions that are infinitely differentiable with compact support in $D$. Multiplying (\ref{eq:per equation}) by a test function in $\tilde{u}$ in $H^1_0(D;\R^d)$  we get
\begin{align*}
(\ddot{u}(t), \tilde{u}) &= (-\nabla PD^\epsilon(u(t)), \tilde{u}) + (b(t), \tilde{u}).
\end{align*}
An integration by parts easily shows for all $u,v$ in $H^1_0(D;\R^d)$ that
\begin{align*}
(-\nabla PD^\epsilon(u), v) &= -a^\epsilon(u,v),
\end{align*}
where
\begin{align}
a^\epsilon(u, v) &= \dfrac{2}{\epsilon^{d+1} \omega_d} \int_D \int_{H_\epsilon(x)} \omega(x) \omega(y) J^\epsilon(|y - x|) f'(|y - x| S(u)^2) |y - x| S(u) S(v) dy dx. \label{eq:operator a}
\end{align}
The same identities hold for all functions in $L^2(D;\R^d)$.

It is readily verified that the peridynamic force and energy are bounded for all functions in $L^2(D;\R^d)$. Here the bound on the force follows from the Lipschitz property of the force in $L^2(D;\R^d)$, see, \eqref{eq:lipschitz const ltwo}.  The peridynamic force is also bounded for functions $u$ in $H^1(D;\R^d)$. This again follows  from the Lipschitz property of the force in $H^1(D;\R^d)$ using arguments established in section \ref{s:proofs}.  The boundedness of the energy $PD^\epsilon(u)$  in both $L^2(D;\R^d)$ and $H^1(D;\R^d)$ follows from the boundedness of the bond potential  energy ${W}^\epsilon(S(u),y-x)$ used in the definition of $PD^\epsilon(u)$, see \eqref{eq:per pot}. More generally this also shows that $PD^\epsilon(u)<\infty$ for $u\in L^1(D;\R^d)$.

Finally, the weak form of the evolution in terms of the operator $a^\epsilon$ becomes
\begin{align}
(\ddot{u}(t), \tilde{u}) + a^\epsilon(u(t), \tilde{u}) &= (b(t), \tilde{u}). \label{eq:weak form}
\end{align}

Using definition of $a^\epsilon$ in (\ref{eq:operator a}), one easily sees that
\begin{align}
\dfrac{d}{dt} \mathcal{E}^\epsilon(u)(t) = (\ddot{u}(t), \dot{u}(t)) + a^\epsilon(u(t), \dot{u}(t))\label{eq:energy relat}.
\end{align}


\section{Existence of solutions in $H^2 \cap H^1_0$}\label{s:exist}
We consider function space $W$ given by
\begin{align}
W := H^2(D;\R^d) \cap H^1_0(D;\R^d) = \{ v \in H^2(D;\R^d): \gamma v = 0, \: \text{on } \partial D\},
\end{align}
where $\gamma v$ is the trace of function $v$ on the boundary $\partial D$. Norm on $W$ is $H^2(D;\R^d)$ norm. In this section, we show that for suitable initial condition and body force solutions of peridynamic equation exist in $W$. We will assume that $u\in W$ is extended by zero outside $D$.

We note the following Sobolev embedding properties of $H^2(D;\R^d)$ when $D$ is a $C^1$ domain. 
\begin{itemize}
\item From Theorem 2.72 of \citet{MAFa-Demengel}, there exists a constant $C_{e_1}$ independent of $u \in H^2(D;\R^d)$ such that 
\begin{align}
||u||_{L^\infty(D;\R^d)} \leq C_{e_1} ||u||_{H^2(D;\R^d)} . \label{eq:sob embedd 1}
\end{align}

\item Further application of standard embedding theorems (e.g., Theorem 2.72 of \citet{MAFa-Demengel}), shows there exists a constant $C_{e_2}$ independent of $u$ such that
\begin{align}
||\nabla u||_{L^q(D;\R^{d\times d})} \leq C_{e_2} ||\nabla u||_{H^1(D;\R^{d\times d})} \leq C_{e_2} ||u||_{H^2(D;\R^d)}, \label{eq:sob embedd 2}
\end{align}
for any $q$ such that $2\leq q< \infty$ when $d=2$ and $2\leq q \leq 6$ when $d=3$.
\end{itemize}

In what follows, we  first state the Lipschitz continuity property for  $-\nabla PD^\epsilon(u)$. We then state the existence theorem for solutions over finite time intervals. These are proved in section \ref{s:proofs}.  We now write the peridynamic evolution equation as an equivalent first order system with $y_1(t)=u(t)$ and $y_2(t)=v(t)$ with $v(t)=\dot{u}(t)$. Let $y = (y_1, y_2)^T$ where $y_1,y_2 \in W$ and let $F^{\epsilon}(y,t) = (F^{\epsilon}_1(y,t), F^{\epsilon}_2(y,t))^T$ such that
\begin{align}
F^\epsilon_1(y,t) &:= y_2, \label{eq:per first order eqn 1} \\
F^\epsilon_2(y, t) &:= -\nabla PD^\epsilon(y_1) + b(t). \label{eq:per first order eqn 2}
\end{align}
The initial boundary value is equivalent to the initial boundary value problem for the first order system given by
\begin{align}\label{eq:per first order}
\dot{y}(t) = F^{\epsilon}(y,t),
\end{align}
with initial condition given by $y(0) = (u_0, v_0)^T \in W\times W$. Recall that we denote the norm on $H^2(D;\R^d)$ as $||\cdot||_2$.

\begin{theorem}\label{thm:lip in H2}
\textbf{Lipschitz continuity of the peridynamic force}\\
For any $u,v\in W$, we have
\begin{align}
&||-\nabla PD^\epsilon(u) - (-\nabla PD^\epsilon(v))||_2 \notag \\
&\leq \dfrac{\bar{L}_1 + \bar{L}_2(||u||_2 + ||v||_2) + \bar{L}_3(||u||_2 + ||v||_2)^2}{\epsilon^3} ||u-v||_2\label{eq:lip of f in H2}
\end{align}
where constants $\bar{L}_1, \bar{L}_2, \bar{L}_3$ are independent of $\epsilon$, $u$, and $v$, and are defined in (\ref{eq:lipsch const def H2}). Also, for $u\in W$, we have
\begin{align}
||-\nabla PD^\epsilon(u)||_2 &\leq \dfrac{\bar{L}_4 ||u||_2 + \bar{L}_5 ||u||^2_2}{\epsilon^{5/2}} \label{eq:bound on f in H2},
\end{align}
where constants are independent of $\epsilon$ and $u$ and are defined in (\ref{eq:lipsch const def H2 2}).
\end{theorem}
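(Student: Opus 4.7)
My plan is to rescale the integral defining the peridynamic force by substituting $\xi=(y-x)/\epsilon$, giving
\begin{equation*}
-\nabla PD^\epsilon(u)(x) = \frac{4}{\epsilon\,\omega_d}\int_{H_1(0)}\bar{\omega}_\xi(x)\,J(|\xi|)\,G\!\left(S(u;x,\xi)\right)\,e_\xi\,d\xi,
\end{equation*}
with $G(s):=f'(\epsilon|\xi|s^2)\,s$ and $S(u;x,\xi):=(u(x+\epsilon\xi)-u(x))\cdot e_\xi/(\epsilon|\xi|)$. By Minkowski's inequality in the $\xi$-integral, bounding the $H^2(D;\R^d)$ norm of the force reduces to controlling $\partial_x^\alpha$ of the $\xi$-pointwise integrand in $L^2(D;\R^d)$ for $|\alpha|\leq 2$ and then integrating against $J(|\xi|)$ over $H_1(0)$.

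I would compute $\nabla_x$ and $\nabla_x^2$ of the integrand via the product and chain rules. Each derivative landing on $\bar{\omega}_\xi$ is dominated by $C_{\omega_1}$ or $C_{\omega_2}$; the functions $G'(s)$ and $G''(s)$ are uniformly bounded because the concavity of $f$ and the asymptote (\ref{eq:per asymptote}) force $f',f'',f'''$ as well as the products $\epsilon|\xi|s^2 f''(\epsilon|\xi|s^2)$ and $\epsilon^2|\xi|^2 s^4 f'''(\epsilon|\xi|s^2)$ to be bounded. The first $x$-derivative of $S$ produces $(\nabla u(x+\epsilon\xi)-\nabla u(x))\cdot e_\xi/(\epsilon|\xi|)$, which I rewrite as $\int_0^1 \nabla^2 u(x+t\epsilon\xi)\xi\cdot e_\xi\,dt/|\xi|$; this cancels the $\epsilon$ in the denominator and produces an $L^2(D)$ bound of order $||u||_2$ via a translation change of variables (with $u$ extended by zero outside $D$). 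The second $x$-derivative of $S$ admits no such cancellation because $\nabla^3 u$ is unavailable, so one settles for the translation bound $||\nabla^2 u(\cdot+\epsilon\xi)-\nabla^2 u(\cdot)||\leq 2\,||u||_2$ combined with the prefactor $1/(\epsilon|\xi|)$. For the Lipschitz increment I use
\begin{equation*}
G(S(u))-G(S(v)) = \Big(\int_0^1 G'(\tau S(u)+(1-\tau)S(v))\,d\tau\Big)\,S(u-v),
\end{equation*}
and analogous mean-value expansions when $\nabla_x$ or $\nabla_x^2$ falls on $G(S(u))-G(S(v))$. Sobolev embeddings (\ref{eq:sob embedd 1})--(\ref{eq:sob embedd 2}) yield $||S(w)||_\infty\lesssim||\nabla w||_\infty\lesssim ||w||_2$, so every occurrence of $S(u)$ or $S(v)$ inside $G'$ or $G''$ contributes one power of $||u||_2+||v||_2$, producing the polynomial $\bar L_1+\bar L_2(||u||_2+||v||_2)+\bar L_3(||u||_2+||v||_2)^2$ in (\ref{eq:lip of f in H2}).

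The $\epsilon^{-3}$ scaling in (\ref{eq:lip of f in H2}) then comes from combining the $\epsilon^{-1}$ prefactor, the one uncancellable $\epsilon^{-1}$ loss from the second $x$-derivative landing on $S(u-v)$, and the integrability of $J(|\xi|)|\xi|^{-1}$ on $H_1(0)$ for $d\geq 2$, which contributes an $\epsilon$-independent constant. For (\ref{eq:bound on f in H2}) no finite difference in $u-v$ is taken, so Cauchy--Schwarz in the $\xi$-integral can be invoked at the critical step to pair the singular $|\xi|^{-1}$ against the $L^2(D)$-translate of $\nabla^2 u$ itself rather than its non-existent difference quotient, saving half a power of $\epsilon$ and delivering $\epsilon^{-5/2}$. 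I expect the chief technical obstacle to be the second-order term $\bar{\omega}_\xi\,\bar{G}'\,\nabla_x^2 S$: lacking $H^3$ regularity, the $\epsilon^{-1}$ loss there is intrinsic and drives the overall exponent, so one must verify that the polynomial in $||u||_2$ and $||v||_2$ absorbs all cross-terms cleanly, and select the Sobolev exponent $q$ in (\ref{eq:sob embedd 2}) so that Hölder pairings of the form $||\nabla w_1||_{L^q}||\nabla w_2||_{L^{q'}}$ close to an $L^2(D)$ bound without any dimension-dependent singularity.
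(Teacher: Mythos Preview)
Your overall strategy—rescale to the unit ball, differentiate the integrand up to second order in $x$, and control each piece via Sobolev embeddings and H\"older pairings—is exactly what the paper does. There is, however, a genuine error: the chain $||S(w)||_\infty \lesssim ||\nabla w||_\infty \lesssim ||w||_2$ fails in dimensions $d=2,3$ because $H^2(D)$ does \emph{not} embed into $W^{1,\infty}(D)$. Inequality (\ref{eq:sob embedd 2}) gives $||\nabla w||_{L^q} \lesssim ||w||_2$ only for finite $q$ (any $q<\infty$ when $d=2$, $q\leq 6$ when $d=3$), never for $q=\infty$. You therefore cannot extract $S(u)$ or $S(v)$ in sup norm and convert it directly to a factor of $||u||_2+||v||_2$ as you describe.

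The paper circumvents this in two distinct ways. For factors of the undifferentiated strain such as $|S_\xi(u-v)|$ it applies the crude bound $|S_\xi(u-v)| \leq 2||u-v||_\infty/(\epsilon|\xi|)$ and then the valid embedding (\ref{eq:sob embedd 1}) on $u-v$ itself, not on its gradient; the extra $\epsilon^{-1}$ this introduces is precisely what drives the overall $\epsilon^{-3}$ in (\ref{eq:lip of f in H2}). For products like $|S_\xi(\nabla u)|^2$ or $|S_\xi(\nabla u)|\,|S_\xi(\nabla(u-v))|$ that must sit in $L^2(D)$, the paper uses H\"older with exponent $4$ and the $L^4$ instance of (\ref{eq:sob embedd 2}); see the estimates of $I_5$ and $I_6$ in Section~\ref{s:proofs}. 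You allude to this $L^q$/H\"older mechanism in your final sentence, so you may already have the right fix in mind, but it must \emph{replace} the $L^\infty$ claim on $\nabla w$, not coexist with it. Note also that the polynomial in $||u||_2+||v||_2$ does not arise from arguments sitting inside the uniformly bounded functions $G'$, $G''$; it arises from the outer chain-rule factors $\nabla_x S(u)$ and $\nabla_x S(u)\otimes\nabla_x S(u)$, whose $L^2$ norms contribute $||u||_2$ and $||u||_2^2$ via the $L^4$ embedding. Your fundamental-theorem-of-calculus rewrite of $\nabla_x S$ is not in the paper but is harmless; it does not touch the worst term $G'(S)\,\nabla_x^2 S$, so the final exponent is unchanged.
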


In Theorem 6.1 of \cite{CMPer-Lipton}, the Lipschitz property of the peridynamic force is shown in $L^2(D;\R^d)$ and is given by
\begin{align}\label{eq:lipschitz const ltwo}
\Vert{-\nabla PD^\epsilon(u) - (-\nabla PD^\epsilon(v))}\Vert &\leq \dfrac{L_1}{\epsilon^2}\Vert{u - v}\Vert \,\,\,\,\, \forall
u ,v \in L^2(D;\R^d),
\end{align}
with $L_1$ given by \eqref{L1def}.

%

We state the theorem which shows the existence and uniqueness of solution in any given finite time interval $I_0 = (-T,T)$. 

\begin{theorem}\label{thm:existence over finite time domain} 
\textbf{Existence and uniqueness of solutions over finite time intervals}\\
For any initial condition $x_0\in X = W \times W$, time interval $I_0=(-T,T)$, and right hand side $b(t)$  continuous in time for $t\in I_0$ such that $b(t)$ satisfies $\sup_{t\in I_0} ||b(t)||_2<\infty$, there is a unique solution $y(t)\in C^1(I_0;X)$ of
\begin{equation*}
y(t)=x_0+\int_0^tF^\epsilon(y(\tau),\tau)\,d\tau,
\label{10}
\end{equation*}
or equivalently
\begin{equation*}
y'(t)=F^\epsilon(y(t),t),\hbox{with    $y(0)=x_0$},
\label{11}
\end{equation*}
where $y(t)$ and $y'(t)$ are Lipschitz continuous in time for $t\in I_0$.
\end{theorem}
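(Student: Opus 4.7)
The plan is to recast the initial value problem \eqref{eq:per first order} as an abstract Cauchy problem $\dot{y}=F^\epsilon(y,t)$ in the Banach space $X=W\times W$ (equipped with the sum-of-$H^2$-norms) and apply a Picard--Lindel\"of / Banach fixed-point argument, followed by a continuation step. The Lipschitz estimate \eqref{eq:lip of f in H2} immediately implies that $F^\epsilon(\cdot,t):X\to X$ is Lipschitz on every bounded ball of $X$, uniformly in $t\in I_0$ (using also the given $\sup_{t\in I_0}\|b(t)\|_2<\infty$), with a Lipschitz constant $L(R)$ that grows polynomially in the radius $R$. The growth bound \eqref{eq:bound on f in H2} similarly shows that $\|F^\epsilon(y,t)\|_X$ is bounded by a quadratic polynomial in $\|y\|_X$ plus the body force. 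These are the standard ingredients for local well-posedness of an ODE in a Banach space.

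Local existence and uniqueness. First I would fix $R>2\|x_0\|_X$ and study the Picard map
\begin{equation*}
\Phi(y)(t)=x_0+\int_0^t F^\epsilon(y(\tau),\tau)\,d\tau
\end{equation*}
on the complete metric space $C([-\delta,\delta];\overline{B}_R(0))\subset C([-\delta,\delta];X)$. Using \eqref{eq:bound on f in H2} and the bound on $\|b(t)\|_2$, one checks that $\Phi$ leaves $\overline{B}_R(0)$ invariant provided $\delta=\delta(R,\epsilon,\sup\|b\|_2)$ is small enough. Using \eqref{eq:lip of f in H2} applied pointwise in $\tau$ and integrating, $\Phi$ satisfies $\|\Phi(y)(t)-\Phi(z)(t)\|_X\le t\,L(R)\,\sup_{\tau\in[0,t]}\|y(\tau)-z(\tau)\|_X$, so after further shrinking $\delta$ (or equivalently reweighting the sup-norm by $e^{-\lambda t}$ with $\lambda$ large) the map is a strict contraction. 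Banach's fixed-point theorem yields a unique $y\in C([-\delta,\delta];X)$ satisfying the integral equation, and the fundamental theorem of calculus together with continuity of $F^\epsilon(y(\cdot),\cdot)$ upgrades this to $y\in C^1([-\delta,\delta];X)$ solving $y'=F^\epsilon(y,t)$ with $y(0)=x_0$. Lipschitz-in-time of $y$ and $y'$ follows from the $L^\infty$-in-time bounds on $F^\epsilon(y(\cdot),\cdot)$ together with the equation itself.

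Extension to the full interval and the main obstacle. Iterating the local construction from the endpoints of each existence interval produces a maximal existence interval $(t_-,t_+)\subseteq(-T,T)$; by the standard continuation dichotomy, either $t_+=T$ or $\limsup_{t\uparrow t_+}\|y(t)\|_X=\infty$ (and similarly for $t_-$). The heart of the proof is ruling out the second alternative by an a priori bound on $\|y(t)\|_X$ valid on any subinterval of $I_0$ where the solution exists. This is the step I expect to be the main obstacle: plugging \eqref{eq:bound on f in H2} into the integral identity gives
\begin{equation*}
\|y(t)\|_X\le\|x_0\|_X+\int_0^t\Bigl(\|y_2(\tau)\|_2+\frac{\bar{L}_4\|y_1(\tau)\|_2+\bar{L}_5\|y_1(\tau)\|_2^2}{\epsilon^{5/2}}+\|b(\tau)\|_2\Bigr)d\tau,
\end{equation*}
and the quadratic term would naively permit finite-time blow-up. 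To close this, I would first use the energy identity \eqref{eq:energy relat} together with the uniform boundedness of $PD^\epsilon$ (inherited from $f_\infty<\infty$) and the $L^2$ Lipschitz bound \eqref{eq:lipschitz const ltwo} to control $\|\dot u(t)\|$ and hence $\|u(t)\|$ in $L^2$ on $[0,T]$ by Gronwall's lemma. Then I would differentiate the peridynamic force formula \eqref{eq:per force} and use the Sobolev embeddings \eqref{eq:sob embedd 1}--\eqref{eq:sob embedd 2} to re-express the $H^2$ norm of $-\nabla PD^\epsilon(u)$ in a form that is linear, rather than quadratic, in $\|u\|_2$ once the lower-order $L^2$ quantities are treated as bounded coefficients. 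With such a refined estimate, a linear Gronwall inequality yields $\sup_{t\in I_0}\|y(t)\|_X<\infty$, forcing $t_\pm=\pm T$ and completing the proof.
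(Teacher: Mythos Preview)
Your local-existence step via the contraction mapping principle is essentially the same as the paper's. The continuation step is where the two diverge, and where your proposal has a gap.

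The paper does not attempt an a priori bound on $\|y(t)\|_X$ at all. Instead it argues that the local existence time admits a positive lower bound that is \emph{independent of the initial data}: working in nested balls $B(x_0,r^2)\subset B(0,R)$ and using the growth bound \eqref{eq:bound on f in H2}, one obtains a local time $T'$ comparable to
\[
\theta(R)=\frac{(R-\|x_0\|_X)^2}{\bar L_4 R/\epsilon^{5/2}+\bar L_5 R^2/\epsilon^{5/2}+R+\bar b}.
\]
Because the denominator is exactly quadratic in $R$, $\theta(R)\to\epsilon^{5/2}/\bar L_5>0$ as $R\to\infty$. One can therefore restart the local argument with a time step bounded away from zero, no matter how large the current data, and march across any finite interval $(-T,T)$ without ever establishing a global estimate.

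Your proposed route---ruling out blow-up via a refined linear-in-$\|u\|_2$ bound and Gronwall---does not close as stated. The quadratic contribution to $\|{-}\nabla PD^\epsilon(u)\|_2$ is the term $h_2(u)$ in \eqref{eq:nabla 2 p u}, built from $S_\xi(\nabla u)\otimes S_\xi(\nabla u)$ and controlled in $L^2$ by $\|\nabla u\|_{L^4}^2$. This is not a ``lower-order $L^2$ quantity'': the energy identity \eqref{eq:energy relat} together with \eqref{eq:lipschitz const ltwo} bounds only $\|u(t)\|$ and $\|\dot u(t)\|$, never $\nabla u$. Even if you first bootstrap an $H^1$ bound (which \emph{is} available, since $\|\nabla P(u)\|\le C(1+\|\nabla u\|)$ closes linearly), Gagliardo--Nirenberg in $d=3$ yields only $\|\nabla u\|_{L^4}^2\le C\|\nabla u\|^{1/2}\|\nabla^2 u\|^{3/2}$, leaving a power $3/2$ of $\|u\|_2$ in the differential inequality for the $H^2$ norm; a linear Gronwall then cannot exclude finite-time blow-up. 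Your sketch supplies no mechanism to bring this exponent down to $1$, so the continuation argument as written is incomplete.
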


It is found that the peridynamic evolutions have higher regularity in time for body forces that are differentiable in time.
We now state the higher temporal regularity for peridynamic evolutions.

\begin{theorem}\label{thm:higher regularity} 
\textbf{Higher regularity}\\
Suppose the initial data and righthand side $b(t)$ satisfy the hypothesis of Theorem \ref{thm:existence over finite time domain} and suppose further that  $\dot{b}(t)$ exists and is continuous in time for $t\in I_0$ and $\sup_{t\in I_0} ||\dot{b}(t)||_2 < \infty$. Then 
$u \in C^3(I_0;W)$ and 
\begin{align}\label{eq:per equation vsubtt est}
|| \partial^3_{ttt} u(t,x)||_2 &\leq  \dfrac{C(1 + \sup_{s \in I_0} ||u(s)||_2 + \sup_{s\in I_0} ||u(s)||^2_2)}{\epsilon^3} \sup_{s\in I_0} ||\partial_t u(s)||_2 + ||\dot{b}(t,x)||_2,
\end{align}
where $C$ is a positive constant independent of $u$.

\end{theorem}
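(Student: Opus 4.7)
The plan is to upgrade the $C^2$ regularity already guaranteed by Theorem \ref{thm:existence over finite time domain} to $C^3$ by differentiating the equation of motion once more in time and controlling the resulting expression through a linearization of the peridynamic force in $W$.

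First, from Theorem \ref{thm:existence over finite time domain} applied to the first order system \eqref{eq:per first order}, we have $y = (u, \dot u)^T \in C^1(I_0; W \times W)$, hence $u \in C^2(I_0; W)$ and the identity $\ddot u(t) = -\nabla PD^\epsilon(u(t)) + b(t)$ holds in $W$ for every $t \in I_0$. To show $u \in C^3(I_0; W)$ with the claimed bound, it suffices to prove that the map $t \mapsto -\nabla PD^\epsilon(u(t))$ is in $C^1(I_0; W)$ and to compute its time derivative explicitly; the hypothesis $\dot b \in C(I_0; W)$ then handles the body force term.

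Second, I would study the G\^ateaux/Fr\'echet derivative of $u \mapsto -\nabla PD^\epsilon(u)$ at a fixed base point $u \in W$. Differentiating \eqref{eq:per force} formally under the integral sign along a direction $w \in W$ gives, via the chain rule applied to $S \mapsto f'(|y-x|S^2)\,S$,
\begin{align*}
[D(-\nabla PD^\epsilon)(u)](w)(x) = \dfrac{4}{\epsilon^{d+1}\omega_d} \int_{H_\epsilon(x)} \bar\omega_\xi(x) J^\epsilon(|y-x|) g(|y-x|S(u)^2)\, S(w)\, e_{y-x}\, dy,
\end{align*}
where $g(r) := f'(r) + 2 r f''(r)$ is bounded on $\mathbb{R}^+$ thanks to the smoothness and asymptotic conditions \eqref{eq:per asymptote}. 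The key analytic step is an $H^2$ estimate of exactly the same shape as \eqref{eq:lip of f in H2},
\begin{align*}
\| [D(-\nabla PD^\epsilon)(u)](w)\|_2 \leq \dfrac{C(1 + \|u\|_2 + \|u\|_2^2)}{\epsilon^3}\, \|w\|_2,
\end{align*}
obtained by repeating the proof of Theorem \ref{thm:lip in H2} line by line. The polynomial dependence on $\|u\|_2$ arises from controlling first and second weak derivatives of compositions such as $x \mapsto g(|y-x|S(u)^2)$ through the Sobolev embeddings \eqref{eq:sob embedd 1}--\eqref{eq:sob embedd 2} and the bounded derivatives of $g$, while the factor $1/\epsilon^3$ is inherited from the same cancellations in $\epsilon$ that produced the Lipschitz constant of Theorem \ref{thm:lip in H2}.

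Third, combining the above estimate with a mean value identity
\begin{align*}
-\nabla PD^\epsilon(u(t+h)) + \nabla PD^\epsilon(u(t)) = \int_0^1 [D(-\nabla PD^\epsilon)(u(t) + \theta(u(t+h) - u(t)))](u(t+h) - u(t))\, d\theta
\end{align*}
and the fact that $u \in C^1(I_0; W)$, one passes to the limit $h \to 0$ in $W$ to conclude
\begin{align*}
\partial_t\bigl[-\nabla PD^\epsilon(u(t))\bigr] = [D(-\nabla PD^\epsilon)(u(t))](\dot u(t)),
\end{align*}
with continuity in $t$ following from the continuity of the base point and of $\dot u(t)$ in $W$. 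Hence $\partial^3_{ttt} u(t) = [D(-\nabla PD^\epsilon)(u(t))](\dot u(t)) + \dot b(t)$ in $W$, and inserting the linearization bound together with $\sup_{s \in I_0} \|u(s)\|_2, \|\dot u(s)\|_2 < \infty$ yields \eqref{eq:per equation vsubtt est}. The principal obstacle is the $H^2$ estimate for the linearization: tracking the interplay between the boundary cutoff $\bar\omega_\xi$, the two-derivative combination $g = f' + 2rf''$, and the Sobolev embeddings so as to recover exactly the polynomial profile $1 + \|u\|_2 + \|u\|_2^2$ and the correct negative power $1/\epsilon^3$ is where the calibration is delicate, but once established it plugs directly into the chain rule argument to finish the theorem.
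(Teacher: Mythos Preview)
Your proposal is correct and matches the paper's approach: the linearization you call $[D(-\nabla PD^\epsilon)(u)](w)$ is precisely the operator the paper denotes $Q(w;u)$, and your key $H^2$ bound on it is exactly the content of Theorem~\ref{thm:Q lip in H2}, established there by the same reuse of the estimates behind Theorem~\ref{thm:lip in H2}. The only cosmetic difference is packaging: the paper casts the final step as a new linear initial value problem for $v=\dot u$ and invokes a Banach fixed point argument (Theorem~\ref{thm:soln of v equation}), whereas you reach the same conclusion directly via the chain rule and a mean-value identity.
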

The proofs of Theorems \ref{thm:lip in H2}, \ref{thm:existence over finite time domain}, and \ref{thm:higher regularity} are given in Section \ref{s:proofs}. We now discuss the finite element approximation of the peridynamic evolution.

\section{Finite element interpolation}\label{s:fe discr}
Let $V_h$ be given by linear continuous interpolations
over  tetrahedral or triangular elements $\mathcal{T}_h$ where $h$ denotes the size of finite element mesh. Here we  assume the elements are conforming and the finite element mesh is shape regular and $V_h\subset H^1_0(D;\R^d)$.

For a continuous function $u$ on $\bar{D}$, $\mathcal{I}_h(u)$ is the continuous piecewise linear interpolant on $\mathcal{T}_h$. It is given by 
\begin{align}
\mathcal{I}_h(u)\bigg\vert_{T} = \mathcal{I}_T( u) \qquad \forall T\in \mathcal{T}_h,
\end{align}
where $\mathcal{I}_T( u)$ is the local interpolant defined over finite element $T$ and is given by
\begin{align}
\mathcal{I}_T( u) = \sum_{i=1}^n u(x_i)\phi_i.
\end{align}
Here $n$ is the number of vertices in an element $T$, $x_i$ is the position of vertex $i$, and $\phi_i$ is the linear interpolant associated to vertex $i$. 

Application of Theorem 4.4.20 and remark 4.4.27 in \cite{MANa-Susanne} gives 
\begin{align}\label{eq:interpolation error}
&&|| u - \mathcal{I}_h(u) || \leq c h^2 || u ||_2, \hbox{         } \qquad \forall u \in W.
\end{align}

Let $r_h(u)$ denote the projection of $u\in W$ on $V_h$. For the $L^2$ norm it is defined as
\begin{align}
||u - r_h(u)|| &= \inf_{\tilde{u}\in V_h} ||u - \tilde{u}|| . \label{eq:proj u general}
\end{align}
and satisfies 
\begin{align}\label{eq:proj orthognl prop}
(r_h(u), \tilde{u}) = (u, \tilde{u}), \qquad \forall \tilde{u} \in V_h.
\end{align}

Since $\mathcal{I}_h(u) \in V_h$, and  \eqref{eq:interpolation error} we see that
\begin{align}
&&||u - r_h(u)|| \leq c h^2 || u ||_2,\hbox{     } \qquad \forall u \in W. \label{eq:proj error u general}
\end{align}

\subsection{Semi-discrete approximation}
Let $u_h(t) \in V_h$ be the approximation of $u(t)$ which satisfies the following 
\begin{align}
(\ddot{u}_h(t), \tilde{u} ) + a^\epsilon(u_h(t), \tilde{u}) &= ( b(t), \tilde{u} ), \qquad \forall \tilde{u} \in V_h, \label{eq:fe weak}.
\end{align}

We now show that the semi-discrete approximation is stable, i.e. the energy at time $t$ is bounded by the initial energy and work done by the body force. 

\begin{theorem}\label{thm:stab nonlin semi}
\textbf{Energy stability of the semi-discrete approximation}\\
The semi-discrete scheme is energetically stable and the energy $\mathcal{E}^\epsilon(u_h)(t)$, defined in (\ref{eq:def energy}), satisfies the following bound
\begin{align*}
\mathcal{E}^\epsilon(u_h)(t) &\leq \left[ \sqrt{\mathcal{E}^\epsilon(u_h)(0)} + \int_0^t ||b(\tau)|| d\tau \right]^2.
\end{align*}
\end{theorem}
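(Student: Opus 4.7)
The plan is to test the semi-discrete weak form with $\tilde{u}=\dot{u}_h(t)\in V_h$, recognize the resulting identity as an exact energy balance, and then apply a Cauchy--Schwarz estimate followed by a Gronwall-style argument on $\sqrt{\mathcal{E}^\epsilon(u_h)(t)}$.

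First I would note that the semi-discrete problem \eqref{eq:fe weak} is, once a basis of $V_h$ is chosen, a finite-dimensional second-order ODE system for the nodal coefficients with smooth (indeed $C^\infty$ in its first argument and continuous in time) right-hand side, so $u_h\in C^2([0,T];V_h)$ and in particular $\dot{u}_h(t)\in V_h$ for every $t$. This justifies the substitution $\tilde{u}=\dot{u}_h(t)$ in \eqref{eq:fe weak}, giving
\begin{align*}
(\ddot{u}_h(t),\dot{u}_h(t))+a^\epsilon(u_h(t),\dot{u}_h(t))=(b(t),\dot{u}_h(t)).
\end{align*}

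Next I would invoke the identity \eqref{eq:energy relat}, which is the key step that converts the bilinear combination on the left into a total time derivative of the energy. Together with Cauchy--Schwarz on the right this yields
\begin{align*}
\frac{d}{dt}\mathcal{E}^\epsilon(u_h)(t)=(b(t),\dot{u}_h(t))\leq \|b(t)\|\,\|\dot{u}_h(t)\|.
\end{align*}
Since the kinetic term in \eqref{eq:def energy} is a nonnegative contribution to $\mathcal{E}^\epsilon(u_h)$ and the potential part $PD^\epsilon(u_h)\geq 0$ (because $f\geq 0$, $J\geq 0$, $\omega\geq 0$), one has $\|\dot{u}_h(t)\|\leq\sqrt{2\mathcal{E}^\epsilon(u_h)(t)}$, so
\begin{align*}
\frac{d}{dt}\mathcal{E}^\epsilon(u_h)(t)\leq \sqrt{2}\,\|b(t)\|\sqrt{\mathcal{E}^\epsilon(u_h)(t)}.
\end{align*}

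The final step is the standard trick of dividing by $2\sqrt{\mathcal{E}^\epsilon(u_h)(t)}$ (formally valid when $\mathcal{E}^\epsilon(u_h)>0$; the degenerate case can be handled by adding $\delta>0$ and letting $\delta\downarrow 0$) to obtain
\begin{align*}
\frac{d}{dt}\sqrt{\mathcal{E}^\epsilon(u_h)(t)}\leq \frac{1}{\sqrt{2}}\|b(t)\|\leq \|b(t)\|,
\end{align*}
and then integrating from $0$ to $t$ and squaring gives precisely the claimed bound. The only mildly delicate point is making the division rigorous, which the $\delta$-regularization handles cleanly; the algebraic manipulations themselves are routine once the energy identity \eqref{eq:energy relat} is in hand.
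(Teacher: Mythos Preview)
Your proof is correct and follows essentially the same route as the paper: test \eqref{eq:fe weak} with $\dot u_h(t)$, use the energy identity \eqref{eq:energy relat} together with nonnegativity of $PD^\epsilon$ to bound $\|\dot u_h\|$ by $\sqrt{\mathcal{E}^\epsilon(u_h)}$, and then apply the $\delta$-regularized square-root Gronwall argument. The only cosmetic difference is that the paper uses the slightly looser bound $\|\dot u_h\|\leq 2\sqrt{\mathcal{E}^\epsilon(u_h)}$ in place of your $\sqrt{2}\sqrt{\mathcal{E}^\epsilon(u_h)}$, but both lead to $\frac{d}{dt}\sqrt{\mathcal{E}^\epsilon(u_h)}\leq\|b\|$ and hence to the same conclusion.
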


\begin{proof}
Letting $\tilde{u} = \dot{u}_h(t)$ in (\ref{eq:fe weak}) and applying the identity (\ref{eq:energy relat}), we get
\begin{align*}
\dfrac{d}{dt} \mathcal{E}^\epsilon(u_h)(t) = (b(t), \dot{u}_h(t)) \leq ||b(t)|| \;||\dot{u}_h(t)||.
\end{align*}
We also have 
\begin{align*}
||\dot{u}_h(t)|| \leq 2 \sqrt{\frac{1}{2} ||\dot{u}_h||^2 + PD^\epsilon(u_h(t))} = 2 \sqrt{\mathcal{E}^\epsilon(u_h)(t)}
\end{align*}
where we used the fact that $PD^\epsilon(u)(t)$ is nonnegative and 
\begin{align*}
\dfrac{d}{dt} \mathcal{E}^\epsilon(u_h)(t) &\leq 2 \sqrt{\mathcal{E}^\epsilon(u_h)(t)}\; ||b(t)||.
\end{align*}

Fix $\delta > 0$ and let $A(t) = \mathcal{E}^\epsilon(u_h(t)) + \delta$. Then from the equation above we easily see that
\begin{align*}
\dfrac{d}{dt} A(t) &\leq 2 \sqrt{A(t)} \; ||b(t)|| \quad \Rightarrow \dfrac{1}{2} \dfrac{\frac{d}{dt} A(t)}{\sqrt{A(t)}} \leq ||b(t)||.
\end{align*}
Noting that $\frac{1}{\sqrt{a(t)}}\frac{d a(t)}{dt}= 2\frac{d}{dt} \sqrt{a(t)} $, integrating from $t=0$ to $\tau$ and relabeling $\tau$ as $t$, we get
\begin{align*}
\sqrt{A(t)} &\leq \sqrt{A(0)} + \int_0^t ||b(s)|| ds.
\end{align*}
Letting $\delta \to 0$ and taking the square of both side proves the claim.
\end{proof}

\section{Central difference time discretization}\label{s:central}
For illustration, we consider the central difference scheme and present the convergence rate for the central difference scheme for the fully nonlinear problem. We  point out that the extension of these results to the general Newmark scheme is straight forward. We conclude by considering a linearized peridynamic evolution and demonstrate CFL like conditions for stability of the fully discrete scheme.

Let $\Delta t$ be the time step. The exact solution at $t^k= k\Delta t$ (or time step $k$) is denoted as $(u^k, v^k)$, with $v^k=\partial u^k/\partial t$, and the projection onto $V_h$ at $t^k$ is given by $(r_h(u^k), r_h(v^k))$. The solution of the discrete problem at time step $k$ is denoted as $(u^k_h, v^k_h)$.  

We first describe the discrete approximation of the weak formulation of the peridynamic evolution. The initial data for displacement $u_0$ and velocity $v_0$ are approximated by their projections $r_h(u_0)$ and $r_h(v_0)$. Let $u^0_h = r_h(u_0)$ and $v^0_h = r_h(v_0)$. The discrete weak formulation of the peridynamic evolution is given as follows.  For $k\geq 1$, $(u^k_h, v^k_h)$ satisfies, for all $\tilde{u} \in V_h$,
\begin{align}
\left( \dfrac{u^{k+1}_h - u^k_h}{\Delta t}, \tilde{u} \right) &= (v^{k+1}_h, \tilde{u}), \notag \\
\left( \dfrac{v^{k+1}_h - v^k_h}{\Delta t}, \tilde{u} \right) &= (-\nabla PD^\epsilon(u^k_h), \tilde{u} ) + (b^k_h, \tilde{u}) \label{eq:forward},
\end{align}
where we denote projection of $b(t^k)$, $r_h(b(t^k))$, as $b^k_h$. Combining the two equations delivers the central difference equation for $u^k_h$. We have
\begin{align}\label{eq:central}
\left( \dfrac{u^{k+1}_h - 2 u^k_h + u^{k-1}_h}{\Delta t^2}, \tilde{u} \right) &= (-\nabla PD^\epsilon(u^k_h), \tilde{u} ) + (b^k_h, \tilde{u}), \qquad \forall \tilde{u} \in V_h.
\end{align}
For $k=0$, we have $\forall \tilde{u} \in V_h$
\begin{align}\label{eq:central zero}
\left( \dfrac{u^1_h - u^0_h}{\Delta t^2}, \tilde{u}\right) &= \dfrac{1}{2}(-\nabla PD^\epsilon(u^0_h), \tilde{u})+ \dfrac{1}{\Delta t} (v^0_h, \tilde{u}) + \dfrac{1}{2}(b^0_h, \tilde{u}).
\end{align}


Next, we describe the discrete approximation of the peridynamic evolution written in strong form. Let $u^k_h \in V_h$ be given by
\begin{align}
u_h^k\bigg\vert_{T} = \sum_{i=1}^n u_i^k\phi_i(x),\qquad x\in T, \qquad \forall T\in \mathcal{T}_h.
\end{align}
The discrete solution is represented as follows
\begin{align}\label{eq:central strong}
\dfrac{u^{k+1}_h - 2 u^k_h + u^{k-1}_h}{\Delta t^2}&= -\nabla PD^\epsilon_h(u^k_h) + b^k_h
\end{align}
when $k \geq 1$, and
\begin{align}\label{eq:central zero strong}
\dfrac{u^1_h - u^0_h}{\Delta t^2} &= -\dfrac{1}{2}\nabla PD^\epsilon_h(u^0_h) + \dfrac{1}{\Delta t} v^0_h + \dfrac{1}{2}b^0_h
\end{align}
when $k=0$. Here $-\nabla PD^\epsilon_h(u^k_h)$ and $b^k_h$ are defined as the continuous piecewise linear interpolation of $-\nabla PD^\epsilon(u^k_h)$ and $b^k$, i.e.,
\begin{align*}
-\nabla PD^\epsilon_h(u_h^k)\bigg\vert_{T} &= \sum_{i=1}^n (-\nabla PD^\epsilon(u_h^k)(x_i)) \phi_i(x), \qquad &x\in T, \qquad \forall T\in \mathcal{T}_h, \notag \\
b^k_h\bigg\vert_{T} &= \sum_{i=1}^n b^k(x_i) \phi_i(x),\qquad &x\in T, \qquad \forall T\in \mathcal{T}_h.
\end{align*}

\subsection{Implementation details}
For completeness we describe the implementation of the time stepping method using FEM interpolants  for both strong and weak form descriptions of the evolution.  Let $\bolds{N}$ be the shape tensor then $u^k_h, \tilde{u} \in V_h$ are given by
\begin{align}
u^k_h = \bolds{N} \bolds{U}^k, \qquad \tilde{u}  = \bolds{N} \tilde{\bolds{U}},
\end{align}
where $\bolds{U}^k$ and $\tilde{\bolds{U}}$ are $ Nd$ dimensional vectors, where $N$ is the number of nodal points in the mesh and $d$ is the dimension.  

\paragraph{Weak form}
From (\ref{eq:central}), for all $\tilde{\bolds{U}} \in \R^{Nd}$ with elements of $\tilde{\bolds{U}}$ zero on the boundary, then the following holds for $k\geq 1$
\begin{align}\label{eq:fe equation weak}
\left[\bolds{M}\frac{\bolds{U}^{k+1} - 2\bolds{U}^{k} + \bolds{U}^{k-1}}{\Delta t^2} \right] \cdot \tilde{\bolds{U}} = \bolds{F}^k \cdot \tilde{\bolds{U}} .
\end{align}
Here the mass matrix $\bolds{M}$ and force vector $\bolds{F}^k$ are given by
\begin{align}\label{eq:def M and F}
\bolds{M} &:= \int_D \bolds{N}^T \bolds{N} dx,\notag \\
\bolds{F}^k &:= \bolds{F}^k_{pd}+ \int_D \bolds{N}^T b(t^k,x) dx,
\end{align}
where $\bolds{F}^k_{pd}$ is defined by
\begin{align}\label{eq:def F pd}
\bolds{F}^k_{pd} &:= \int_D \bolds{N}^T (-\nabla PD^\epsilon(u^k_h)(x)) dx.
\end{align}
We remark that a similar equation holds for $k=0$.

At the time step $k$ we must invert ${\bolds{M}}$ to solve for ${\bolds{U}}^{k+1}$ using
\begin{align}\label{eq:weak form final solve}
{\bolds{U}}^{k+1} = \Delta t^2 {\bolds{M}}^{-1} {\bolds{F}}^k + 2 \hat{\bolds{U}}^k - {\bolds{U}}^{k-1}.
\end{align}
As is well known this inversion amounts to an increase of computational complexity associated with discrete approximation of the weak formulation of the evolution. On the other hand the quadrature error in the computation of the force vector $\bolds{F}^k_{pd}$ is reduced when using the weak form. 
\paragraph{Strong form}
In the strong formulation, the following holds for $k\geq 1$
\begin{align}\label{eq:fe equation strong}
\frac{\bolds{U}^{k+1} - 2\bolds{U}^{k} + \bolds{U}^{k-1}}{\Delta t^2} = \bolds{F}^k,
\end{align}
where the force vector is $\bolds{F}^k = (\bolds{F}^k_i)_{1\leq i\leq Nd} \in \R^{Nd}$ and $i^{\text{th}}$ element of $\bolds{F}^k$ is given by
\begin{align}
\bolds{F}^k_i &:= -\nabla PD_h^\epsilon(u^k_h)(x_i) + b(t^k,x_i).
\end{align}
The equation for $k=0$ can be written in a similar fashion. We solve for ${\bolds{U}}^{k+1}$ at the $k^{\text{th}}$ time step using
\begin{align}\label{eq:strong form final solve}
{\bolds{U}}^{k+1} = \Delta t^2 {\bolds{F}}^k + 2 {\bolds{U}}^k - {\bolds{U}}^{k-1}.
\end{align}

In contrast to the weak form approximation, the strong form approximation does not require the mass matrix  or its inverse. This amounts to a reduction in computational complexity when using the strong form approximation.  Additionally the computation of the force $\bolds{F}^k$ is much simpler for the strong form and only requires calculation of the peridynamic force at the mesh nodes. 


We now show that finite element solution of both the strong and weak formulations converge to the exact solution.

\subsection{Convergence of approximation}\label{approxcomvg}
The convergence rates are the same for both weak and strong form time stepping methods. We show how to establish  a uniform bound on the $L^2$ norm of the discretization error for the problem in weak form and prove that approximate solution converges to the exact solution at the rate $C_t\Delta t +C_s h^2/\epsilon^2$ for fixed $\epsilon > 0$. Identical arguments can be made for discretization error using the strong form approximation.  We first compare the exact solution with its projection in $V_h$ and then compare the projection with approximate solution. We further divide the calculation of error between projection and approximate solution in two parts, namely consistency analysis and error analysis.

Error $E^k$ is given by
\begin{align*}
E^k := ||u^k_h - u(t^k)|| + ||v^k_h - v(t^k)||.
\end{align*}
We split the error as follows
\begin{align*}
E^k &\leq \left( ||u^k - r_h(u^k)|| + ||v^k - r_h(v^k)|| \right) + \left( ||  u^k_h-r_h(u^k) || + ||v^k_h-r_h(v^k) || \right),
\end{align*}
where first term is error between exact solution and projections, and second term is error between projections and approximate solution. Let 
\begin{align}\label{eq:def ek}
e^k_h(u) := u^k_h-r_h(u^k) \quad \text{ and  } \quad e^k_h(v) :=v^k_h- r_h(v^k)
\end{align}
and
\begin{align}\label{eq:def total ek}
e^k := ||e^k_h(u)|| + ||e^k_h(v)||.
\end{align}

Using (\ref{eq:proj error u general}), we have
\begin{align}\label{eq:Ek ineq}
E^k &\leq C_p h^2 + e^k,
\end{align}
where
\begin{align}\label{eq:const Cp}
C_p :=  c\left[ \sup_t ||u(t)||_2 + \sup_t ||\dfrac{\partial u(t)}{\partial t}||_2 \right].
\end{align}

We have following main result
\begin{theorem}\label{thm:convergence}
\textbf{Convergence of the central difference approximation with respect to the $L^2$ norm.}\\
Let $(u,v)$ be the exact solution of the peridynamics equation in (\ref{eq:per equation}). Let $(u^k_h, v^k_h)$ are the FE approximate solution of (\ref{eq:central}) and \eqref{eq:central zero}. If $u,v \in C^2([0,T]; W)$, then the scheme is consistent and the error $E^k$ satisfies following bound
\begin{align}\label{eq:Ek bound}
&\sup_{k \leq T/\Delta t} E^k \notag \\
&= C_p h^2 + \exp[T(1+L_1/\epsilon^2)(\frac{1}{1-\Delta t})] \left[ e^0 + \left(\frac{T}{1-\Delta t}\right) \left(C_t \Delta t + C_s \dfrac{h^2}{\epsilon^2} \right)  \right]
\end{align}
where the constants $C_p$, $C_t$, and $C_s$ are given by (\ref{eq:const Cp}) and (\ref{eq:const Ct and Cs}). Here the constant $L_1/\epsilon^2$ is the Lipschitz constant of $-\nabla PD^\epsilon(u)$ in $L^2$, see \eqref{eq:lipschitz const ltwo} and (\ref{L1def}). 
If the error in initial data is zero then $E^k$ is of the order of $C_t\Delta t + C_s h^2/\epsilon^2$.
The same convergence rate holds for the discreet  evolution given in strong form by \eqref{eq:central strong} and \eqref{eq:central zero strong}.
\end{theorem}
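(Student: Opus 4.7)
The plan is to run a consistency-plus-stability argument in the form already telegraphed by \eqref{eq:Ek ineq}. First I would use \eqref{eq:proj error u general} on both the displacement and velocity components, together with $u,v\in C^2([0,T];W)$, to reduce the total error $E^k$ to the projection error $C_p h^2$ plus $e^k = \|e^k_h(u)\|+\|e^k_h(v)\|$. All the remaining work then goes into bounding $e^k$.

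For the consistency step I would insert the exact solution $(u(t^k),v(t^k))$ into the discrete weak scheme \eqref{eq:forward} and measure the defect. Taylor expansion gives $(u(t^{k+1})-u(t^k))/\Delta t - v(t^{k+1}) = O(\Delta t)\sup_t\|\partial_{tt}u(t)\|$ and $(v(t^{k+1})-v(t^k))/\Delta t - \ddot u(t^k) = O(\Delta t)\sup_t\|\partial_{ttt}u(t)\|$, where Theorem \ref{thm:higher regularity} (applied as needed) legitimizes the bounds. Replacing the exact force term $-\nabla PD^\epsilon(u(t^k))$ by $-\nabla PD^\epsilon(r_h u(t^k))$ and replacing $b(t^k)$ by $b^k_h$ produces remainder terms which are controlled by the $L^2$-Lipschitz estimate \eqref{eq:lipschitz const ltwo} and \eqref{eq:proj error u general}, yielding a truncation error of the form $C_t\Delta t + C_s h^2/\epsilon^2$ with $C_t,C_s$ as in \eqref{eq:const Ct and Cs}.

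For the error recursion I would subtract the scheme satisfied by $u^k_h$ from the scheme satisfied by $r_h(u(t^k))$ (with truncation), use the defining identity \eqref{eq:proj orthognl prop} for $r_h$ to kill projection terms against test functions in $V_h$, and test with $\tilde u = e^{k+1}_h(u)$ and $\tilde u = e^{k+1}_h(v)$ respectively. Cauchy--Schwarz plus the $L^2$-Lipschitz bound \eqref{eq:lipschitz const ltwo} applied to $-\nabla PD^\epsilon(u^k_h)-(-\nabla PD^\epsilon(r_h u(t^k)))$ gives
\begin{align*}
\|e^{k+1}_h(u)\| &\le \|e^k_h(u)\| + \Delta t\,\|e^{k+1}_h(v)\| + \Delta t(C_t\Delta t + C_s h^2/\epsilon^2),\\
\|e^{k+1}_h(v)\| &\le \|e^k_h(v)\| + \Delta t\,(L_1/\epsilon^2)\,\|e^k_h(u)\| + \Delta t(C_t\Delta t + C_s h^2/\epsilon^2).
\end{align*}
Adding these and absorbing the $\Delta t\,e^{k+1}$ on the right (for $\Delta t$ small) produces the one-step recursion $e^{k+1}\le (1-\Delta t)^{-1}\bigl(e^k + \Delta t(1+L_1/\epsilon^2)e^k + \Delta t(C_t\Delta t+C_sh^2/\epsilon^2)\bigr)$, which I would iterate and combine with the elementary estimate $(1+x)^k\le \exp(kx)$ to produce the exponential factor $\exp[T(1+L_1/\epsilon^2)/(1-\Delta t)]$ announced in \eqref{eq:Ek bound}. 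The $k=0$ startup step uses \eqref{eq:central zero}, which is a second-order Taylor initialization and therefore contributes only to $e^0$ without spoiling the order.

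The main obstacle will be keeping bookkeeping of the implicit coupling between $e^{k+1}_h(u)$ and $e^{k+1}_h(v)$ clean enough that the absorption yields exactly the $(1-\Delta t)^{-1}$ denominator in the exponent rather than a weaker bound, and confirming that $C_t$ and $C_s$ come out with their declared dependence on $\|\partial_{tt}u\|$, $\|\partial_{ttt}u\|$, and $\sup_t\|u(t)\|_2$. For the strong form scheme \eqref{eq:central strong}--\eqref{eq:central zero strong} the argument is identical except that the $L^2$-projection $r_h$ is replaced by the nodal interpolant $\mathcal{I}_h$ on the force and body force; the interpolation estimate \eqref{eq:interpolation error} applied to $-\nabla PD^\epsilon(u^k_h)$, whose $H^2$ bound is controlled by Theorem \ref{thm:lip in H2}, produces the same $h^2/\epsilon^2$ spatial rate, yielding the same convergence theorem.
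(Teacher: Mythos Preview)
Your proposal is correct and follows essentially the same route as the paper: split $E^k$ via \eqref{eq:Ek ineq}, derive the two one-step inequalities for $\|e^{k+1}_h(u)\|$ and $\|e^{k+1}_h(v)\|$ by testing against $e^{k+1}_h(u)$ and $e^{k+1}_h(v)$, use the $L^2$-Lipschitz bound \eqref{eq:lipschitz const ltwo} on the peridynamic force, add, absorb the implicit $\Delta t\,e^{k+1}$ term to get the $(1-\Delta t)^{-1}$ factor, and iterate with $(1+x)^k\le e^{kx}$. The only cosmetic differences are that the paper keeps the truncation contributions $\tau^k_h(u),\tau^k_h(v),\sigma^k_{per,h}(u)$ separate rather than lumping them into $C_t\Delta t+C_sh^2/\epsilon^2$ from the start, and that the body-force replacement $b(t^k)\to b^k_h$ actually produces \emph{no} remainder in the weak form thanks to \eqref{eq:proj orthognl prop}, so you can drop that term.
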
 
Theorem \ref{thm:higher regularity}  shows that  $u,v \in C^2([0,T]; W)$ for righthand side $b\in C^1([0,T]; W)$.
In section \ref{s:discussion} we discuss the behavior of the exponential constant appearing in Theorem \ref{thm:convergence}  for evolution times seen in fracture experiments.
Since we are approximating the solution of an ODE on a Banach space the proof of Theorem
\ref{thm:convergence} will follow from the Lipschitz continuity of the force $\nabla PD^\epsilon(u)$ with respect to the $L^2$ norm. The proof is given in the following two sections.

\subsubsection{Truncation error analysis and consistency}
We derive the equation for evolution of $e^k_h(u)$ as follows
\begin{align*}
&\left( \dfrac{u^{k+1}_h - u^k_h}{\Delta t} - \dfrac{r_h(u^{k+1}) - r_h(u^k)}{\Delta t} , \tilde{u}\right)  \notag \\
&= (v^{k+1}_h, \tilde{u}) - \left( \dfrac{r_h(u^{k+1}) - r_h(u^k)}{\Delta t}, \tilde{u} \right) \notag \\
&= (v^{k+1}_h , \tilde{u}) - (r_h(v^{k+1}) , \tilde{u}) + (r_h(v^{k+1}), \tilde{u}) - (v^{k+1}, \tilde{u}) \notag \\
&\quad + (v^{k+1}, \tilde{u}) - \left( \dfrac{\partial u^{k+1}}{\partial t}, \tilde{u}\right) \notag \\
&\quad + \left( \dfrac{\partial u^{k+1}}{\partial t}, \tilde{u}\right) - \left(\dfrac{u^{k+1} - u^k}{\Delta t}, \tilde{u} \right) \notag \\
&\quad + \left( \dfrac{u^{k+1} - u^k}{\Delta t}, \tilde{u} \right) - \left(\dfrac{r_h(u^{k+1}) - r_h(u^k)}{\Delta t}, \tilde{u} \right).
\end{align*}
Using property $(r_h(u), \tilde{u}) = (u,\tilde{u})$ for $\tilde u \in V_h$ and the fact that $\frac{\partial u(t^{k+1})}{\partial t} = v^{k+1}$ where $u$ is the exact solution, we get
\begin{align}
\left( \dfrac{e^{k+1}_h(u) - e^k_h(u)}{\Delta t}, \tilde{u} \right) = (e^{k+1}_h(v), \tilde{u}) + \left( \dfrac{\partial u^{k+1}}{\partial t}, \tilde{u}\right) - \left(\dfrac{u^{k+1} - u^k}{\Delta t}, \tilde{u} \right).
\end{align}

Let $(\tau^k_h(u), \tau^k_h(v))$ be the truncation error in the time discretization given by
\begin{align*}
\tau^k_h(u) &:= \dfrac{\partial u^{k+1}}{\partial t} - \dfrac{u^{k+1} - u^k}{\Delta t}, \\
\tau^k_h(v) &:= \dfrac{\partial v^k}{\partial t} - \dfrac{v^{k+1} - v^k}{\Delta t}. 
\end{align*}

With the above notation, we have
\begin{align}
( e^{k+1}_h(u), \tilde{u}) &= (e^{k}_h(u), \tilde{u}) + \Delta t (e^{k+1}_h(v), \tilde{u} ) + \Delta t (\tau^k_h(u),\tilde{u}) . \label{eq:ek u}
\end{align}

We now derive the equation for $e^k_h(v)$ as follows
\begin{align*}
&\left( \dfrac{v^{k+1}_h - v^k_h}{\Delta t} - \dfrac{r_h(v^{k+1}) - r_h(v^k)}{\Delta t}, \tilde{u}\right)  \notag \\
&= (-\nabla PD^\epsilon(u^k_h), \tilde{u} ) + (b^k_h, \tilde{u}) - \left( \dfrac{r_h(v^{k+1}) - r_h(v^k)}{\Delta t}, \tilde{u} \right) \notag \\
&= (-\nabla PD^\epsilon(u^k_h), \tilde{u} ) + (b^k, \tilde{u}) - \left( \dfrac{\partial v^k}{\partial t}, \tilde{u} \right)\notag \\
&\quad + \left( \dfrac{\partial v^k}{\partial t}, \tilde{u}\right) - \left( \dfrac{v^{k+1} - v^k}{\Delta t}, \tilde{u} \right) \notag \\
&\quad  + \left(\dfrac{v^{k+1} - v^k}{\Delta t}, \tilde{u} \right) - \left(\dfrac{r_h(v^{k+1}) - r_h(v^k)}{\Delta t}, \tilde{u} \right) \notag \\
&= \left( -\nabla PD^\epsilon(u^k_h)+ \nabla PD^\epsilon(u^k), \tilde{u} \right) + (b^k_h - b(t^k), \tilde{u}) \notag \\
&\quad + \left( \dfrac{\partial v^k}{\partial t}, \tilde{u}\right) - \left( \dfrac{v^{k+1} - v^k}{\Delta t}, \tilde{u} \right)  + \left(\dfrac{v^{k+1} - v^k}{\Delta t}, \tilde{u} \right) - \left(\dfrac{r_h(v^{k+1}) - r_h(v^k)}{\Delta t}, \tilde{u} \right) \notag \\
&= \left( -\nabla PD^\epsilon(u^k_h)+ \nabla PD^\epsilon(u^k), \tilde{u} \right)  + \left( \dfrac{\partial v^k}{\partial t}- \dfrac{v^{k+1} - v^k}{\Delta t}, \tilde{u} \right)
\end{align*}
where we used the property of $r_h(u)$ and the fact that 
\begin{align*}
(-\nabla PD^\epsilon(u^k), \tilde{u} ) + (b^k, \tilde{u}) - \left( \dfrac{\partial v^k}{\partial t}, \tilde{u} \right) = 0, \quad \forall \tilde{u} \in V_h.
\end{align*}

We further divide the error in the peridynamics force as follows
\begin{align*}
&\left( -\nabla PD^\epsilon(u^k_h)+ \nabla PD^\epsilon(u^k), \tilde{u} \right) \notag \\
&=\left( -\nabla PD^\epsilon(u^k_h)+ \nabla PD^\epsilon(r_h(u^k)), \tilde{u} \right) + \left( -\nabla PD^\epsilon(r_h(u^k))+ \nabla PD^\epsilon(u^k), \tilde{u} \right).
\end{align*}
We will see in next section that second term is related to the truncation error in the spatial discretization. Therefore, we define another truncation error term $\sigma^k_{per,h}(u)$ as follows
\begin{align}\label{eq:consistency error u peridynamics force}
\sigma^k_{per,h}(u) &:= -\nabla PD^\epsilon(r_h(u^k)) + \nabla PD^\epsilon(u^k).
\end{align}

After substituting the notations related to truncation errors, we get
\begin{align}
(e^{k+1}_h(v), \tilde{u}) &= (e^k_h(v), \tilde{u}) + \Delta t (-\nabla PD^\epsilon(u^k_h) + \nabla PD^\epsilon(r_h(u^k)), \tilde{u})  \notag \\
&\; + \Delta t (\tau^k_h(v), \tilde{u})+ \Delta t (\sigma^k_{per,h}(u), \tilde{u}). \label{eq:ek v}
\end{align}

When $u,v$ are $C^2$ in time, we easily see
\begin{align*}
||\tau^k_h(u)|| &\leq \Delta t \sup_{t} ||\frac{\partial^2 u}{\partial t^2}|| \qquad \text{and} \qquad ||\tau^k_h(v)|| \leq \Delta t \sup_{t} ||\frac{\partial^2 v}{\partial t^2}||. 
\end{align*}
Here $u$ and $v$ are $C^2$ in time for differentiable in time body forces as stated in Theorem
\ref{thm:higher regularity} and Theorem \ref{thm:soln of v equation}.

To estimate $\sigma^k_{per,h}(u)$, we note the Lipschitz property of the peridynamics force in $L^2$ norm, see (\ref{eq:lipschitz const ltwo}). This leads us to 
\begin{align}
||\sigma^k_{per,h}(u)|| &\leq \dfrac{L_1}{\epsilon^2} ||u^k - r_h(u^k)|| \leq \dfrac{L _1c}{\epsilon^2} h^2 \sup_{t} ||u(t)||_2. \label{eq:consistency error u in spatial peridynamics force}
\end{align}

We now state the consistency of this approach.

\begin{lemma}\label{lemma: constency}
\textbf{Consistency}\\
Let $\tau$ be given by
\begin{align}
\tau &:= \sup_{k} \left( ||\tau^k_h(u)|| + ||\tau^k_h(v)|| + ||\sigma^k_{per,h}(u)|| \right), \label{tau} 
\end{align}
then the approach is consistent in that
\begin{align}
\tau&\leq C_t \Delta t + C_s \frac{h^2}{\epsilon^2} . \label{eq:total consistency error}
\end{align}
where
\begin{align}
C_t &:= \sup_{t}||\frac{\partial^2 u}{\partial t^2}|| + \sup_{t} ||\frac{\partial^2 v}{\partial t^2}|| \quad \text{and} \quad C_s := L_1 c \sup_{t} ||u(t)||_2. \label{eq:const Ct and Cs}
\end{align}
\end{lemma}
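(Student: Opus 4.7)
The plan is to collect the three bounds already established in the paragraphs immediately preceding the lemma and add them together. Since each of the three truncation quantities has been individually estimated, the lemma is essentially a bookkeeping exercise; the only care needed is to verify that the hypotheses justifying each estimate are in force.

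First, I would handle the two time-truncation terms $\tau^k_h(u)$ and $\tau^k_h(v)$ via a standard Taylor expansion argument. Under the hypothesis $u,v\in C^2([0,T];W)$ (whose availability for $C^1$ body forces is guaranteed by Theorem \ref{thm:higher regularity}), a first-order Taylor expansion of $u$ about $t^{k+1}$ in the $L^2$ norm gives
\begin{align*}
\left\| \frac{\partial u^{k+1}}{\partial t} - \frac{u^{k+1}-u^k}{\Delta t} \right\| \le \Delta t \,\sup_{t\in[0,T]} \left\|\frac{\partial^2 u}{\partial t^2}\right\|,
\end{align*}
and similarly $\|\tau^k_h(v)\| \le \Delta t\sup_t\|\partial^2 v/\partial t^2\|$. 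These bounds are uniform in $k$.

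Next, for the spatial term $\sigma^k_{per,h}(u) = -\nabla PD^\epsilon(r_h(u^k)) + \nabla PD^\epsilon(u^k)$, I would apply the $L^2$ Lipschitz estimate \eqref{eq:lipschitz const ltwo} for the peridynamic force, yielding
\begin{align*}
\|\sigma^k_{per,h}(u)\| \le \frac{L_1}{\epsilon^2}\,\|u^k - r_h(u^k)\|,
\end{align*}
and then invoke the projection error bound \eqref{eq:proj error u general} together with $u^k\in W$ to obtain $\|u^k - r_h(u^k)\| \le c h^2 \|u^k\|_2 \le c h^2 \sup_t \|u(t)\|_2$. Combining gives $\|\sigma^k_{per,h}(u)\| \le (L_1 c/\epsilon^2)h^2 \sup_t \|u(t)\|_2$, uniformly in $k$.

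Finally, I would take the supremum in $k$ of the sum of these three estimates, which directly produces
\begin{align*}
\tau \le \Delta t\Bigl(\sup_t\|\partial^2 u/\partial t^2\| + \sup_t\|\partial^2 v/\partial t^2\|\Bigr) + \frac{h^2}{\epsilon^2}\bigl(L_1 c \sup_t\|u(t)\|_2\bigr),
\end{align*}
which is exactly $C_t\Delta t + C_s h^2/\epsilon^2$ with the constants as defined in \eqref{eq:const Ct and Cs}. There is no real obstacle in the argument; the only subtlety worth flagging is ensuring that all the quantities inside the suprema are finite, which is precisely what Theorems \ref{thm:existence over finite time domain} and \ref{thm:higher regularity} provide under the stated regularity of the body force.
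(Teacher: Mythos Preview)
Your proposal is correct and matches the paper's approach exactly: the paper likewise derives the two time-truncation bounds via Taylor expansion (using $u,v\in C^2$ in time, justified by Theorem~\ref{thm:higher regularity}) and the spatial bound via the $L^2$ Lipschitz estimate \eqref{eq:lipschitz const ltwo} combined with the projection error \eqref{eq:proj error u general}, then simply sums and takes the supremum in $k$. There is no additional ingredient in the paper's treatment.
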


\subsubsection{Stability analysis}In equation for $e^k_h(u)$, see (\ref{eq:ek u}), we take $\tilde{u} = e^{k+1}_h(u)$. Note that $e^{k+1}_h(u) = u^k_h - r_h(u^k) \in V_h$. We have
\begin{align*}
||e^{k+1}_h(u)||^2 &= (e^{k}_h(u), e^{k+1}_h(u)) + \Delta t (e^{k+1}_h(v), e^{k+1}_h(u)) + \Delta t (\tau^k_h(u), e^{k+1}_h(u)),
\end{align*}
and we get
\begin{align*}
||e^{k+1}_h(u)||^2 &\leq ||e^{k}_h(u)|| \;||e^{k+1}_h(u)|| + \Delta t ||e^{k+1}_h(v)||\; ||e^{k+1}_h(u)|| + \Delta t  ||\tau^{k}_h(u)|| \; ||e^{k+1}_h(u)||.
\end{align*}
Canceling $||e^{k+1}_h(u)||$ from both sides gives
\begin{align}\label{eq:Ltwo ek u}
||e^{k+1}_h(u)|| &\leq ||e^{k}_h(u)|| + \Delta t ||e^{k+1}_h(v)|| + \Delta t ||\tau^{k}_h(u)|| .
\end{align}

Similarly, if we choose $\tilde{u} = e^{k+1}_h(v)$ in (\ref{eq:ek v}), and use the steps similar to above, we get
\begin{align}\label{eq:Ltwo ek v}
||e^{k+1}_h(v)|| &\leq ||e^{k}_h(v)|| + \Delta t ||-\nabla PD^\epsilon(u^k_h) + \nabla PD^\epsilon(r_h(u^k))|| \notag \\
& \quad + \Delta t \left( ||\tau^{k}_h(v)|| +  ||\sigma^{k}_{per,h}(u)|| \right).
\end{align}

Using the Lipschitz property of the peridynamics force in $L^2$, we have
\begin{align}\label{eq:error in approx u peridynamics force}
||-\nabla PD^\epsilon(u^k_h) + \nabla PD^\epsilon(r_h(u^k))|| &\leq \dfrac{L_1}{\epsilon^2} ||u^k_h  - r_h(u^k)|| = \dfrac{L_1}{\epsilon^2} ||e^k_h(u)||.
\end{align}

After adding (\ref{eq:Ltwo ek u}) and (\ref{eq:Ltwo ek v}), and substituting (\ref{eq:error in approx u peridynamics force}), we get
\begin{align*}
||e^{k+1}_h(u)|| + ||e^{k+1}_h(v)|| &\leq ||e^k_h(u)|| + ||e^k_h(v)|| + \Delta t ||e^{k+1}_h(v)|| + \dfrac{L_1}{\epsilon^2} \Delta t ||e^k_h(u)|| + \Delta t \tau
\end{align*}
where $\tau$ is defined in (\ref{eq:total consistency error}). 

Let $e^k := ||e^k_h(u)|| + ||e^k_h(v)||$. Assuming $L_1/\epsilon^2 \geq 1$, we get
\begin{align*}
&e^{k+1} \leq e^k + \Delta t e^{k+1} + \Delta t \dfrac{L_1}{\epsilon^2} e^k + \Delta t \tau \\
\Rightarrow & e^{k+1} \leq \dfrac{1 + \Delta t L_1/\epsilon^2}{1-\Delta t} e^k + \dfrac{\Delta t}{1 - \Delta t} \tau.
\end{align*}

Substituting for $e^k$ recursively in the equation above, we get
\begin{align*}
e^{k+1} &\leq \left(\dfrac{1 + \Delta t L_1/\epsilon^2}{1-\Delta t} \right)^{k+1} e^0 + \dfrac{\Delta t}{1 - \Delta t} \tau \sum_{j=0}^k \left(\dfrac{1 + \Delta t L_1/\epsilon^2}{1-\Delta t} \right)^{k-j}.
\end{align*}
Noting $(1 +a \Delta t )^k \leq \exp[k a\Delta t ] \leq \exp[Ta]$ for $a>0$ and
\begin{align*}
\dfrac{1 + \Delta t L_1/\epsilon^2}{1-\Delta t} &= 1 + \frac{(1+L_1/\epsilon^2)}{1-\Delta t} \Delta t
\end{align*}
we get
\begin{align*}
\left(\dfrac{1 + \Delta t L_1/\epsilon^2}{1-\Delta t} \right)^k &\leq \exp[\frac{T(1+L_1/\epsilon^2)}{1-\Delta t}].
\end{align*}

Substituting above estimates, we can easily show that
\begin{align*}
e^{k+1} &\leq \exp[\frac{T(1+L_1/\epsilon^2)}{1-\Delta t}] \left[ e^0 + \dfrac{\Delta t}{1 - \Delta t} \tau \sum_{j=0}^k 1 \right]\\
&\leq \exp[\frac{T(1+L_1/\epsilon^2)}{1-\Delta t}] \left[ e^0 + \dfrac{k\Delta t}{1 - \Delta t} \tau\right].
\end{align*}

Finally, we substitute above into (\ref{eq:Ek ineq}) to conclude

\begin{lemma}\label{lemma: stability}
\textbf{Stability}\\
\begin{align}\label{LaxRichtmyer}
E^k &\leq C_p h^2 + \exp[\frac{T(1+L_1/\epsilon^2)}{1-\Delta t}] \left[ e^0 + \dfrac{k\Delta t}{1 - \Delta t} \tau\right].
\end{align}
After taking sup over $k\leq T/\Delta t$ and substituting the bound on $\tau$ from Lemma \ref{lemma: constency}, we get the desired result and proof of Theorem \ref{thm:convergence} is complete.
\end{lemma}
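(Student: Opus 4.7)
The strategy is to decompose the total error via triangle inequality as $E^k \leq (\|u^k - r_h(u^k)\| + \|v^k - r_h(v^k)\|) + e^k$, where the projection piece is immediately bounded by $C_p h^2$ using the interpolation estimate \eqref{eq:proj error u general} applied to $u^k$ and $v^k = \partial_t u^k$ (both of which lie in $W$ thanks to Theorem \ref{thm:higher regularity}). The bulk of the work is then to control the discrete error $e^k = \|e_h^k(u)\| + \|e_h^k(v)\|$ by a discrete Gronwall argument in $L^2$.

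First I would derive difference equations for $e_h^k(u)$ and $e_h^k(v)$ by subtracting suitable continuous-at-$t^k$ versions of the evolution from \eqref{eq:forward}. Using $(r_h(u),\tilde u) = (u,\tilde u)$ for $\tilde u \in V_h$ and the standard add-and-subtract telescoping, three truncation contributions appear: two time-stepping residuals $\tau_h^k(u) = \partial_t u^{k+1} - (u^{k+1}-u^k)/\Delta t$ and $\tau_h^k(v) = \partial_t v^k - (v^{k+1}-v^k)/\Delta t$, and a spatial residual $\sigma^k_{per,h}(u) = -\nabla PD^\epsilon(r_h(u^k)) + \nabla PD^\epsilon(u^k)$ arising from evaluating the peridynamic force at the projection rather than at $u^k$. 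Taylor expansion gives $\|\tau_h^k(u)\|, \|\tau_h^k(v)\| \leq \Delta t\, C_t$ (here $C^2$-in-time regularity is supplied by Theorem \ref{thm:higher regularity}), while the $L^2$-Lipschitz property \eqref{eq:lipschitz const ltwo} combined with \eqref{eq:proj error u general} delivers $\|\sigma^k_{per,h}(u)\| \leq (L_1 c/\epsilon^2) h^2 \sup_t \|u(t)\|_2$. Summing gives the consistency bound $\tau \leq C_t \Delta t + C_s h^2/\epsilon^2$.

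For the stability step I would test the $u$-error equation with $\tilde u = e_h^{k+1}(u) \in V_h$ and the $v$-error equation with $\tilde u = e_h^{k+1}(v)$, apply Cauchy--Schwarz, and cancel one factor of $\|e_h^{k+1}\|$ from both sides. The only nonlinear term, $-\nabla PD^\epsilon(u_h^k) + \nabla PD^\epsilon(r_h(u^k))$, is controlled by $(L_1/\epsilon^2)\|e_h^k(u)\|$ using \eqref{eq:lipschitz const ltwo} once more. Adding the two inequalities and rearranging yields the recursion
\begin{equation*}
e^{k+1} \leq \frac{1 + \Delta t\, L_1/\epsilon^2}{1 - \Delta t}\, e^k + \frac{\Delta t}{1 - \Delta t}\, \tau.
\end{equation*}
A discrete Gronwall iteration, combined with $(1 + a \Delta t)^k \leq \exp(Ta)$ applied to the amplification factor, produces the exponential bound stated in the theorem. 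The strong-form case \eqref{eq:central strong}--\eqref{eq:central zero strong} is handled identically because the nodal interpolant inherits the $L^2$-Lipschitz estimate.

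The main obstacle is the bookkeeping in the second step: the add-and-subtract decomposition for $e_h^k(v)$ must be arranged so that the residual splits cleanly into the three named truncation terms plus exactly the Lipschitz-controllable difference $-\nabla PD^\epsilon(u_h^k) + \nabla PD^\epsilon(r_h(u^k))$. Once this splitting is done carefully, the remaining argument is routine. The conceptual point worth emphasizing is that the convergence rate is governed by the $L^2$-Lipschitz constant $L_1/\epsilon^2$ rather than the much larger $H^2$-Lipschitz constant of Theorem \ref{thm:lip in H2}, which is why the final rate scales as $h^2/\epsilon^2$ rather than $h^2/\epsilon^3$.
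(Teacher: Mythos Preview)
Your proposal is correct and follows essentially the same approach as the paper: the same triangle-inequality splitting $E^k \leq C_p h^2 + e^k$, the same three truncation terms $\tau_h^k(u)$, $\tau_h^k(v)$, $\sigma^k_{per,h}(u)$, the same choice of test functions $\tilde u = e_h^{k+1}(u)$ and $\tilde u = e_h^{k+1}(v)$, the same use of the $L^2$-Lipschitz bound \eqref{eq:lipschitz const ltwo}, and the same discrete Gronwall recursion leading to the exponential factor. Your remark that the $L^2$ rather than $H^2$ Lipschitz constant governs the rate is also the paper's point.
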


%
%
%
%

We now consider a stronger notion of stability for the linearized peridynamics model.

\subsection{Linearized peridynamics and energy stability}
In this section, we linearize the peridynamics model and obtain a CFL like stability condition. For problems where strains are small, the stability condition for the linearized model is expected to apply to the nonlinear model. The slope of peridynamics potential $f$ is constant for sufficiently small strain and therefore for small strain the nonlinear model behaves like a linear model. When displacement field is smooth, the difference between the linearized peridynamics force and the nonlinear peridynamics force is of the order of $\epsilon$. See [Proposition 4, \citet{CMPer-JhaLipton2}]. 

For strain far below the critical strain, ie., $|S(u)|<<S_c$ we expand  the integrand of (\ref{eq:per force}) in a Taylor series about zero to obtain the linearized peridymamic force given by
\begin{align}\label{eq:per force linear}
-\nabla PD^{\epsilon}_l(u)(x) = \dfrac{4}{\epsilon^{d+1} \omega_d} \int_{H_{\epsilon}(x)} \omega(x) \omega(y) J^\epsilon(|y - x|) f'(0) S(u) e_{y - x} dy.
\end{align}
The corresponding bilinear form is denoted as $a^\epsilon_l$ and is given by
\begin{align}
a^\epsilon_l(u, v) &= \dfrac{2}{\epsilon^{d+1} \omega_d} \int_D \int_{H_\epsilon(x)} \omega(x) \omega(y) J^\epsilon(|y - x|) f'(0) |y - x| S(u) S(v) dy dx. \label{eq:operator a linear}
\end{align}
We have
\begin{align*}
(-\nabla PD^\epsilon_l(u), v) = - a^\epsilon_l(u,v).
\end{align*}
We now discuss the stability of the FEM approximation to the linearized problem. 
We replace  $-\nabla PD^\epsilon$ by its linearization denoted by $-\nabla PD^\epsilon_l$ in (\ref{eq:central}) and (\ref{eq:central zero}).  The corresponding approximate solution in $V_h$ is denoted by $u^k_{l,h}$  where
\begin{align}\label{eq:central lin}
\left( \dfrac{u^{k+1}_{l,h} - 2 u^k_{l,h} + u^{k-1}_{l,h}}{\Delta t^2}, \tilde{u} \right) &= (-\nabla PD^\epsilon_l(u^k_{l,h}), \tilde{u} ) + (b^k_{h}, \tilde{u}), \qquad \forall \tilde{u} \in V_h
\end{align}
and
\begin{align}\label{eq:central zero lin}
\left( \dfrac{u^1_{l,h} - u^0_{l,h}}{\Delta t^2}, \tilde{u}\right) &= \dfrac{1}{2}(-\nabla PD^\epsilon(u^0_{l,h}), \tilde{u})+ \dfrac{1}{\Delta t} (v^0_{l,h}, \tilde{u}) + \dfrac{1}{2}(b^0_h, \tilde{u}), \qquad \forall \tilde{u} \in V_h.
\end{align}

We will adopt following notations 
\begin{align}
&\overline{u}^{k+1}_h := \frac{u^{k+1}_h + u^k_h}{2}, \: \overline{u}^{k}_h := \frac{u^k_h + u^{k-1}_h}{2}, \notag \\
&\bar{\partial}_t u^k_h := \frac{u^{k+1}_h - u^{k-1}_h}{2 \Delta t}, \: \bar{\partial}_t^+ u^k_h :=  \frac{u^{k+1}_h - u^{k}_h}{\Delta t}, \: \bar{\partial}_t^- u^k_h :=  \frac{u^{k}_h - u^{k-1}_h}{\Delta t}.
\end{align}
With above notations, we have
\begin{align*}
\bar{\partial}_t u^k_h &= \frac{\bar{\partial}^+_t u^k_h + \bar{\partial}^{-}_h u^k_h}{ 2} = \frac{\overline{u}^{k+1}_h - \overline{u}^{k}_h}{\Delta t}.
\end{align*}
We also define
\begin{align*}
\bar{\partial}_{tt} u^k_h &:= \dfrac{u^{k+1}_h - 2u^k_h + u^{k-1}_h}{\Delta t^2} = \dfrac{\bar{\partial}^+_t u^k_h - \bar{\partial}^-_t u^k_h}{\Delta t}.
\end{align*}
We introduce the discrete energy associated with $u^k_{l,h}$ at time step $k$ as defined by
\begin{align*}
\mathcal{E}(u^k_{l,h}) &:= \frac{1}{2} \left[  ||\bar{\partial}^+_t u^k_{l,h}||^2 - \frac{\Delta t^2}{4} a^\epsilon_l(\bar{\partial}^+_t u^k_{l,h}, \bar{\partial}^+_t u^k_{l,h}) + a^\epsilon_l(\overline{u}^{k+1}_{l,h} , \overline{u}^{k+1}_{l,h}) \right]
\end{align*}

Following [Theorem 4.1, \citet{CMPer-Karaa}], we have

\begin{theorem}\label{thm:cfl condition l}
\textbf{Energy Stability of the Central difference approximation of linearized peridynamics}

Let $u^k_{l,h}$ be the approximate solution of (\ref{eq:central lin}) and (\ref{eq:central zero lin}) with respect to linearized peridynamics. In the absence of body force $b(t) = 0$ for all $t$, if $\Delta t$ satisfies the CFL like condition
\begin{align}
\frac{\Delta t^2}{4} \sup_{u\in V_h \setminus \{0\}} \dfrac{a^\epsilon_l(u,u)}{(u,u)} \leq 1,
\end{align}
The discrete energy
is positive and we have the stability 
\begin{align}\label{energy}
\mathcal{E}(u^k_{l,h}) = \mathcal{E}(u^{k-1}_{l,h}).
\end{align}
\end{theorem}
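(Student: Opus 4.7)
The plan is to run the standard central-difference energy identity on the fully discrete scheme (no body force) and then read off both conservation and the sign of $\mathcal{E}(u^k_{l,h})$ from the CFL hypothesis. First I would take the weak form \eqref{eq:central lin} with $b^k_h\equiv 0$ and test against $\tilde u = u^{k+1}_{l,h}-u^{k-1}_{l,h} = 2\Delta t\,\bar{\partial}_t u^k_{l,h}\in V_h$. On the inertial side, the elementary identity
\begin{equation*}
(u^{k+1}-2u^k+u^{k-1},\,u^{k+1}-u^{k-1}) = \|u^{k+1}-u^k\|^2 - \|u^k-u^{k-1}\|^2
\end{equation*}
(the discrete analog of $2\ddot u\dot u = (d/dt)|\dot u|^2$) yields $(\bar\partial_{tt}u^k_{l,h},u^{k+1}_{l,h}-u^{k-1}_{l,h}) = \|\bar\partial^+_t u^k_{l,h}\|^2-\|\bar\partial^-_t u^k_{l,h}\|^2$.

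The heart of the argument is the stiffness term $a^\epsilon_l(u^k_{l,h},\,u^{k+1}_{l,h}-u^{k-1}_{l,h})$. I would use the straightforward algebraic identity $2u^k_{l,h} = \overline{u}^{k+1}_{l,h}+\overline{u}^k_{l,h} - \tfrac{\Delta t^2}{2}\bar{\partial}_{tt}u^k_{l,h}$ (verified by expanding the averages) together with $u^{k+1}_{l,h}-u^{k-1}_{l,h}=2(\overline{u}^{k+1}_{l,h}-\overline{u}^k_{l,h})$. Applying bilinearity and the symmetry of $a^\epsilon_l$ to $a^\epsilon_l(\overline{u}^{k+1}_{l,h}+\overline{u}^k_{l,h},\,\overline{u}^{k+1}_{l,h}-\overline{u}^k_{l,h})$ produces the telescoping term $a^\epsilon_l(\overline{u}^{k+1}_{l,h},\overline{u}^{k+1}_{l,h}) - a^\epsilon_l(\overline{u}^k_{l,h},\overline{u}^k_{l,h})$. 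The remaining correction $-\tfrac{\Delta t^2}{2}a^\epsilon_l(\bar{\partial}_{tt}u^k_{l,h},\,\overline{u}^{k+1}_{l,h}-\overline{u}^k_{l,h})$ collapses cleanly using $\overline{u}^{k+1}_{l,h}-\overline{u}^k_{l,h}=\tfrac{\Delta t}{2}(\bar{\partial}^+_t u^k_{l,h}+\bar{\partial}^-_t u^k_{l,h})$ and $\Delta t\,\bar{\partial}_{tt}u^k_{l,h}=\bar{\partial}^+_t u^k_{l,h}-\bar{\partial}^-_t u^k_{l,h}$ into $-\tfrac{\Delta t^2}{4}[a^\epsilon_l(\bar{\partial}^+_t u^k_{l,h},\bar{\partial}^+_t u^k_{l,h}) - a^\epsilon_l(\bar{\partial}^-_t u^k_{l,h},\bar{\partial}^-_t u^k_{l,h})]$. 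Summing inertial and stiffness contributions and dividing by $2$ is precisely $\mathcal{E}(u^k_{l,h})-\mathcal{E}(u^{k-1}_{l,h})=0$, giving \eqref{energy}.

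Positivity is then cheap. The bilinear form $a^\epsilon_l(v,v)$ is a nonnegative integral of $f'(0)\,|y-x|\,S(v)^2$ for any $v\in V_h$, so $a^\epsilon_l(\overline{u}^{k+1}_{l,h},\overline{u}^{k+1}_{l,h})\geq 0$; and applying the CFL hypothesis to $u=\bar{\partial}^+_t u^k_{l,h}\in V_h$ gives $\tfrac{\Delta t^2}{4}a^\epsilon_l(\bar{\partial}^+_t u^k_{l,h},\bar{\partial}^+_t u^k_{l,h})\le \|\bar{\partial}^+_t u^k_{l,h}\|^2$, so the first two terms of $\mathcal{E}(u^k_{l,h})$ combine to something nonnegative. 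The one delicate step is the stiffness algebra: guessing the exact decomposition $2u^k = \overline{u}^{k+1}+\overline{u}^k - \tfrac{\Delta t^2}{2}\bar{\partial}_{tt}u^k$ is what forces both the telescoping piece $a^\epsilon_l(\overline{u}^{k+1},\overline{u}^{k+1}) - a^\epsilon_l(\overline{u}^k,\overline{u}^k)$ and the $\Delta t^2/4$ correction appearing in the definition of $\mathcal{E}(u^k_{l,h})$ to emerge simultaneously. Once that identity is in hand, everything else is routine central-difference bookkeeping exactly as in the Karaa argument for the linear wave equation.
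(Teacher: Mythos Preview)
Your proposal is correct and follows essentially the same route as the paper: test the scheme against $\bar\partial_t u^k_{l,h}$ (up to the harmless factor $2\Delta t$), rewrite $u^k_{l,h}$ via the identity $u^k_{l,h}+\tfrac{\Delta t^2}{4}\bar\partial_{tt}u^k_{l,h}=\tfrac{1}{2}(\overline{u}^{k+1}_{l,h}+\overline{u}^k_{l,h})$ to make the stiffness term telescope, and then read off positivity from the CFL bound applied to $\bar\partial_t^+ u^k_{l,h}$. The only cosmetic difference is that the paper adds and subtracts $\tfrac{\Delta t^2}{4}a^\epsilon_l(\bar\partial_{tt}u^k_{l,h},\tilde u)$ before choosing the test function, whereas you test first and then decompose $u^k_{l,h}$; the underlying algebra is identical.
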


\begin{proof}
Set $b(t) = 0$. Noting that $a^\epsilon_l$ is bilinear, after adding and subtracting term $(\Delta t^2/4)a^\epsilon_l(\bar{\partial}_{tt} u^k_{l,h}, \tilde{u})$ to (\ref{eq:central lin}), and noting following
\begin{align*}
u^k_{l,h} + \dfrac{\Delta t^2}{4} \bar{\partial}_{tt} u^k_{l,h} = \dfrac{\overline{u}^{k+1}_{l,h}}{2} + \dfrac{\overline{u}^{k}_{l,h}}{2}
\end{align*}
we get
\begin{align*}
(\bar{\partial}_{tt} u^k_{l,h}, \tilde{u}) -\dfrac{\Delta t^2}{4}a^\epsilon_l(\bar{\partial}_{tt} u^k_{l,h}, \tilde{u}) + \dfrac{1}{2} a^\epsilon_l(\overline{u}^{k+1}_{l,h} + \overline{u}^{k}_{l,h}, \tilde{u}) = 0.
\end{align*}
We let $\tilde{u} = \bar{\partial}_t u^k_{l,h}$, to write
\begin{align*}
(\bar{\partial}_{tt} u^k_{l,h}, \bar{\partial}_t u^k_{l,h}) -\dfrac{\Delta t^2}{4}a^\epsilon_l(\bar{\partial}_{tt} u^k_{l,h}, \bar{\partial}_t u^k_{l,h}) + \dfrac{1}{2} a^\epsilon_l(\overline{u}^{k+1}_{l,h}+\overline{u}^{k}_{l,h}, \bar{\partial}_t u^k_{l,h}) = 0.
\end{align*}
It is easily shown that
\begin{align*}
(\bar{\partial}_{tt} u^k_{l,h}, \bar{\partial}_t u^k_{l,h}) &= \left( \dfrac{\bar{\partial}_t^+ u^k_{l,h} - \bar{\partial}_t^- u^k_{l,h}}{\Delta t}, \dfrac{\bar{\partial}_t^+ u^k_{l,h} + \bar{\partial}_t^- u^k_{l,h}}{2} \right) = \dfrac{1}{2 \Delta t} \left[ ||\bar{\partial}_t^+ u^k_{l,h}||^2 - ||\bar{\partial}_t^- u^k_{l,h}||^2 \right]
\end{align*}
and
\begin{align*}
a^\epsilon_l(\bar{\partial}_{tt} u^k_{l,h}, \bar{\partial}_t u^k_{l,h}) &= \dfrac{1}{2\Delta t} \left[a^\epsilon_l(\bar{\partial}_t^+ u^k_{l,h}, \bar{\partial}_t^+ u^k_{l,h}) - a^\epsilon_l(\bar{\partial}_t^- u^k_{l,h}, \bar{\partial}_t^- u^k_{l,h}) \right].
\end{align*}
Noting that $\bar{\partial}_t u^k_{l,h} = (\overline{u}^{k+1}_{l,h} - \overline{u}^{k}_{l,h})/\Delta t$, we get
\begin{align*}
&\dfrac{1}{2\Delta t}a^\epsilon_l(\overline{u}^{k+1}_{l,h}+\overline{u}^{k}_{l,h}, \overline{u}^{k+1}_{l,h} - \overline{u}^{k}_{l,h}) \notag \\
&= \dfrac{1}{2 \Delta t} \left[a^\epsilon_l(\overline{u}^{k+1}_{l,h}, \overline{u}^{k+1}_{l,h}) - a^\epsilon_l(\overline{u}^{k}_{l,h}, \overline{u}^{k}_{l,h}) \right].
\end{align*}
After combining the above equations, we get
\begin{align}
\dfrac{1}{\Delta t} & \left[ \left(\dfrac{1}{2} ||\bar{\partial}_t^+ u^k_{l,h}||^2 - \dfrac{\Delta t^2}{8} a^\epsilon_l(\bar{\partial}_t^+ u^k_{l,h}, \bar{\partial}_t^+ u^k_{l,h}) +  \dfrac{1}{2} a^\epsilon_l(\overline{u}^{k+1}_{l,h}, \overline{u}^{k+1}_{l,h}) \right) \right. \notag \\
&\: - \left. \left(\dfrac{1}{2} ||\bar{\partial}_t^- u^k_{l,h}||^2 - \dfrac{\Delta t^2}{8} a^\epsilon_l(\bar{\partial}_t^- u^k_{l,h}, \bar{\partial}_t^- u^k_{l,h}) +  \dfrac{1}{2} a^\epsilon_l(\overline{u}^{k}_{l,h}, \overline{u}^{k}_{l,h}) \right) \right] = 0.\label{eq:stab l}
\end{align}
We recognize the first term in bracket as $\mathcal{E}(u^k_{l,h})$. We next prove that the second term is $\mathcal{E}(u^{k-1}_{l,h})$. We substitute $k = k-1$ in the definition of $\mathcal{E}(u^k_{l,h})$, to get
\begin{align*}
\mathcal{E}(u^{k-1}_{l,h}) &= \frac{1}{2} \left[  ||\bar{\partial}^+_t u^{k-1}_{l,h}||^2 - \frac{\Delta t^2}{4} a^\epsilon_l(\bar{\partial}^+_t u^{k-1}_{l,h}, \bar{\partial}^+_t u^{k-1}_{l,h}) + a^\epsilon_l(\overline{u}^{k}_{l,h} , \overline{u}^{k}_{l,h}) \right].
\end{align*}
We clearly have $\bar{\partial}^+_t u^{k-1}_{l,h} = \frac{u^{k-1+1}_{l,h} - u^{k-1}_{l,h}}{\Delta t} = \bar{\partial}^{-}_t u^k_{l,h}$ and this implies that the second term in (\ref{eq:stab l}) is $\mathcal{E}(u^{k-1}_{l,h})$.
It now follows from (\ref{eq:stab l}), that $\mathcal{E}(u^k_{l,h} )= \mathcal{E}(u^{k-1}_{l,h})$. 

We now establish the positivity of the discrete energy $\mathcal{E}(u^k_{l,h})$. To do this we derive a condition on the time step that insures the sum of the first two terms is positive and the positivity of $\mathcal{E}(u^k_{l,h})$ will follow. Let $v = \bar{\partial}^+_t u^k_{l,h} \in V_h$, then we require
\begin{align}\label{eq:stab l 1}
||v||^2 - \dfrac{\Delta t^2}{4}a^\epsilon_l(v,v) \geq 0 \quad \Rightarrow \dfrac{\Delta t^2}{4} \dfrac{a^\epsilon_l(v,v)}{||v||^2} \leq 1
\end{align}
Clearly if $\Delta t$ satisfies 
\begin{align}\label{eq:stab linear}
\dfrac{\Delta t^2}{4} \sup_{ v \in V_h \setminus \{0\}} \dfrac{a^\epsilon_l(v,v)}{||v||^2} \leq 1
\end{align}
then (\ref{eq:stab l 1}) is also satisfied and the discrete energy is positive.
Iteration gives  $\mathcal{E}(u^k_{l,h} )= \mathcal{E}(u^{0}_{l,h})$ and the theorem is proved.

\end{proof}

\section{Proof of claims}\label{s:proofs}
In this section we establish the Lipschitz continuity in the space $W = H^2(D;\R^d) \cap H^1_0(D;\R^d)$ and the existence of a differentiable in time solution to the peridynamic evolution belonging to 
 $W$. We outline the proof of Lipschitz continuity of $Q(v;u)$, see (\ref{eq:per equation v}), required to show the higher regularity of solutions in time.
 
\subsection{Lipschitz continuity in $ H^2\cap H^1_0$}
We now prove the Lipschitz continuity given by Theorem \ref{thm:lip in H2}. 

To simplify the presentation, we write the peridynamics force $-\nabla PD^\epsilon(u)$ as $P(u)$. 
We need to analyze $||P(u) - P(v)||_2$.

We first introduce the following convenient notations
\begin{align}
&s_\xi = \epsilon |\xi|, \: e_\xi = \dfrac{\xi}{|\xi|}, \: \bar{J}_\alpha = \dfrac{1}{\omega_d } \int_{H_1(0)} J(|\xi|) \dfrac{1}{|\xi|^\alpha} d\xi, \label{n1}\\
&S_\xi(u) = \dfrac{u(x+\epsilon \xi) - u(x)}{s_\xi} \cdot e_\xi,  \label{n2}\\
&S_\xi(\nabla u) = \nabla S_\xi(u) = \dfrac{\nabla u^T(x+\epsilon \xi) - \nabla u^T(x)}{s_\xi} e_\xi ,\label{n3}\\
&S_\xi(\nabla^2 u) = \nabla S_\xi(\nabla u) = \nabla \left[ \dfrac{\nabla u^T(x+\epsilon \xi) - \nabla u^T(x)}{s_\xi} e_\xi \right].\label{n4}
\end{align}
In indicial notation, we have
\begin{align}
S_\xi(\nabla u)_i &= \dfrac{ u_{k,i}(x+\epsilon \xi) - u_{k,i}(x)}{s_\xi} (e_\xi)_k, \notag \\
S_\xi(\nabla^2 u)_{ij} &=  \left[ \dfrac{ u_{k,i}(x+\epsilon \xi) - u_{k,i}(x)}{s_\xi} (e_\xi)_k \right]_{,j} = \dfrac{ u_{k,ij}(x+\epsilon \xi) - u_{k,ij}(x)}{s_\xi} (e_\xi)_k
\end{align}
and 
\begin{align}
[e_\xi \otimes S_\xi(\nabla^2 u) ]_{ijk} = (e_\xi)_i S_\xi(\nabla^2 u)_{jk},
\end{align}
where $u_{i,j} = (\nabla u)_{ij}$, $u_{k,ij} = (\nabla^2 u)_{kij}$, and $(e_\xi)_k = \xi_k/|\xi|$.  


We now examine the Lipschitz properties of the peridynamic force. Let $F_1(r) := f(r^2)$ where $f$ is described in the Section \ref{s:intro model}. We have $F_1'(r) = f'(r^2) 2r$. Thus, $2S f'(\epsilon |\xi| S^2)  = F_1'(\sqrt{\epsilon |\xi|}S)/\sqrt{\epsilon|\xi|}$. We define following constants related to nonlinear potential
\begin{align}
C_1 := \sup_r |F'_1(r)| , \: C_2 := \sup_r |F''_1(r)| , \: C_3 := \sup_r |F'''_1(r)| , \: C_4 := \sup_r |F''''_1(r)| .\label{Constants}
\end{align}
The potential function $f$ as chosen here satisfies $C_1, C_2, C_3, C_4 < \infty$. Let
\begin{align}
\bar{\omega}_\xi(x) = \omega(x) \omega(x+ \epsilon \xi),
\end{align}
and we choose $\omega$ such that
\begin{align}\label{eq:const C omega}
|\nabla \bar{\omega}_\xi| \leq C_{\omega_1} < \infty \quad \text{and} \quad |\nabla^2 \bar{\omega}_\xi| \leq C_{\omega_2} < \infty.
\end{align}

With notations described so far, we write peridynamics force $P(u)$ as
\begin{align}
P(u)(x) &= \dfrac{2}{\epsilon \omega_d} \int_{H_1(0)} \bar{\omega}_\xi(x) J(|\xi|)\dfrac{F_1'(\sqrt{s_\xi} S_\xi(u))}{\sqrt{s_\xi}} e_\xi d\xi. \label{eq:p u} 
\end{align}
The gradient of $P(u)(x)$ is given by
\begin{align}
\nabla P(u)(x) &= \dfrac{2}{\epsilon \omega_d} \int_{H_1(0)} \bar{\omega}_\xi(x) J(|\xi|)F_1''(\sqrt{s_\xi} S_\xi(u)) e_\xi \otimes  \nabla S_\xi(u) d\xi \notag \\
&\: + \dfrac{2}{\epsilon \omega_d} \int_{H_1(0)} J(|\xi|)\dfrac{F_1'(\sqrt{s_\xi} S_\xi(u))}{\sqrt{s_\xi}} e_\xi \otimes \nabla \bar{\omega}_\xi(x) d\xi \notag \\
&= g_1(u)(x) + g_2(u)(x), \label{eq:nabla p u}
\end{align}
where we denote first and second term as $g_1(u)(x)$ and $g_2(u)(x)$ respectively. We also have
\begin{align}
\nabla^2 P(u)(x) &= \dfrac{2}{\epsilon \omega_d} \int_{H_1(0)} \bar{\omega}_\xi(x) J(|\xi|)  F_1''(\sqrt{s_\xi} S_\xi(u)) e_\xi \otimes S_\xi(\nabla^2 u) d\xi \notag \\
&\: + \dfrac{2}{\epsilon \omega_d} \int_{H_1(0)} \bar{\omega}_\xi(x) J(|\xi|) \sqrt{s_\xi} F_1'''(\sqrt{s_\xi} S_\xi(u)) e_\xi \otimes S_\xi(\nabla u) \otimes S_\xi(\nabla u) d\xi \notag \\
&\: + \dfrac{2}{\epsilon \omega_d} \int_{H_1(0)} J(|\xi|) F_1''(\sqrt{s_\xi} S_\xi(u)) e_\xi \otimes S_\xi(\nabla u) \otimes \nabla \bar{\omega}_\xi(x)  d\xi \notag \\
&\: + \dfrac{2}{\epsilon \omega_d} \int_{H_1(0)} J(|\xi|)\dfrac{F_1'(\sqrt{s_\xi} S_\xi(u))}{\sqrt{s_\xi}} e_\xi \otimes \nabla^2 \bar{\omega}_\xi(x) d\xi \notag \\
&\: + \dfrac{2}{\epsilon \omega_d} \int_{H_1(0)} J(|\xi|) F_1''(\sqrt{s_\xi} S_\xi(u)) e_\xi \otimes \nabla \bar{\omega}_\xi(x) \otimes S_\xi(\nabla u) d\xi \notag \\
&= h_1(u)(x) + h_2(u)(x) + h_3(u)(x) + h_4(u)(x) + h_5(u)(x) \label{eq:nabla 2 p u}
\end{align}
where we denote first, second, third, fourth, and fifth terms as $h_1, h_2, h_2, h_4, h_5$ respectively. 

\paragraph{Estimating $||P(u) - P(v)||$}From (\ref{eq:p u}), we have
\begin{align}
|P(u)(x) - P(v)(x)| &\leq \dfrac{2}{\epsilon \omega_d} \int_{H_1(0)} J(|\xi|) \dfrac{1}{\sqrt{s_\xi}} |F'_1(\sqrt{s_\xi}S_\xi(u)) - F'_1(\sqrt{s_\xi}S_\xi(v))| d\xi \notag \\
&\leq \dfrac{2}{\epsilon \omega_d} \left(\sup_r |F_1'(r)| \right) \int_{H_1(0)} J(|\xi|) \dfrac{1}{\sqrt{s_\xi}} |\sqrt{s_\xi}S_\xi(u) - \sqrt{s_\xi}S_\xi(v)| d\xi \notag \\
&= \dfrac{2 C_2}{\epsilon \omega_d} \int_{H_1(0)} J(|\xi|) |S_\xi(u) - S_\xi(v)| d\xi, \label{eq:bd 1}
\end{align}
where we used the fact that $|\bar{\omega}_\xi(x) | \leq 1$ and $|F_1'(r_1) - F_1'(r_2)| \leq C_2 |r_1 - r_2|$. 

From (\ref{eq:bd 1}), we have
\begin{align*}
&||P(u) - P(v)||^2 \notag \\
&\leq \int_D \left( \dfrac{2 C_2}{\epsilon \omega_d} \right)^2 \int_{H_1(0)} \int_{H_1(0)} \dfrac{J(|\xi|)}{|\xi|} \dfrac{J(|\eta|)}{|\eta|} |\xi||S_\xi(u) - S_\xi(v)| |\eta||S_\eta(u) - S_\eta(v)| d\xi d\eta dx .
\end{align*}
Using the identities $|a||b| \leq |a|^2/2 + |b|^2/2$ and $(a + b)^2 \leq 2 a^2 + 2b^2$ we get
\begin{align}
&||P(u) - P(v)||^2  \notag \\
&\leq \int_D \left( \dfrac{2 C_2}{\epsilon \omega_d} \right)^2 \int_{H_1(0)} \int_{H_1(0)} \dfrac{J(|\xi|)}{|\xi|} \dfrac{J(|\eta|)}{|\eta|} \dfrac{|\xi|^2|S_\xi(u) - S_\xi(v)|^2 + |\eta|^2|S_\eta(u) - S_\eta(v)|^2}{2} d\xi d\eta dx \notag \\
&= 2 \int_D \left( \dfrac{2 C_2}{\epsilon \omega_d} \right)^2 \int_{H_1(0)} \int_{H_1(0)} \dfrac{J(|\xi|)}{|\xi|} \dfrac{J(|\eta|)}{|\eta|} \dfrac{|\xi|^2|S_\xi(u) - S_\xi(v)|^2}{2} d\xi d\eta dx \notag \\
&= \int_D \left( \dfrac{2 C_2}{\epsilon \omega_d} \right)^2 \omega_d \bar{J}_1 \int_{H_1(0)} \dfrac{J(|\xi|)}{|\xi|} |\xi|^2\dfrac{2|u(x+\epsilon \xi) - v(x+\epsilon \xi)|^2 + 2|u(x) - v(x)|^2}{\epsilon^2 |\xi|^2} d\xi  dx \notag \\
&= \left( \dfrac{2 C_2}{\epsilon \omega_d} \right)^2 \omega_d \bar{J}_1 \int_{H_1(0)} \dfrac{J(|\xi|)}{|\xi|} \dfrac{1}{\epsilon^2} \left[ 2\int_D \left( |u(x+\epsilon \xi) - v(x+\epsilon \xi)|^2 + |u(x) - v(x)|^2\right) dx \right] d\xi \notag \\
&\leq \left( \dfrac{2 C_2}{\epsilon \omega_d} \right)^2 \omega_d \bar{J}_1 \int_{H_1(0)} \dfrac{J(|\xi|)}{|\xi|} \dfrac{1}{\epsilon^2} \left[ 4 ||u-v||^2 \right] d\xi, \label{eq:bd 2}
\end{align}
where we used the symmetry with respect to $\xi$ and $\eta$ in second equation. This gives
\begin{align}
||P(u) - P(v)|| &\leq \dfrac{L_1}{\epsilon^2} ||u-v|| \leq  \dfrac{L_1}{\epsilon^2} ||u-v||_2, \label{eq:lipsch 2}
\end{align}
where 
\begin{align}
L_1 := 4 C_2 \bar{J}_1\label{L1def} .
\end{align}

\paragraph{Estimating $||\nabla P(u) - \nabla P(v)||$} From (\ref{eq:nabla p u}), we have
\begin{align*}
||\nabla P(u) - \nabla P(v)|| &\leq ||g_1(u) - g_1(v)|| + ||g_2(u) - g_2(v)||.
\end{align*}
Using $|\bar{\omega}_\xi(x)| \leq 1$, we get
\begin{align}
& |g_1(u)(x) - g_1(v)(x)| \notag \\
&\leq \dfrac{2}{\epsilon \omega_d} \int_{H_1(0)} J(|\xi|) |F_1''(\sqrt{s_\xi} S_\xi(u)) \nabla S_\xi(u) - F_1''(\sqrt{s_\xi} S_\xi(v)) \nabla S_\xi(v) | d\xi \notag \\
&\leq \dfrac{2}{\epsilon \omega_d} \int_{H_1(0)} J(|\xi|) |F_1''(\sqrt{s_\xi} S_\xi(u)) - F_1''(\sqrt{s_\xi} S_\xi(v))| |\nabla S_\xi(u)| d\xi \notag \\
&\: + \dfrac{2}{\epsilon \omega_d} \int_{H_1(0)} J(|\xi|) |F_1''(\sqrt{s_\xi} S_\xi(v))| |\nabla S_\xi(u) - \nabla S_\xi(v)| d\xi \notag \\
&\leq \dfrac{2 C_3}{\epsilon \omega_d} \int_{H_1(0)} J(|\xi|) \sqrt{s_\xi} |S_\xi(u) - S_\xi(v)| |\nabla S_\xi(u)| d\xi \notag \\
&\: + \dfrac{2C_2}{\epsilon \omega_d} \int_{H_1(0)} J(|\xi|) |\nabla S_\xi(u) - \nabla S_\xi(v)| d\xi \notag \\
&= I_1(x) + I_2(x) \label{eq:bd 3}
\end{align}
where we denote first and second term as $I_1(x)$ and $I_2(x)$. Proceeding similar to (\ref{eq:bd 2}), we can show
\begin{align}
||I_1||^2 &= \int_D \left(\dfrac{2 C_3}{\epsilon \omega_d} \right)^2 \int_{H_1(0)} \int_{H_1(0)} \dfrac{J(|\xi|)}{|\xi|^{3/2}} \dfrac{J(|\eta|)}{|\eta|^{3/2}} |\xi|^{3/2} |\eta|^{3/2} \sqrt{s_\xi} \sqrt{s_\eta} \notag \\
&\: \times |S_\xi(u) - S_\xi(v)| |\nabla S_\xi(u)| |S_\eta(u) - S_\eta(v)| |\nabla S_\eta(u)| d\xi d\eta dx \notag \\
&\leq \int_D \left(\dfrac{2 C_3}{\epsilon \omega_d} \right)^2 \omega_d \bar{J}_{3/2} \int_{H_1(0)} \dfrac{J(|\xi|)}{|\xi|^{3/2}} |\xi|^3 s_\xi |S_\xi(u) - S_\xi(v)|^2 |\nabla S_\xi(u)|^2 d\xi dx. \label{eq:bd 4}
\end{align}
Now
\begin{align*}
& \int_D |S_\xi(u) - S_\xi(v)|^2 |\nabla S_\xi(u)|^2 dx \notag \\
&\leq \dfrac{4 ||u-v||^2_\infty}{\epsilon^2 |\xi|^2} \dfrac{1}{\epsilon^2 |\xi|^2} \int_D 2(|\nabla u(x + \epsilon \xi)|^2 + |\nabla u(x)|^2) dx \notag \\
&\leq \dfrac{16 ||\nabla u||^2 ||u-v||^2_\infty}{\epsilon^4 |\xi|^4}.
\end{align*}
By Sobolev embedding property, see (\ref{eq:sob embedd 1}), we have $||u - v||_\infty \leq C_{e_1} ||u-v||_2$. Thus, we get
\begin{align*}
 \int_D |S_\xi(u) - S_\xi(v)|^2 |\nabla S_\xi(u)|^2 dx \leq \dfrac{16 C_{e_1}^2 ||\nabla u||^2 ||u-v||^2_2}{\epsilon^4 |\xi|^4}.
\end{align*}
Substituting above in (\ref{eq:bd 4}) to get
\begin{align*}
||I_1||^2 &\leq \left(\dfrac{2 C_3}{\epsilon \omega_d} \right)^2 \omega_d \bar{J}_{3/2} \int_{H_1(0)} \dfrac{J(|\xi|)}{|\xi|^{3/2}} |\xi|^3 \epsilon |\xi| \dfrac{16 C_{e_1}^2 ||u||^2_2}{\epsilon^4 |\xi|^4} ||u-v||^2_2 d\xi \notag \\
&= \left(\dfrac{8 C_3 C_{e_1} \bar{J}_{3/2} ||u||_2}{\epsilon^{5/2}} \right)^2 ||u-v||^2_2.
\end{align*}
Let $L_2 = 8 C_3 C_{e_1} \bar{J}_{3/2}$ to write
\begin{align}
||I_1|| &\leq \dfrac{L_2 (||u||_2 + ||v||_2)}{\epsilon^{5/2}} ||u-v||_2. \label{eq:bd 5}
\end{align}

Similarly
\begin{align*}
||I_2||^2 &= \int_D \left( \dfrac{2C_2}{\epsilon \omega_d} \right)^2 \int_{H_1(0)} \int_{H_1(0)} \dfrac{J(|\xi|)}{|\xi|} \dfrac{J(|\eta|)}{|\eta|} |\xi||\eta| \notag \\
&\: \times |\nabla S_\xi(u) - \nabla S_\xi(v)| |\nabla S_\eta(u) - \nabla S_\eta(v)| d\xi d\eta dx \notag \\
&\leq \left( \dfrac{2C_2}{\epsilon \omega_d} \right)^2 \omega_d \bar{J}_1 \int_{H_1(0)} \dfrac{J(|\xi|)}{|\xi|} |\xi|^2 \left[ \int_D |\nabla S_\xi(u) - \nabla S_\xi(v)|^2 dx \right] d\xi.
\end{align*}
This gives
\begin{align}
||I_2|| &\leq \dfrac{4 C_2 \bar{J}_1}{\epsilon^2} ||u-v||_2 = \dfrac{L_1}{\epsilon^2} ||u-v||_2. \label{eq:bd 6}
\end{align}
Thus
\begin{align}
||g_1(u) - g_1(v)|| &\leq \dfrac{\sqrt{\epsilon} L_1 + L_2(||u||_2 + ||v||_2)}{\epsilon^{5/2}} ||u-v||_2. \label{eq:bd 7}
\end{align}

We now work on $|g_2(u) (x) - g_2(v)(x)|$, see (\ref{eq:nabla p u}). Noting the bound on $\nabla \bar{\omega}_\xi$, we get
\begin{align}
& |g_2(u) (x) - g_2(v)(x)| \notag \\
&= \left| \dfrac{2}{\epsilon \omega_d} \int_{H_1(0)} J(|\xi|) \left[ \dfrac{F_1'(\sqrt{s_\xi} S_\xi(u))}{\sqrt{s_\xi}} - \dfrac{F_1'(\sqrt{s_\xi} S_\xi(v))}{\sqrt{s_\xi}} \right] e_\xi \otimes \nabla \bar{\omega}_\xi(s) d\xi \right| \notag \\
&\leq \dfrac{2C_{\omega_1}}{\epsilon \omega_d} \int_{H_1(0)} J(|\xi|) \left| \dfrac{F_1'(\sqrt{s_\xi} S_\xi(u))}{\sqrt{s_\xi}} - \dfrac{F_1'(\sqrt{s_\xi} S_\xi(v))}{\sqrt{s_\xi}} \right| d\xi \notag \\
&\leq \dfrac{2C_{\omega_1}C_2}{\epsilon \omega_d} \int_{H_1(0)} J(|\xi|) |S_\xi(u) - S_\xi(v)| d\xi.
\end{align}
Note that the above inequality is similar to (\ref{eq:bd 1}) and therefore we get 
\begin{align}
||g_2(u) -g_2(v)|| &\leq \dfrac{4C_{\omega_1}C_2 \bar{J}_1}{\epsilon^2} ||u-v||_2 = \dfrac{C_{\omega_1} L_1}{\epsilon^2} ||u-v||_2 \label{eq:bd 8} .
\end{align}
Combining (\ref{eq:bd 7}) and (\ref{eq:bd 8}) to write
\begin{align}
||\nabla P(u) - \nabla P(v)|| &\leq \dfrac{\sqrt{\epsilon} (1+C_{\omega_1})L_1 + L_2(||u||_2 + ||v||_2)}{\epsilon^{5/2}} ||u-v||_2. \label{eq:lipsch 3}
\end{align}

\paragraph{Estimating $||\nabla^2 P(u) - \nabla^2 P(v)||$}From (\ref{eq:nabla 2 p u}), we have
\begin{align}
&||\nabla^2 P(u) - \nabla^2 P(v)|| \notag \\
&\leq ||h_1(u) - h_1(v)|| + ||h_2(u) - h_2(v)|| + ||h_3(u) - h_3(v)|| \notag \\
&\: + ||h_4(u) - h_4(v)|| + ||h_5(u) - h_5(v)||. \label{eq:bd 12}
\end{align}
We can show, using the fact $|\bar{\omega}_\xi(x)| \leq 1$ and $|F_1''(r_1) - F_1''(r_2)| \leq C_3 |r_1 - r_2|$, that
\begin{align}
|h_1(u)(x) - h_1(v)(x)| &\leq \dfrac{2C_3}{\epsilon \omega_d} \int_{H_1(0)} J(|\xi|) \sqrt{s_\xi} |S_\xi(u) - S_\xi(v)| |S_\xi(\nabla^2 u)| d\xi \notag \\
&\: + \dfrac{2C_2}{\epsilon \omega_d} \int_{H_1(0)} J(|\xi|)  |S_\xi(\nabla^2 u) - S_\xi(\nabla^2 v)| d\xi \notag \\
&= I_3(x) + I_4(x). \label{eq:bd 13}
\end{align}
Following similar steps used above we can show 
\begin{align}
||I_3|| &\leq \dfrac{8 C_3 C_{e_1} \bar{J}_{3/2} ||u||_2}{\epsilon^{5/2}} ||u-v||_2 \leq \dfrac{ L_2 (||u||_2 + ||v||_2)}{\epsilon^{5/2}} ||u-v||_2 \label{eq:bd 14}
\end{align}
and
\begin{align}
||I_4|| &\leq \dfrac{4C_2 \bar{J}_1}{\epsilon^2} ||u-v||_2 = \dfrac{L_1}{\epsilon^2} ||u-v||_2, \label{eq:bd 15}
\end{align}
where $L_1 = 4 C_2 \bar{J}_1, L_2 = 8C_3 C_{e_1} \bar{J}_{3/2}$.

Next we focus on $|h_2(u)(x) - h_2(v)(x)|$. We have
\begin{align}
&|h_2(u)(x) - h_2(v)(x)| \notag \\
&\leq \dfrac{2}{\epsilon \omega_d} \int_{H_1(0)} J(|\xi|)\sqrt{s_\xi} |F_1'''(\sqrt{s_\xi} S_\xi(u)) - F_1'''(\sqrt{s_\xi} S_\xi(v))| |S_\xi(\nabla u)|^2 d\xi \notag \\
&\: + \dfrac{2}{\epsilon \omega_d} \int_{H_1(0)} J(|\xi|)\sqrt{s_\xi} |F_1'''(\sqrt{s_\xi} S_\xi(v))| |S_\xi(\nabla u) \otimes S_\xi(\nabla u) - S_\xi(\nabla v) \otimes S_\xi(\nabla v)| d\xi \notag \\
&\leq \dfrac{2C_4}{\epsilon \omega_d} \int_{H_1(0)} J(|\xi|) s_\xi |S_\xi(u) - S_\xi(v)| |S_\xi(\nabla u)|^2 d\xi \notag \\
&\: + \dfrac{2C_3}{\epsilon \omega_d} \int_{H_1(0)} J(|\xi|)\sqrt{s_\xi} |S_\xi(\nabla u) \otimes S_\xi(\nabla u) - S_\xi(\nabla v) \otimes S_\xi(\nabla v)| d\xi \notag \\
&= I_5(x) + I_6(x). \label{eq:bd 16}
\end{align}
Proceeding as below for $||I_5||^2$ 
\begin{align}
&||I_5||^2  \notag \\
&\leq \int_D \left( \dfrac{2C_4}{\epsilon \omega_d} \right)^2 \int_{H_1(0)} \int_{H_1(0)} \dfrac{J(|\xi|)}{|\xi|^2} \dfrac{J(|\eta|)}{|\eta|^2} |\xi|^2 s_\xi |\eta|^2 s_\eta \notag \\
&\: \times |S_\xi(u) - S_\xi(v)| |S_\xi(\nabla u)|^2 |S_\eta(u) - S_\eta(v)| |S_\eta(\nabla u)|^2 d\xi d\eta dx \notag \\
&\leq \int_D \left( \dfrac{2C_4}{\epsilon \omega_d} \right)^2 \omega_d \bar{J}_2 \int_{H_1(0)} \dfrac{J(|\xi|)}{|\xi|^2} |\xi|^4 s_\xi^2 |S_\xi(u) - S_\xi(v)|^2 |S_\xi(\nabla u)|^4 d\xi dx \notag \\
&\leq \left( \dfrac{2C_4}{\epsilon \omega_d} \right)^2 \omega_d \bar{J}_2 \int_{H_1(0)} \dfrac{J(|\xi|)}{|\xi|^2} |\xi|^4 s_\xi^2 \dfrac{4 ||u-v||^2_\infty}{\epsilon^2 |\xi|^2} \left[ \int_D |S_\xi(\nabla u)|^4 dx \right] d\xi. \label{eq:bd 16.1}
\end{align}

We estimate the term in square bracket. Using the identity $(|a| + |b|)^4 \leq (2 |a|^2 + 2|b|^2)^2 \leq 8 |a|^4 + 8|b|^4$, we have
\begin{align}
\int_D |S_\xi(\nabla u)|^4 dx &\leq \dfrac{8}{\epsilon^4 |\xi|^4} \int_D (|\nabla u(x+ \epsilon \xi)|^4 + |\nabla u(x)|^4) dx \notag \\
&\leq \dfrac{16}{\epsilon^4 |\xi|^4} ||\nabla u||^4_{L^4(D;\R^{d\times d})}.
\end{align}
where $||u||_{L^4(D,\R^d)} = \left[\int_D |u|^4 dx \right]^{1/4}$. Using the Sobolev embedding property of $u\in H^2(D;\R^d)$ as mentioned in (\ref{eq:sob embedd 2}), we get
\begin{align}
\int_D |S_\xi(\nabla u)|^4 dx &\leq \dfrac{16}{\epsilon^4 |\xi|^4} C_{e_2}^4 ||\nabla u||^4_{H^1(D;\R^{d\times d})} \leq \dfrac{16 C^4_{e_2}}{\epsilon^4 |\xi|^4} ||u||^4_2. \label{eq:bd 17}
\end{align}
Using $||u - v||_\infty \leq C_{e_1}||u-v||_2$ and above estimate in (\ref{eq:bd 16.1}) to have
\begin{align*}
||I_5||^2 &\leq \left( \dfrac{2C_4}{\epsilon \omega_d} \right)^2 \omega_d \bar{J}_2 \int_{H_1(0)} \dfrac{J(|\xi|)}{|\xi|^2} |\xi|^4 s_\xi^2 \dfrac{4 C^2_{e_1} ||u-v||^2_2}{\epsilon^2 |\xi|^2} \dfrac{16 C^4_{e_2}}{\epsilon^4 |\xi|^4} ||u||^4_2 d\xi,
\end{align*}
and we obtain
\begin{align}
||I_5|| &\leq \dfrac{16 C_4 C_{e_1} C^2_{e_2} \bar{J}_2 ||u||^2_2}{\epsilon^3} ||u-v||_2 \leq \dfrac{L_3 (||u||_2+||v||_2)^2}{\epsilon^3} ||u-v||_2 \label{eq:bd 18}
\end{align}
where we let $L_3 = 16 C_4 C_{e_1} C^2_{e_2} \bar{J}_2$.

Next we use
\begin{align*}
|S_\xi(\nabla u) \otimes S_\xi(\nabla u) - S_\xi(\nabla v) \otimes S_\xi(\nabla v)| \leq (|S_\xi(\nabla u)| + |S_\xi(\nabla v)|) |S_\xi(\nabla u) - S_\xi(\nabla v)|
\end{align*}
to estimate $||I_6||$ as follows
\begin{align}
&||I_6||^2 \notag \\
&\leq \int_D \left( \dfrac{2C_3}{\epsilon \omega_d} \right)^2 \int_{H_1(0)} \int_{H_1(0)} \dfrac{J(|\xi|)}{|\xi|^{3/2}} \dfrac{J(|\eta|)}{|\eta|^{3/2}} |\xi|^{3/2} |\eta|^{3/2}\sqrt{s_\xi s_\eta} \notag \\
&\: \times (|S_\xi(\nabla u)| + |S_\xi(\nabla v)|) |S_\xi(\nabla u) - S_\xi(\nabla v)| \notag \\
&\: \times (|S_\eta(\nabla u)| + |S_\eta(\nabla v)|) |S_\eta(\nabla u) - S_\eta(\nabla v)| d\xi d\eta dx \notag \\
&\leq \int_D \left( \dfrac{2C_3}{\epsilon \omega_d} \right)^2 \omega_d \bar{J}_{3/2} \int_{H_1(0)} \dfrac{J(|\xi|)}{|\xi|^{3/2}} |\xi|^3 \epsilon |\xi| (|S_\xi(\nabla u)| + |S_\xi(\nabla v)|)^2 |S_\xi(\nabla u) - S_\xi(\nabla v)|^2 d\xi dx \notag \\
&= \left( \dfrac{2C_3}{\epsilon \omega_d} \right)^2 \omega_d \bar{J}_{3/2} \int_{H_1(0)} \dfrac{J(|\xi|)}{|\xi|^{3/2}} |\xi|^3 \epsilon |\xi| \left[ \int_D (|S_\xi(\nabla u)| + |S_\xi(\nabla v)|)^2 |S_\xi(\nabla u) - S_\xi(\nabla v)|^2 dx \right] d\xi. \label{eq:bd 19}
\end{align}
We focus on the term in square bracket. Using the H\"older inequality, we have
\begin{align}
& \int_D (|S_\xi(\nabla u)| + |S_\xi(\nabla v)|)^2 |S_\xi(\nabla u) - S_\xi(\nabla v)|^2 dx \notag \\
&\leq \left( \int_D (|S_\xi(\nabla u)| + |S_\xi(\nabla v)|)^4 dx \right)^{1/2} \left( \int_D |S_\xi(\nabla u) - S_\xi(\nabla v)|^4 dx \right)^{1/2} .\label{eq:bd 20}
\end{align}
Using $(|a| + |b|)^4 \leq 8 |a|^4 + 8|b|^4$, we get
\begin{align}
&\int_D (|S_\xi(\nabla u)| + |S_\xi(\nabla v)|)^4 dx \notag \\
&\leq 8 \left[ \int_D |S_\xi(\nabla u)|^4 dx + \int_D |S_\xi(\nabla v)|^4 dx \right] \notag \\
&\leq 8 \left[ \dfrac{8}{\epsilon^4 |\xi|^4} \int_D (|\nabla u(x+\epsilon \xi)|^4 + |\nabla u(x)|^4) dx +\dfrac{8}{\epsilon^4 |\xi|^4} \int_D (|\nabla v(x+\epsilon \xi)|^4 + |\nabla v(x)|^4) dx  \right] \notag \\
&\leq \dfrac{128}{\epsilon^4 |\xi|^4} (||\nabla u||^4_{L^4(D;\R^{d\times d})} + ||\nabla v||^4_{L^4(D;\R^{d\times d})} ) \notag \\
&\leq \dfrac{128 C^4_{e_2}}{\epsilon^4 |\xi|^4} (||\nabla u||^4_{H^1(D;\R^{d\times d})} + ||\nabla v||^4_{H^1(D;\R^{d\times d})} ) \notag \\
&\leq \dfrac{128 C^4_{e_2}}{\epsilon^4 |\xi|^4} (||u||^4_2 + ||v||^4_2) \notag \\
&\leq \dfrac{128 C^4_{e_2}}{\epsilon^4 |\xi|^4} (||u||_2 + ||v||_2)^4. \label{eq:bd 21}
\end{align}
where we used Sobolev embedding property (\ref{eq:sob embedd 2}) in third last step. Proceeding similarly to get
\begin{align}
& \int_D |S_\xi(\nabla u) - S_\xi(\nabla v)|^4 dx \notag \\
&\leq \dfrac{8}{\epsilon^4 |\xi|^4} \left[ \int_D |\nabla (u-v)(x+\epsilon \xi)|^4 dx + \int_D |\nabla (u-v)(x)|^4 dx \right] \notag \\
&\leq \dfrac{16}{\epsilon^4 |\xi|^4} ||\nabla (u-v)||^4_{L^4(D,\R^{d\times d})} \notag \\
&\leq \dfrac{16 C^4_{e_2}}{\epsilon^4 |\xi|^4} ||u-v||^4_2. \label{eq:bd 22}
\end{align}
Substituting (\ref{eq:bd 21}) and (\ref{eq:bd 22}) into (\ref{eq:bd 20}) to get
\begin{align*}
& \int_D (|S_\xi(\nabla u)| + |S_\xi(\nabla v)|)^2 |S_\xi(\nabla u) - S_\xi(\nabla v)|^2 dx \notag \\
&\leq \left( \dfrac{128 C^4_{e_2}}{\epsilon^4 |\xi|^4} (||u||_2 + ||v||_2 )^4 \right)^{1/2} \left( \dfrac{16 C^4_{e_2}}{\epsilon^4 |\xi|^4} ||u-v||^4_2\right)^{1/2} \notag \\
&= \dfrac{32 \sqrt{2} C^4_{e_2}}{\epsilon^4 |\xi|^4} (||u||_2 +||v||_2)^2 ||u-v||_2^2 \notag \\
&\leq \dfrac{64 C^4_{e_2}}{\epsilon^4 |\xi|^4} (||u||_2 +||v||_2)^2 ||u-v||_2^2 .
\end{align*}
Substituting above in (\ref{eq:bd 19}) to get
\begin{align*}
&||I_6||^2 \notag \\ 
& \leq \left( \dfrac{2C_3}{\epsilon \omega_d} \right)^2 \omega_d \bar{J}_{3/2} \int_{H_1(0)} \dfrac{J(|\xi|)}{|\xi|^{3/2}} |\xi|^3 \epsilon |\xi| \left[ \dfrac{64 C^4_{e_2}}{\epsilon^4 |\xi|^4} (||u||_2 +||v||_2)^2 ||u-v||_2^2 \right] d\xi.
\end{align*}
From above we have
\begin{align}
||I_6|| &\leq \dfrac{16 C_3 C_{e_2}^2 \bar{J}_{3/2} (||u||_2 + ||v||_2)}{\epsilon^{5/2}} ||u-v||_2 = \dfrac{L_4 (||u||_2 + ||v||_2)}{\epsilon^{5/2}} ||u-v||_2, \label{eq:bd 23}
\end{align}
where we let $L_4 = 16 C_3 C_{e_2}^2 \bar{J}_{3/2}$.

From the expression of $h_3(u)(x)$ and $h_5(u)(x)$ we find that it is similar to term $g_1(u)(x)$ from the point of view of $L^2$ norm. Also, $h_4(u)(x)$ is similar to $P(u)(x)$. We easily have
\begin{align*}
|h_4(u)(x) - h_4(v)(x)| &\leq  \dfrac{2 C_2 C_{\omega_2}}{\epsilon \omega_d} \int_{H_1(0)} J(|\xi|) |S_\xi(u) - S_\xi(v)| d\xi,
\end{align*}
where we used the fact that $|\nabla^2 \bar{\omega}_\xi(x)| \leq C_{\omega_2}$. Above is similar to the bound on $|P(u)(x) - P(v)(x)|$, see (\ref{eq:bd 1}), therefore we have
\begin{align}
||h_4(u) - h_4(v)|| &\leq \dfrac{L_1 C_{\omega_2}}{\epsilon^2} ||u-v||_2. \label{eq:bd 24}
\end{align}
Similarly, we have
\begin{align}
&|h_3(u)(x) - h_3(v)(x)| \notag \\
&\leq \dfrac{2}{\epsilon \omega_d} \int_{H_1(0)} J(|\xi|) |F_1''(\sqrt{s_\xi} S_\xi(u)) - F_1''(\sqrt{s_\xi} S_\xi(v))| |\nabla \bar{\omega}_\xi(x)| |S_\xi(\nabla u)| d\xi \notag \\
&\: + \dfrac{2}{\epsilon \omega_d} \int_{H_1(0)} J(|\xi|) |F_1''(\sqrt{s_\xi} S_\xi(v))| |e_\xi \otimes \nabla \bar{\omega}_\xi(x) \otimes S_\xi(\nabla u) - e_\xi \otimes \nabla \bar{\omega}_\xi(x) \otimes S_\xi(\nabla v)| d\xi \notag \\
&\leq \dfrac{2 C_3 C_{\omega_1}}{\epsilon \omega_d} \int_{H_1(0)} J(|\xi|) \sqrt{s_\xi} |S_\xi(u) - S_\xi(v)| |S_\xi(\nabla u)| d\xi \notag \\
&\: + \dfrac{2 C_2 C_{\omega_1}}{\epsilon \omega_d} \int_{H_1(0)} J(|\xi|) |S_\xi(\nabla u) - S_\xi(\nabla v)| d\xi \notag \\
&= C_{\omega_1} (I_1(x) + I_2(x)),
\end{align}
where $I_1(x)$ and $I_2(x)$ are given by (\ref{eq:bd 3}). From (\ref{eq:bd 5}) and (\ref{eq:bd 6}), have
\begin{align}
||h_3(u) - h_3(v)|| &\leq C_{\omega_1} (||I_1|| + ||I_2||) \notag \\
&\leq \dfrac{\sqrt{\epsilon} C_{\omega_1} L_1 + C_{\omega_1} L_2(||u||_2 + ||v||_2)}{\epsilon^{5/2}} ||u-v||_2. \label{eq:bd 25}
\end{align}
Expression of $h_3(u)$ and $h_5(u)$ is similar and hence we have
\begin{align}
||h_5(u) - h_5(v)|| &\leq C_{\omega_1} (||I_1|| + ||I_2||) \notag \\
&\leq \dfrac{\sqrt{\epsilon} C_{\omega_1} L_1 + C_{\omega_1} L_2(||u||_2 + ||v||_2)}{\epsilon^{5/2}} ||u-v||_2. \label{eq:bd 26}
\end{align}

We collect results to get
\begin{align}
&||\nabla^2 P(u) - \nabla^2 P(v)|| \notag \\
&\leq \left[ \dfrac{ \epsilon L_1 + \sqrt{\epsilon} L_2 (||u||_2 + ||v||_2) + L_3 (||u||_2 + ||v||_2)^2 + \sqrt{\epsilon} L_4 (||u||_2 + ||v||_2) }{\epsilon^3} \right. \notag \\
&\quad \left.+ \dfrac{\epsilon C_{\omega_2} L_1 + 2\epsilon C_{\omega_1} L_1 +  2\sqrt{\epsilon}C_{\omega_1} L_2(||u||_2 + ||v||_2) }{\epsilon^3} \right] ||u-v||_2 \notag \\
&\leq \left[ \dfrac{\epsilon(1+ 2C_{\omega_1} + C_{\omega_2}) L_1 + \sqrt{\epsilon}(L_2 + 2C_{\omega_1}L_2 + L_4)(||u||_2 + ||v||_2)}{\epsilon^3} \right. \notag \\
&\quad \left. + \dfrac{L_3(||u||_2 + ||v||_2)^2 }{\epsilon^3} \right] ||u-v||_2. \label{eq:lipsch 4}
\end{align}
%

We now combine (\ref{eq:lipsch 2}), (\ref{eq:lipsch 3}), and (\ref{eq:lipsch 4}), to get
\begin{align}
&||P(u) - P(v)||_2 \notag \\
&\leq \left[ \dfrac{2 \epsilon L_1 + \epsilon(1+C_{\omega_1})L_1 + \sqrt{\epsilon}(||u||_2 + ||v||_2)}{\epsilon^3} \right. \notag \\
&\quad +\dfrac{\epsilon(1+ 2C_{\omega_1} + C_{\omega_2}) L_1 + \sqrt{\epsilon}(L_2 + 2C_{\omega_1}L_2 + L_4)(||u||_2 + ||v||_2)}{\epsilon^3} \notag \\
&\quad \left. + \dfrac{L_3(||u||_2 + ||v||_2)^2 }{\epsilon^3} \right] ||u-v||_2.
\end{align}
We let
\begin{align}
\bar{L}_1 := (4 + 3C_{\omega_1} + C_{\omega_2})L_1, \: \bar{L}_2 := (1+ 2C_{\omega_1})L_2 + L_4, \: \bar{L}_3 := L_3  \label{eq:lipsch const def H2}
\end{align}
to write
\begin{align}
&||P(u) - P(v)||_2 \notag \\
&\leq \dfrac{\bar{L}_1 + \bar{L}_2 (||u||_2 + ||v||_2) + \bar{L}_3(||u||_2 + ||v||_2)^2}{\epsilon^3} ||u-v||_2
\end{align}
and this completes the proof of (\ref{eq:lip of f in H2}).

We now bound the peridynamic force. Note that $F_1'(0) = 0$, and $S_\xi(v) = 0$ if $v=0$. Substituting $v=0$ in (\ref{eq:lipsch 2}) to get
\begin{align}
||P(u)||&\leq \dfrac{L_1}{\epsilon^2} ||u||_2. \label{eq:bd f 1}
\end{align}
For $||g_1(u)||$ and $||g_2(u)||$ we proceed differently. For $||g_2(u)||$, we substitute $v=0$ in (\ref{eq:bd 8}) to get
\begin{align}
||g_2(u)|| &\leq \dfrac{C_{\omega_1} L_1}{\epsilon^2} ||u||_2. \label{eq:bd f 2}
\end{align}
For $||g_1(u)||$, we proceed as follows
\begin{align}
|g_1(u)(x)| &\leq \dfrac{2C_2}{\epsilon \omega_d} \int_{H_1(0)} J(|\xi|) |\nabla S_\xi(u)| d\xi \notag \\
&\leq \dfrac{2C_2}{\epsilon^2 \omega_d} \int_{H_1(0)} \dfrac{J(|\xi|)}{|\xi|} (|\nabla u(x+\epsilon\xi)| + |\nabla u(x)|) d\xi,
\end{align}
and we have
\begin{align}
||g_1(u)||^2 &\leq \left( \dfrac{2C_2}{\epsilon^2 \omega_d} \right)^2 \omega_d \bar{J}_1 \int_{H_1(0)} \dfrac{J(|\xi|)}{|\xi|} \left[ \int_D (|\nabla u(x+\epsilon\xi)| + |\nabla u(x)|)^2 dx \right] d\xi \notag \\
&\leq \left( \dfrac{4 C_2 \bar{J}_1}{\epsilon^2} \right)^2 ||\nabla u||^2,
\end{align}
i.e.
\begin{align}
||g_1(u)|| &\leq \dfrac{L_1}{\epsilon^2} ||u||_2. \label{eq:bd f 3}
\end{align}
Combining (\ref{eq:bd f 2}) and (\ref{eq:bd f 3}) gives
\begin{align}
||\nabla P(u)|| &\leq \dfrac{(1 + C_{\omega_1}) L_1}{\epsilon^2} ||u||_2. \label{eq:bd f 4}
\end{align}

We now estimate $||\nabla^2 P(u)||$ from above. We have from (\ref{eq:nabla 2 p u})
\begin{align*}
||\nabla^2 P(u)|| &\leq ||h_1(u)|| + ||h_2(u)|| + ||h_3(u)|| + ||h_4(u)||+ ||h_5(u)||.
\end{align*}
From expression of $h_1(u)$ we find that
\begin{align*}
||h_1(u)|| &\leq \dfrac{4C_2 \bar{J}_1}{\epsilon^2} ||u||_2 = \dfrac{L_1}{\epsilon^2} ||u||_2
\end{align*}
Bound on $||h_2(u)||$ is similar to $I_6$, see (\ref{eq:bd 16}), and we have
\begin{align*}
||h_2(u)|| \leq \dfrac{8C_3 C^2_{e_2} \bar{J}_{3/2}}{\epsilon^{5/2}} ||u||^2_2 \leq \dfrac{L_4}{\epsilon^{5/2}} ||u||_2^2,
\end{align*}
where $L_4 = 16C_3 C^2_{e_2} \bar{J}_{3/2}$. Case of $||h_3(u)||$ and $||h_5(u)||$ is similar to $||g_1(u)||$, and case of $||h_4(u)||$ is similar to $||P(u)||$.  We thus have
\begin{align*}
||h_4(u)|| &\leq \dfrac{C_{\omega_2} L_1}{\epsilon^2} ||u||_2 
\end{align*}
and
\begin{align*}
||h_3(u)|| \leq \dfrac{C_{\omega_1}L_1}{\epsilon^2} ||u||_2 \quad \text{ and } \quad ||h_5(u)|| \leq \dfrac{C_{\omega_1}L_1}{\epsilon^2} ||u||_2.
\end{align*}
Combining above to get
\begin{align}
||\nabla^2 P(u)|| &\leq \dfrac{\sqrt{\epsilon}(1+ C_{\omega_2} + 2C_{\omega_1})L_1 + L_4 ||u||_2}{\epsilon^{5/2}} ||u||_2 \label{eq:bd f 5}.
\end{align}
Finally, after combining (\ref{eq:bd f 1}), (\ref{eq:bd f 4}), and (\ref{eq:bd f 5}), we get
\begin{align*}
||P(u)||_2 &\leq \dfrac{\sqrt{\epsilon}(4 + 3C_{\omega_1} + C_{\omega_2})L_1 + L_4||u||_2 }{\epsilon^{5/2}} ||u||_2.
\end{align*}
We let
\begin{align}\label{eq:lipsch const def H2 2}
\bar{L}_4 := \bar{L}_1 \qquad \text{and}\qquad \bar{L}_5 := L_4
\end{align}
to write
\begin{align}
||P(u)||_2 &\leq \dfrac{\bar{L}_4||u||_2 + \bar{L}_5||u||^2_2}{\epsilon^{5/2}}.
\end{align}
This completes the proof of (\ref{eq:bound on f in H2}).

\subsection{Local and global existence of solution in $H^2 \cap H^1_0$ space}
\label{section:existence}
In this section, we prove Theorem \ref{thm:existence over finite time domain}. We first prove local existence for a finite time interval. We find that we can choose this time interval  independent of the initial data. We repeat the local existence theorem to uniquely continue the local solution over any finite time interval. 
The existence and uniqueness of local solutions is stated in the following theorem.

\begin{theorem}\label{thm:local existence}
\textbf{Local existence and uniqueness} \\
Given $X= W\times W$, $b(t)\in W$, and  initial data $x_0=(u_0,v_0)\in X$. We suppose that $b(t)$ is continuous in time over some time interval $I_0=(-T,T)$ and satisfies $\sup_{t\in I_0} ||b(t)||_2 < \infty$.  Then, there exists a time interval $I'=(-T',T')\subset I_0$ and unique solution $y =(y^1,y^2) $ such that $y\in C^1(I';X)$ and

\begin{equation}
y(t)=x_0+\int_0^tF^\epsilon(y(\tau),\tau)\,d\tau,\hbox{  for $t\in I'$}
\label{8loc}
\end{equation}

or equivalently

\begin{equation*}
y'(t)=F^\epsilon(y(t),t),\hbox{with    $y(0)=x_0$},\hbox{  for $t\in I'$}
\label{11loc}
\end{equation*}
where $y(t)$ and $y'(t)$ are Lipschitz continuous in time for $t\in I'\subset I_0$.
\end{theorem}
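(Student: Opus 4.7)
The plan is to apply the Banach fixed-point theorem to the integral operator naturally associated with the first-order system, working in the Banach space $C(I';X)$ of continuous $X$-valued functions on $I' = [-T',T']$ equipped with the supremum norm, for some $T' \leq T$ to be chosen. I would define
\[
(\mathcal{T}y)(t) := x_0 + \int_0^t F^\epsilon(y(\tau),\tau)\,d\tau
\]
and seek a fixed point of $\mathcal{T}$ inside the closed ball
\[
B_R := \{ y \in C(I';X) : \sup_{t\in I'} ||y(t)-x_0||_X \leq R \}
\]
for a suitably chosen radius $R>0$.

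For the self-map property, any $y\in B_R$ satisfies $\sup_{t\in I'}||y(t)||_X \leq M := ||x_0||_X+R$. Using $F^\epsilon_1(y,\tau)=y_2$ and the bound \eqref{eq:bound on f in H2} for $-\nabla PD^\epsilon$ in $W$, one obtains
\[
||F^\epsilon(y(\tau),\tau)||_X \leq M + \frac{\bar{L}_4 M + \bar{L}_5 M^2}{\epsilon^{5/2}} + \sup_{\tau\in I_0}||b(\tau)||_2 =: C_M,
\]
so $||(\mathcal{T}y)(t) - x_0||_X \leq T'C_M$, and choosing $T'\leq R/C_M$ yields $\mathcal{T}(B_R)\subset B_R$. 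For the contraction property, given $y,z\in B_R$ the local Lipschitz estimate \eqref{eq:lip of f in H2} together with $F^\epsilon_1(y,\tau)-F^\epsilon_1(z,\tau)=y_2-z_2$ delivers
\[
||F^\epsilon(y(\tau),\tau)-F^\epsilon(z(\tau),\tau)||_X \leq L_M\,||y(\tau)-z(\tau)||_X
\]
with $L_M := 1 + (\bar{L}_1 + 2\bar{L}_2 M + 4\bar{L}_3 M^2)/\epsilon^3$. Integrating and taking the supremum gives $||\mathcal{T}y-\mathcal{T}z||_\infty \leq T'L_M\,||y-z||_\infty$, so a further restriction $T' < 1/L_M$ makes $\mathcal{T}$ a strict contraction; Banach's theorem then produces a unique fixed point $y\in B_R$ satisfying the integral equation (\ref{8loc}).

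Regularity comes for free: continuity of $y$, local Lipschitz continuity of $-\nabla PD^\epsilon$ in $W$, and continuity of $b$ together make $\tau\mapsto F^\epsilon(y(\tau),\tau)$ continuous into $X$, so the fundamental theorem of calculus upgrades $y$ to $C^1(I';X)$ with $y'(t)=F^\epsilon(y(t),t)$. Boundedness of $y'$ on the compact interval $I'$ gives Lipschitz continuity of $y$, and revisiting the equation using \eqref{eq:lip of f in H2} along the Lipschitz curve $t\mapsto y_1(t)$ yields Lipschitz continuity of $y'$ (up to the assumed regularity of $b$ in $t$). The chief obstacle is not the fixed-point argument itself, which is routine, but rather the bookkeeping of the dependence of $L_M$ on $M$ and $\epsilon$: since $L_M$ grows quadratically in $M$ with a prefactor $\epsilon^{-3}$, the admissible time step $T'$ scales like $\epsilon^3/M^2$ and genuinely depends on both $R$ and $\epsilon$. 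This is why only a local statement is available here; extending to all of $I_0$, as in Theorem \ref{thm:existence over finite time domain}, will require iterating the local result finitely many times, combined with an a-priori bound on $||y(t)||_X$ along the evolution.
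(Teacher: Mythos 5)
Your fixed-point argument is correct and proves the local statement, but your ball construction differs from the paper's in a way that actually matters for what comes next, and your closing remark about needing an a-priori bound for the global continuation is a misconception worth correcting. You take $B_R := \{y : \|y-x_0\|_X\leq R\}$ and derive self-map and contraction conditions that force $T' \lesssim \min(R/C_M,\,1/L_M)$, which, since $C_M$ and $L_M$ both grow like $M^2$ with $M=\|x_0\|_X+R$, gives $T'\to 0$ as $R\to\infty$ (and also as $\|x_0\|_X\to\infty$ for fixed $R$). The paper instead fixes a large $R>\|x_0\|_X$, takes $r<\min\{1,R-\|x_0\|_X\}$, and maps into the \emph{smaller} ball $B(x_0,r^2)$: the self-map condition then reads
\begin{equation*}
T'<\frac{r^2}{\frac{\bar L_4 R+\bar L_5 R^2}{\epsilon^{5/2}}+R+\bar b},
\end{equation*}
and since one may take $r^2$ up to $(R-\|x_0\|_X)^2$, the admissible $T'$ is controlled by
\begin{equation*}
\theta(R)=\frac{(R-\|x_0\|_X)^2}{\frac{\bar L_4 R+\bar L_5 R^2}{\epsilon^{5/2}}+R+\bar b},
\end{equation*}
which is \emph{increasing} in $R$ and tends to $\epsilon^{5/2}/\bar L_5>0$ as $R\to\infty$. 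The quadratic numerator is chosen exactly to match the quadratic growth of the Lipschitz/boundedness constant in $R$, so $T'$ can be taken bounded below by a constant (roughly $\epsilon^{5/2}/2\bar L_5$) \emph{independently of the initial data}. That uniformity is not incidental: it is precisely what lets the paper iterate the local result directly to prove Theorem~\ref{thm:existence over finite time domain} with \emph{no} a-priori bound on $\|y(t)\|_X$, contrary to your last paragraph. Your version is a perfectly valid proof of the local theorem on its own, and you are more explicit about the contraction estimate (the paper verifies only the self-map property and defers the contraction/uniqueness to a Brezis-style lemma), but the paper's ball-radius bookkeeping is the ingredient that makes the subsequent global argument clean, so it is worth adopting.
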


\begin{proof}
To prove Theorem \ref{thm:local existence}, we proceed as follows. 
Write $y(t)=(y^1(t),y^2(t))$ with $||y||_X=||y^1(t)||_2+||y^2(t)||_2$. 
Let us consider $R>||x_0||_X$ and define the ball $B(0,R)= \{y\in X:\, ||y||_X<R\}$. 
Let $r < \min \{1, R - ||x_0||_X\}$. We clearly have $r^2 < (R-||x_0||_X)^2$ as well as $r^2 < r < R - ||x_0||_X$. Consider the ball $B(x_0,r^2)$ defined by
\begin{equation}
B(x_0,r^2)=\{y\in X:\,||y-x_0||_X<r^2\}.
\label{balls}
\end{equation}
Then we have $B(x_0, r^2) \subset B(x_0, r) \subset B(0, R)$, see Fig \ref{figurenested}. 

To recover the existence and uniqueness we introduce the transformation
\begin{equation*}
S_{x_0}(y)(t)=x_0+\int_0^tF^\epsilon(y(\tau),\tau)\,d\tau.
\label{0}
\end{equation*}
Introduce $0<T'<T$ and the associated set $Y(T')$ of functions in $W$ taking values in $B(x_0,r^2)$ for $I'=(-T',T')\subset I_0=(-T,T)$. 
The goal is to find appropriate interval $I'=(-T',T')$ for which $S_{x_0}$ maps into the corresponding set $Y(T')$. Writing out the transformation with $y(t)\in Y(T')$ gives
\begin{eqnarray}
&&S_{x_0}^1(y)(t)=x_0^1+\int_0^t y^2(\tau)\,d\tau\label{1}\\
&&S_{x_0}^2(y)(t)=x_0^2+\int_0^t(-\nabla PD^\epsilon(y^1(\tau))+b(\tau))\,d\tau.\label{2}
\end{eqnarray}
We have from (\ref{1})
\begin{eqnarray}
||S_{x_0}^1(y)(t)-x_0^1||_2\leq\sup_{t\in(-T',T')}||y^2(t)||_2 T'\label{4}.
\end{eqnarray}
Using bound on $-\nabla PD^\epsilon$ in Theorem \ref{thm:lip in H2}, we have from (\ref{2})
\begin{eqnarray}
||S_{x_0}^2(y)(t)-x_0^2||_2\leq \int_0^t \left[ \dfrac{\bar{L}_4}{\epsilon^{5/2}} ||y^1(\tau)||_2 + \dfrac{\bar{L}_5}{\epsilon^{5/2}} ||y^1(\tau)||^2_2 + ||b(\tau)||_2 \right] d\tau. \label{3}
\end{eqnarray}
Let $\bar{b} = \sup_{t\in I_0} ||b(t)||_2$. Noting that transformation $S_{x_0}$ is defined for $t\in I' = (-T', T')$ and $y(\tau) = (y^1(\tau), y^2(\tau)) \in B(x_0, r^2) \subset B(0,R)$ as $y \in Y(T')$, we have from (\ref{3}) and (\ref{4})
\begin{align*}
||S_{x_0}^1(y)(t)-x_0^1||_2 &\leq R T' , \\
||S_{x_0}^2(y)(t)-x_0^2||_2 &\leq \left[ \dfrac{\bar{L}_4 R + \bar{L}_5 R^2}{\epsilon^{5/2}} + \bar{b} \right] T'.
\end{align*}
Adding these inequalities delivers
\begin{align}
||S_{x_0}(y)(t)-x_0||_X\leq \left[ \dfrac{\bar{L}_4 R + \bar{L}_5 R^2}{\epsilon^{5/2}} + R + \bar{b} \right] T'. \label{5}
\end{align}
Choosing $T'$ as below 
\begin{align}
T' < \dfrac{r^2}{\left[ \frac{\bar{L}_4 R + \bar{L}_5 R^2}{\epsilon^{5/2}} + R + \bar{b} \right]}
\end{align}
will result in $S_{x_0}(y)\in Y(T')$ for all $y\in Y(T')$ as
\begin{align}
||S_{x_0}(y)(t)-x_0||_X < r^2.
\end{align}

Since $r^2 < (R-||x_0||_X)^2$, we have
\begin{align*}
T' < \dfrac{r^2}{\left[ \frac{\bar{L}_4 R + \bar{L}_5 R^2}{\epsilon^{5/2}} + R + \bar{b} \right]} < \dfrac{(R-||x_0||_X)^2}{\left[ \frac{\bar{L}_4 R + \bar{L}_5 R^2}{\epsilon^{5/2}} + R + \bar{b} \right]}.
\end{align*}
Let $\theta(R)$ be given by
\begin{align}
\theta(R) := \dfrac{(R-||x_0||_X)^2}{\left[ \frac{\bar{L}_4 R + \bar{L}_5 R^2}{\epsilon^{5/2}} + R + \bar{b} \right]}.
\end{align}
$\theta(R)$ is increasing with $R>0$ and satisfies
\begin{align}
\theta_\infty := \lim_{R\to \infty} \theta(R) = \dfrac{\epsilon^{5/2}}{\bar{L}_5}.
\end{align}

So given $R$ and $||x_0||_X$ we choose $T'$ according to 
\begin{align}
\dfrac{\theta(R)}{2} < T' < \theta(R),
\end{align}
and set $I' = (-T', T')$. This way we have shown that for time domain $I'$ the transformation $S_{x_0}(y)(t)$ as defined in \eqref{8loc} maps $Y(T')$ into itself. The required Lipschitz continuity follows from \eqref{eq:lip of f in H2} and existence and uniqueness of solution can be established using an obvious modification of [Theorem VII.3, \cite{MA-Brezis}].
\end{proof}

We now prove Theorem \ref{thm:existence over finite time domain}. From the proof of Theorem \ref{thm:local existence} above, we have a unique local solution over a time domain $(-T',T')$ with $\frac{\theta(R)}{2}<T'$. Since $\theta(R)\nearrow \epsilon^{5/2}/\bar{L}_5$ as $R\nearrow \infty$ we can fix a tolerance $\eta>0$ so that $[(\epsilon^{5/2}/2\bar{L}_5)-\eta]>0$. Then for any initial condition in $W$ and $b=\sup_{t\in[-T,T)}||b(t)||_2$ we can choose $R$ sufficiently large so that $||x_0||_X<R$ and  $0<(\epsilon^{5/2}/2\bar{L}_5)-\eta<T'$. Since choice of $T'$ is independent of initial condition and $R$, we can always find local solutions for time intervals $(-T',T')$ for $T'$ larger than $[(\epsilon^{5/2}/2\bar{L}_5)-\eta]>0$. Therefore we apply the local existence and uniqueness result to uniquely continue local solutions up to an arbitrary time interval $(-T,T)$.

\begin{figure} 
\centering
\begin{tikzpicture}[xscale=0.8,yscale=0.8]
\draw [] (0.0,0.0) circle [radius=3.5];
\draw [thick,dashed] (0.7,1.8) circle [radius=1.5];
\draw [] (0.7,1.8) circle [radius=0.6];
\draw [->,thick] (0.7,1.8) -- (1.1,2.1);
\node [right] at (0.5,1.6) {$x_0$};
\node [below] at (-1.4,1.52) {$B(x_0,r)$};
\node [below] at (0.35,3.1) {$B(x_0,r^2)$};
\node [below] at (0.0,0.0) {$0$};
\draw [->,thick] (0,0) -- (3.5,0.0);
\node [below] at (1.8,0.0) {$R$};
\node [below] at (-2.5,-2.8) {$B(0,R)$};
\end{tikzpicture} 
\caption{Geometry.}
 \label{figurenested}
\end{figure}
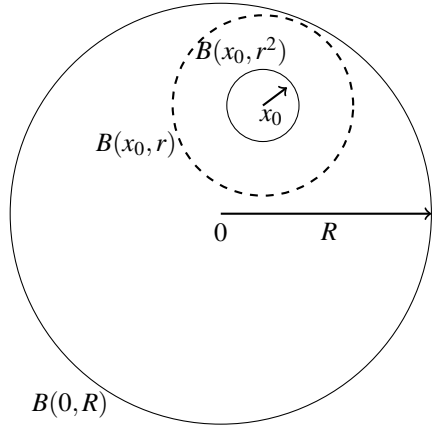

\subsection{Proof of the higher regularity with respect to time}

In this section we prove that the peridynamic evolutions have higher regularity in time for body forces that that are differentiable in time.
To see this we take the time derivative of \eqref{eq:per equation} to get a second order differential equation in time for $v=\dot{u}$ given by
\begin{align}\label{eq:per equation v}
\rho \partial^2_{tt} v(t,x) &= Q(v(t); u(t))(x) + \dot{b}(t,x),
\end{align}
where $Q(v;u)$ is an operator that depends on the solution $u$ of  \eqref{eq:per equation} and acts on $v$. It is given by, $\forall x\in D$,
\begin{align}\label{eq:def Q}
Q(v;u)(x) &:= \dfrac{2}{\epsilon^d \omega_d} \int_{H_{\epsilon}(x)} \partial^2_S W^{\epsilon}(S(y,x;u),y - x) S(y,x;v) \dfrac{y - x}{|y - x|} dy.
\end{align}
Clearly, for given $u$, $Q(v;u)$ acts linearly on $v$ which implies that the equation for $v$ (\ref{eq:per equation v}) is a linear nonlocal equation. The linearity of $Q(v;u)$ implies Lipschitz continuity for $v \in W$ as stated below.

\begin{theorem}\label{thm:Q lip in H2}
\textbf{Lipschitz continuity of $Q$}\\
Let $u \in W$ be any given field. Then for all $v,w \in W$, we have
\begin{align}
||Q(v;u) - Q(w;u)||_2 &\leq \dfrac{L_6(1 + ||u||_2 + ||u||^2_2)}{\epsilon^3} ||v-w||_2\label{eq:lip of Q in H2}
\end{align}
where constant $L_6$ does not depend on $u,v,w$. This gives for all $v\in W$ the upper bound,
\begin{align}
||Q(v;u)||_2 &\leq \dfrac{L_6(1 + ||u||_2 + ||u||^2_2)}{\epsilon^3} ||v||_2 \label{eq:bound on Q in H2}.
\end{align}
\end{theorem}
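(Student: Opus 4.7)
The plan is to reduce \eqref{eq:lip of Q in H2} to \eqref{eq:bound on Q in H2} using linearity, then imitate the proof of Theorem \ref{thm:lip in H2} term by term. Since $\partial^2_S W^\epsilon$ does not depend on $v$ and $S(y,x;v)$ is linear in $v$, the operator $Q(\cdot;u)$ is linear in its first argument, so $Q(v;u)-Q(w;u)=Q(v-w;u)$ and it suffices to prove \eqref{eq:bound on Q in H2}. After the usual rescaling $y=x+\epsilon\xi$ and using $F_1(r)=f(r^2)$, the operator reads
\begin{align*}
Q(v;u)(x)=\dfrac{2}{\epsilon\omega_d}\int_{H_1(0)}\bar{\omega}_\xi(x)J(|\xi|)F_1''(\sqrt{s_\xi}S_\xi(u))S_\xi(v)\,e_\xi\,d\xi,
\end{align*}
which is structurally the same as $P(u)(x)$ in \eqref{eq:p u}, except the scalar integrand $F_1'(\sqrt{s_\xi}S_\xi(u))/\sqrt{s_\xi}$ is replaced by $F_1''(\sqrt{s_\xi}S_\xi(u))S_\xi(v)$, a bilinear expression in $(u,v)$ that is linear in $v$.

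Next I would compute $\nabla Q(v;u)$ and $\nabla^2 Q(v;u)$ by differentiating under the integral, exactly as in \eqref{eq:nabla p u} and \eqref{eq:nabla 2 p u}. The result is a finite sum of terms whose integrands are products of a bounded derivative of $F_1$ (controlled by $C_2,C_3,C_4$ from \eqref{Constants}), one difference quotient involving $v$ of the form $S_\xi(v)$, $S_\xi(\nabla v)$ or $S_\xi(\nabla^2 v)$, zero, one, or two difference quotients involving $u$ of the form $S_\xi(\nabla u)$ or $S_\xi(\nabla^2 u)$, and possibly a factor $\nabla\bar{\omega}_\xi$ or $\nabla^2\bar{\omega}_\xi$ bounded by $C_{\omega_1}$ or $C_{\omega_2}$. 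Each such term is then estimated in $L^2(D;\R^d)$ by the same recipe used for $I_1,\dots,I_6$ in Section \ref{s:proofs}: one pair of shell integrals is created with $|a||b|\le(a^2+b^2)/2$, the spatial integral over $D$ is pulled inside, and the remaining $L^2$ factor in $x$ is bounded either by $\|S_\xi(\nabla^k v)\|^2\le 4\|v\|_{k+1}^2/(\epsilon|\xi|)^2$, or by H\"older together with the Sobolev embeddings \eqref{eq:sob embedd 1}–\eqref{eq:sob embedd 2} when a genuine product appears.

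The main obstacle is the $\nabla^2 Q$ calculation, whose worst term is the cubic product $F_1''''(\sqrt{s_\xi}S_\xi(u))\,s_\xi\,|S_\xi(\nabla u)|^2 S_\xi(v)$, analogous to $I_5$ in \eqref{eq:bd 16}. To handle it one writes $|S_\xi(v)|\le 2\|v\|_\infty/(\epsilon|\xi|)\le 2C_{e_1}\|v\|_2/(\epsilon|\xi|)$ via \eqref{eq:sob embedd 1} and bounds the remaining $\int_D|S_\xi(\nabla u)|^4\,dx$ by $16C_{e_2}^4\|u\|_2^4/(\epsilon|\xi|)^4$ via \eqref{eq:sob embedd 2}, which together produce precisely the $\epsilon^{-3}\|u\|_2^2\|v\|_2$ scaling. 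Similar care handles the cross terms $F_1'''\sqrt{s_\xi}S_\xi(\nabla u)S_\xi(\nabla v)$ (yielding $\epsilon^{-3}\|u\|_2\|v\|_2$) and $F_1'''\sqrt{s_\xi}S_\xi(\nabla^2 u)S_\xi(v)$ (again $\epsilon^{-3}\|u\|_2\|v\|_2$), while the purely linear term $F_1''S_\xi(\nabla^2 v)$ contributes $\epsilon^{-2}\|v\|_2$ and the $\nabla\bar\omega_\xi$, $\nabla^2\bar\omega_\xi$ pieces are no worse.

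Summing the $L^2$ bounds on $Q$, $\nabla Q$, and $\nabla^2 Q$ and collecting structure constants produces a single $L_6$ depending only on $C_2,C_3,C_4,C_{e_1},C_{e_2},C_{\omega_1},C_{\omega_2}$ and the moments $\bar{J}_1,\bar{J}_{3/2},\bar{J}_2$, and gives \eqref{eq:bound on Q in H2}. Applying this to $v-w$ in place of $v$ and invoking linearity yields \eqref{eq:lip of Q in H2}. No new analytical ideas beyond those already used for Theorem \ref{thm:lip in H2} are needed; the only real work is the combinatorial bookkeeping of the many $\nabla^2 Q$ contributions.
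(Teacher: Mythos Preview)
Your proposal is correct and follows essentially the same route as the paper: write $Q$ in the rescaled form, differentiate under the integral to produce the analogs of $g_i$ and $h_i$, and estimate each piece in $L^2$ using the same devices (the $|a||b|\le(a^2+b^2)/2$ trick, the Sobolev embeddings \eqref{eq:sob embedd 1}--\eqref{eq:sob embedd 2}, and the moments $\bar J_\alpha$). The only cosmetic difference is that you invoke linearity of $Q(\cdot;u)$ at the outset to reduce \eqref{eq:lip of Q in H2} to \eqref{eq:bound on Q in H2}, whereas the paper works directly with $Q(v;u)-Q(w;u)$; since $S_\xi(v)-S_\xi(w)=S_\xi(v-w)$, the two computations are identical.
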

We postpone the proof of Theorem \ref{thm:Q lip in H2} and provide it in the following subsection.
From the theorem, we see that if $u$ is a solution of (\ref{eq:per equation}) and $u \in C^2(I_0;W)$ then we have for all $t \in I_0$ the inequality
\begin{align}\label{eq:bound on Q in H2 time}
||Q(v;u(t))||_2 &\leq \dfrac{L_6(1 + \sup_{s \in I_0} ||u(s)||_2 + \sup_{s\in I_0} ||u(s)||^2_2)}{\epsilon^3} ||v||_2.
\end{align}
Next we remark that the Lipschitz continuity of $y'(t)$ stated in Theorem \ref{thm:existence over finite time domain} implies $\lim_{t\rightarrow 0^{\pm}}\partial^2_{tt} u(t,x)=\partial^2_{tt}u(0,x)$.
We now show that $v(t,x)=\partial_t u(t,x)$ is the unique solution of the following initial boundary value problem.

\begin{theorem}\label{thm:soln of v equation} 
\textbf{Initial value problem for $v(t,x)$}\\
Suppose the initial data and righthand side $b(t)$ satisfy the hypothesis of Theorem \ref{thm:existence over finite time domain} and we suppose further that  $\dot{b}(t)$ exists and is continuous in time for $t\in I_0$ and $\sup_{t\in I_0} ||\dot{b}(t)||_2 < \infty$. Then $v(t,x)$ is the unique solution to the initial value problem $v(0,x)=v_0(x)$,  $\partial_t v(0,x)=\partial^2_{tt}u(0,x)$,
\begin{align}\label{eq:per equation vv}
\rho \partial^2_{tt} v(t,x) &= Q(v(t); u(t))(x) + \dot{b}(t,x), \hbox{  $t\in I_0$},
\end{align}
$v \in C^2(I_0; W)$ and 
\begin{align}\label{eq:per equation vsubtt estt}
|| \partial^3_{tt} v(t,x)||_2 &\leq ||Q(v(t); u(t))(x)||_2 + ||\dot{b}(t,x)||_2.
\end{align}
\end{theorem}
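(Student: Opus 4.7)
The plan is to treat \eqref{eq:per equation vv} as a linear second order evolution equation in $W$ with nonhomogeneous forcing, set up its first order reduction in $X = W\times W$, and apply the linear analogue of Theorem \ref{thm:existence over finite time domain} using the Lipschitz bound for $Q(\cdot;u)$ given in Theorem \ref{thm:Q lip in H2}. The first step is to verify that $v(t,x)=\partial_t u(t,x)$ solves \eqref{eq:per equation vv}: formally differentiate \eqref{eq:per equation} in $t$, pass the time derivative inside the integral defining $-\nabla PD^\epsilon(u(t))(x)$, and use the chain rule $\partial_t [\partial_S W^\epsilon(S(u(t)),y-x)] = \partial^2_S W^\epsilon(S(u(t)),y-x) S(\dot u(t))$, which produces exactly $Q(\dot u(t); u(t))$ by the definition \eqref{eq:def Q}. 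Justifying the interchange of $\partial_t$ and $\int_{H_\epsilon(x)}$ uses the uniform bound on $\partial^2_S W^\epsilon$ (coming from the constant $C_2$ in \eqref{Constants}) together with the Lipschitz continuity of $u(t)$ and $\dot u(t)$ in $W$ provided by Theorem \ref{thm:existence over finite time domain}; this is the main technical obstacle, and dominated convergence does the job once these bounds are in place. The initial conditions follow by evaluation: $v(0)=\dot u(0)=v_0$ and $\partial_t v(0)=\ddot u(0)=-\nabla PD^\epsilon(u_0)+b(0)$.

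Having established that $v=\dot u$ solves the equation, I turn to existence and uniqueness for \eqref{eq:per equation vv}. Reduce to a first order system for $z=(z^1,z^2)=(v,\dot v)^T\in X$:
\begin{align*}
\dot z^1 = z^2, \qquad \dot z^2 = Q(z^1;u(t)) + \dot b(t).
\end{align*}
Because $Q(\cdot;u(t))$ is linear in its first slot and, by Theorem \ref{thm:Q lip in H2}, satisfies $\|Q(v;u(t))\|_2 \leq K(t)\|v\|_2$ with $K(t)=L_6(1+\|u(t)\|_2+\|u(t)\|_2^2)/\epsilon^3$, and since $u\in C^1(I_0;W)$ makes $\sup_{t\in I_0}K(t)<\infty$, the right hand side is globally Lipschitz in $z$ uniformly on $I_0$. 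The continuity of $t\mapsto u(t)$ in $W$ and of $t\mapsto \dot b(t)$ in $W$ further makes the right hand side continuous in $t$. A Picard iteration on $X$ over $I_0$ — identical in structure to the proof of Theorem \ref{thm:local existence} but easier because of linearity, so the contraction is global and no shrinking of time interval is required — yields a unique $z\in C^1(I_0;X)$. Since $v=\dot u$ satisfies the same initial value problem, uniqueness identifies the two, establishing that $v\in C^1(I_0;W)$ with $\dot v\in C^1(I_0;W)$, i.e. $v\in C^2(I_0;W)$.

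Finally, the pointwise in time bound \eqref{eq:per equation vsubtt estt} is read directly off the equation: setting $\rho=1$ in \eqref{eq:per equation vv} and applying the triangle inequality in $W$ gives
\begin{align*}
\|\partial^2_{tt}v(t)\|_2 \leq \|Q(v(t);u(t))\|_2 + \|\dot b(t)\|_2.
\end{align*}
Combining this with \eqref{eq:bound on Q in H2 time} applied to $v=\dot u$ immediately reproduces the estimate \eqref{eq:per equation vsubtt est} of Theorem \ref{thm:higher regularity}, so the proof of the companion higher regularity statement for $u$ will follow at once from this theorem. The one place where care is required remains Step 1, the differentiation under the integral sign; everything else is a clean application of linear ODE theory on the Banach space $W$ together with the already established estimates.
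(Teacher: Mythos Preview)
Your proposal is correct and follows essentially the same route as the paper. The paper's own proof is a single sentence: it invokes the Lipschitz continuity \eqref{eq:bound on Q in H2 time} together with the Banach fixed point theorem as in \cite{MA-Brezis}. You have simply filled in the details the paper omits---the first-order reduction on $X=W\times W$, the global Lipschitz bound coming from linearity of $Q(\cdot;u(t))$ and boundedness of $t\mapsto\|u(t)\|_2$, the identification of $\dot u$ with the unique fixed point, and the triangle inequality for \eqref{eq:per equation vsubtt estt}. One minor remark: your claim that ``no shrinking of time interval is required'' because of linearity is slightly loose---the Picard map on all of $I_0$ need not be a contraction in the unweighted $C(I_0;X)$ norm---but the conclusion is of course correct, since global Lipschitz continuity allows the local solution to be continued over all of $I_0$ (or one uses a weighted norm), exactly as in the paper's reference to \cite{MA-Brezis}.
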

Theorem \ref{thm:higher regularity} now follows immediately from Theorem \ref{thm:soln of v equation} noting that $\partial_t u(t,x)=v(t,x)$
together with \eqref {eq:bound on Q in H2 time} and \eqref{eq:per equation vsubtt estt} .
The proof of Theorem \ref{thm:soln of v equation}  follows from the Lipschitz continuity (\ref{eq:bound on Q in H2 time}) and the Banach fixed point theorem as in \cite{MA-Brezis}. 
\subsection{Lipschitz continuity of $Q(v;u)$ in $ H^2\cap H^1_0$}
We conclude by explicitly establishing the Lipschitz continuity of $Q(v;u)$.
Recall that $Q(v; u)$ is given by 
\begin{align*}
Q(v;u)(x) &= \dfrac{2}{\epsilon^d \omega_d} \int_{H_{\epsilon}(x)} \partial^2_S W^{\epsilon}(S(y,x;u),y - x) S(y,x;v) \dfrac{y - x}{|y - x|} dy.
\end{align*}
From expression of $W^\epsilon$ in (\ref{eq:per pot}) and using the notation $F_1(r) = f(r^2)$ we have
\begin{align*}
\partial^2_S W^\epsilon(S, y-x) &= \partial^2_S \left( \omega(x) \omega(y) \frac{J^\epsilon(|y-x|)}{\epsilon |y-x|} F_1(\sqrt{|y-x|} S) \right) \notag \\
&= \omega(x) \omega(y) \frac{J^\epsilon(|y-x|)}{\epsilon} F''_1(\sqrt{|y-x|} S).
\end{align*}
Substituting above, using the change of variable $y = x+ \epsilon\xi, \xi \in H_1(0)$ and using the notation of previous subsections, we get
\begin{align}\label{eq:Q simple}
Q(v;u)(x) &= \dfrac{2}{\epsilon \omega_d} \int_{H_1(0)} \bar{\omega}_\xi(x) J(|\xi|) F''_1(\sqrt{s_\xi} S_\xi(u)) S_\xi(v) e_\xi d\xi.
\end{align}

We study $||Q(v;u) - Q(w;u)||_2$ where $u\in W$ is a given field and $v,w$ are any two fields in $W$. Following the same steps used in the estimation of $||P(u) - P(v)||$ together with the bounds on the derivatives of $F_1$, a straight forward calculation shows that
\begin{align}\label{eq:bd Q 1}
||Q(v;u) - Q(w;u)|| \leq \frac{L_1}{\epsilon^2}||v-w|| \leq \frac{L_1}{\epsilon^2}||v-w||_2,
\end{align}
where $L_1 = 4 C_2 \bar{J}_1$. 

We now examine $||\nabla Q(v;u) - \nabla Q(w;u)||$. Taking gradient of (\ref{eq:Q simple}) we get
\begin{align}\label{eq:grad Q}
\nabla Q(v;u)(x) &= \dfrac{2}{\epsilon \omega_d} \int_{H_1(0)} \bar{\omega}_\xi(x) J(|\xi|) F''_1(\sqrt{s_\xi} S_\xi(u)) e_\xi \otimes \nabla S_\xi(v) d\xi \notag \\
&+ \dfrac{2}{\epsilon \omega_d} \int_{H_1(0)} J(|\xi|) F''_1(\sqrt{s_\xi} S_\xi(u)) S_\xi(v) e_\xi \otimes \nabla \bar{\omega}_\xi(x) d\xi \notag \\
&+ \dfrac{2}{\epsilon \omega_d} \int_{H_1(0)} \bar{\omega}_\xi(x) J(|\xi|) \sqrt{s_\xi} F'''_1(\sqrt{s_\xi} S_\xi(u)) S_\xi(v) e_\xi \otimes \nabla S_\xi(u) d\xi \notag \\
&=: G_1(v;u)(x) + G_2(v;u)(x) + G_3(v;u)(x).
\end{align}
It is straight forward to show that
\begin{align*}
||G_1(v;u) - G_1(w;u)|| &\leq  \frac{L_1}{\epsilon^2} ||\nabla v- \nabla w|| \leq \frac{L_1}{\epsilon^2} ||v - w||_2 \notag \\
||G_2(v;u) - G_2(w;u)|| &\leq  \frac{L_1 C_{\omega_1}}{\epsilon^2} || v- w|| \leq \frac{L_1 C_{\omega_1}}{\epsilon^2} || v- w||_2.
\end{align*}
Applying the inequalities $|S_\xi(v) - S_\xi(w)| \leq 2||v-w||_\infty /(\epsilon |\xi|) \leq 2 C_{e_1} ||v-w||_2 /(\epsilon|\xi|)$ and $|F'''_1(r)| \leq C_3$, we have
\begin{align*}
|G_3(v;u)(x) - G_3(w;u)(x)| &\leq \frac{4C_3 C_{e_1}||v-w||_2}{\epsilon^{5/2}} \frac{1}{\omega_d}\int_{H_1(0)} \frac{J(|\xi|)}{|\xi|^{3/2}} \epsilon |\xi| |\nabla S_\xi(u)| d\xi.
\end{align*}
Using the estimates above one has
\begin{align*}
||G_3(v;u) - G_3(w;u)|| &\leq \frac{8C_3 C_{e_1} \bar{J}_{3/2}}{\epsilon^{5/2}} ||u||_2 ||v-w||_2 = \frac{L_2}{\epsilon^{5/2}} ||u||_2 ||v-w||_2,
\end{align*}
where $L_2 = 8 C_3 C_{e_1}\bar{J}_{3/2}$. On collecting results, we have shown
\begin{align}\label{eq:bd Q 2}
||\nabla Q(v;u) - \nabla Q(w;u)|| &\leq \frac{\sqrt{\epsilon} L_1 (1+C_{\omega_1}) + L_2 ||u||_2}{\epsilon^{5/2}} ||v-w||_2.
\end{align}

Next we take the gradient of (\ref{eq:grad Q}), and write
\begin{align}\label{eq:grad2 Q}
\nabla^2 Q(v;u)(x) &= \nabla G_1(v;u)(x) + \nabla G_2(v;u)(x) + \nabla G_3(v;u)(x). 
\end{align}
Following the steps used in previous subsection, we estimate each term in  (\ref{eq:grad2 Q}) to obtain the following estimate given by
\begin{align}\label{eq:bd Q 3}
||\nabla^2 Q(v;u) - \nabla^2 Q(w;u)|| &\leq \frac{L_5(1 + ||u||_2 + ||u||_2^2)}{\epsilon^3}||v-w||_2. 
\end{align}
The proof of Theorem \ref{thm:Q lip in H2} is completed on summing up (\ref{eq:bd Q 1}), (\ref{eq:bd Q 2}), (\ref{eq:bd Q 3}) under the hypothesis $\epsilon<1$.

\section{Discussion of bounds for the rate of convergence}\label{s:discussion}

In this section we illustrate the time scales involved in a fracture simulation and provide an example where the error given by \eqref{eq:err estimate 1} can be controlled with acceptable computational complexity. We consider a  one meter cube with horizon $\epsilon =1/10$. Let the shear wave speed be given by  $\nu = 1400 \; \text{meter}/\text{second}$. This wave speed is characteristic of Plexiglas. The characteristic time $T^*$ is the time for a shear wave to travel across the cube and is $718 \; \mu$-seconds. Let $T$ be the non-dimensional simulation time and the physical time of the simulation is $TT^* \;\mu$-seconds. From Theorem \ref{thm:convergence} we have 
\begin{align*}
\sup_{1\leq k \leq T/\Delta t } E^k \leq C_p h^2 + \exp[T(1+L)/\epsilon^2] \left( e^0 + C_t T\Delta t + C_s T h^2/\epsilon^2 \right)\frac{1}{(1-\Delta t)^2}
\end{align*}
where
\begin{align*}
L_1 &= 4 (\sup_r |F_1''(r)|) \frac{1}{\omega_d} \int_{H_1(0)} J(|\xi|)/|\xi| d\xi, \\
C_p &= c \sup_t ||u(t)||_2 + c \sup_t ||v||_2, \\
C_t &= \sup_t ||\frac{\partial^2 u}{\partial t^2}|| + \sup_t ||\frac{\partial^2 v}{\partial t^2}||, \\
C_s &= L_1 c \sup_t ||u||_2.
\end{align*}
Ignoring the error in initial data $e^0$, and the $C_p$ term, we set $\bar{L} = 1 + L_1 = 1 + 4$ and we get
\begin{align}\label{eq:err estimate 1}
\sup_{1\leq k \leq T/\Delta t } E^k \leq \exp[5T/\epsilon^2] \left( C_t T\Delta t + (5T/\epsilon^2) h^2 \sup_t ||u||_2  \right)\frac{1}{(1-\Delta t)^2}.
\end{align}
One observes that the primary source of error in  \eqref{eq:err estimate 1} is the second term
\begin{align}\label{eq:err estimate 2}
 \exp[5T/\epsilon^2]  (5T/\epsilon^2) h^2 \sup_t ||u||_2 .
\end{align}
We choose $T$ so that $8=5T/\epsilon^2$ and $(5T/\epsilon^2)\exp[5T/\epsilon^2]=23,850$. Hence $T=0.016$ and the physical simulation time is $11.48\mu$ sec. If we choose $h=0.00142$ calculation shows the relative error associated with this term is $0.05$ with $34$ million spatial degrees of freedom. Here degrees of freedom are given by $\epsilon^{-3}\times h^{-3}$. The solution after cycle time T can be used as the initial conditions for a subsequent run and the process can be iterated. These estimates predict a total physical simulation time of $114.8\mu$-second before the factor multiplying $\sup_t ||u||_2$ in \eqref{eq:err estimate 2} becomes greater than $1/10$. This time span is of the same order as physical experiments.
As this calculation is based upon a-priori error estimates it is naturally pessimistic.

\section{Conclusions}\label{s:concl}
We have considered a canonical nonlinear peridynamic model and have shown the existence of a unique $H^2(D;\R^d) \cap H^1_0(D;\R^d)$ solution for any finite time interval. 
It has been demonstrated that finite element approximation converges to the exact solution at the rate $C_t\Delta t + C_s\frac{h^2}{\epsilon^2}$ for fixed $\epsilon$. The constants $C_t$ and $C_s$ are independent of time step $\Delta t$ and mesh size $h$. The constant $C_t$ depends on the $L^2$ norm of the first and second time derivatives of the solution. The constant $C_s$ depends on the $H^2$ norm of the solution. A  stability condition for the length of time step has been obtained for the linearized peridynamic model. It is expected that this stability condition is also in force for the nonlinear model in regions where the deformation is sufficiently small.

We remark that there is a large amount of work regarding asymptotically compatible schemes, in which one can pass to the  limit $\epsilon \to 0$ and retain convergence of the numerical method, see \citet{CMPer-Tian3,ChenBakenhusBobaru}. Such a scheme may be contemplated only when the convergence rate to the solution of the limit problem with respect to $\epsilon$ is known. 
Unfortunately an asymptotically compatible scheme is not yet possible for the nonlinear nonlocal evolutions treated here because the convergence rate of solutions with respect to $\epsilon>0$ is not known. One fundamental barrier to obtaining a rate is that the complete characterization of the $\epsilon=0$ evolution is not yet known. What is known so far is the characterization developed in the earlier work \cite{CMPer-Lipton3, CMPer-Lipton}. Here the evolution $u^{\epsilon}$ for the nonlinear nonlocal model is shown, on passage to subsequences, to converge in the $C([0,T];L^2(D;\R^3))$ norm as $\epsilon \rightarrow 0$ to a limit evolution $u(t)$ the space of $SBD^2$ functions. The fracture set at time $t$ is given by the jump set $J_{u(t)}$ of $u(t)$.  ($J_{u(t)}$ is the countable union of components contained in smooth manifolds and has finite Hausdorff $d-1$ measure.) At each time the associated energies $PD^\epsilon(u^{\epsilon})$ $\Gamma$-converge to the energy of linear elastic fracture mechanics evaluated at the limit evolution $u(t)$. This energy is found to be bounded. Away from the fracture set the limit evolution has been shown to evolve according to the linear elastic wave equation \cite{CMPer-Lipton3, CMPer-Lipton}.  What remains missing is the dynamics of the fracture set $J_{u(t)}$. Once this is known a convergence rate may be sought and an asymptoticly compatible scheme may be  contemplated. 

As shown in this paper the nonlinear nonlocal model is well posed in $H^2$ for all  $\epsilon >0$. However the $H^2$ norm of the solution gets progressively larger as $\epsilon\rightarrow 0$ if gradients steepen due to forces acting on the body. 
On the other hand if it is known that the solution is bounded in a $C^p$ norm uniformly for $\epsilon >0$ and if $p$ large enough then one can devise a finite difference scheme with truncation error that goes to zero independent of the peridynamic horizon [Proposition 5, \cite{CMPer-JhaLipton2}]. For example if $p\geq 4$  then the peridynamic evolutions converge to the elastodynamics evolution independently of horizon and an asymptotically compatible scheme can be developed for the linearized peridynamic force, [Proposition 5, \cite{CMPer-JhaLipton2}]. We note that the nonlinear and linearized kernels treated here and in  \cite{CMPer-JhaLipton2} are  different than those treated in  \cite{CMPer-Tian3} where asymptotically compatible schemes are first proposed. 

Last we believe that it is possible to show that the time stepping methods developed here provide approximation in the $H^1$ norm with a spatial convergence rate of $h/\epsilon^{2+\alpha}$, where $\alpha>0$ is to be determined. The calculation is anticipated to be tedious but we believe that the convergence rate can be established using the techniques established in this paper.

%
%
%



\end{document}